\numberwithin{equation}{section}
\newtheorem{theorem}{Theorem}[section]
\newtheorem{lemma}[theorem]{Lemma}
\newtheorem{corollary}[theorem]{Corollary}
\newtheorem{remark}[theorem]{Remark}
\newtheorem{proposition}[theorem]{Proposition}
\newtheorem{definition}[theorem]{Definition}
\newtheorem{assumption}[theorem]{Assumption}
\renewcommand{\d}{\mathrm{d}}
\newcommand{\Id}{\,\mathrm{d}}
\renewcommand{\epsilon}{\varepsilon}
\newcommand{\R}{\mathbb{R}}
\newcommand{\N}{\mathbb{N}}
\renewcommand{\P}{\mathbb{P}}
\newcommand{\E}{\mathbb{E}}
\newcommand{\indicator}[1]{\mathbbm{1}_{#1}}
\newcommand{\norm}[1]{\left\lVert#1\right\rVert}
\newcommand{\qv}[1]{\langle #1 \rangle}
\newcommand{\testfunctions}[1]{C_c^{\infty}(#1)}
\newcommand{\cF}{\mathcal{F}}
\title[Mean-field limit and quantitative relative entropy estimates]{Quantitative relative entropy estimates on the whole space for convolution interaction forces}
\author[Nikolaev]{Paul Nikolaev}
\address{Paul Nikolaev, University of Mannheim, Germany}
\email{pnikolae@mail.uni-mannheim.de}
\author[Pr{\"o}mel]{David J. Pr{\"o}mel}
\address{David J. Pr{\"o}mel, University of Mannheim, Germany}
\email{proemel@uni-mannheim.de}
\date{\today}
\begin{document}

\begin{abstract}
  Quantitative estimates are derived, on the whole space, for the relative entropy between the joint law of random interacting particles and the tensorized law at the limiting systeme. The developed method combines the relative entropy method under the moderated interaction scaling introduced by Oeschl{\"a}ger, and the propagation of chaos in probability. The result includes the case that the interaction force does not need to be a potential field. Furthermore, if the interaction force is a potential field, with a convolutional structure, then the developed estimate also provides the modulated energy estimates. Moreover, we demonstrate propagation of chaos for large stochastic systems of interacting particles and discuss possible applications to various interacting particle systems, including the Coulomb interaction case.
\end{abstract}

\maketitle

\noindent \textbf{Key words:} diffusion-aggregation equation, interacting particle systems, McKean--Vlasov equations, non-linear non-local PDE, propagation of chaos, relative entropy, modulated energy.

\noindent \textbf{MSC 2010 Classification:} 35D30, 35Q70, 60K35.

\section{Introduction}

In this article we study \(N\)-particle systems \(\mathbf{X}^N= (X^1,\ldots, X^N)\) given by stochastic differential equations (SDEs) of the form
\begin{equation*}
  \Id X_t^{i} = -\frac{1}{N} \sum\limits_{j=1}^N k(X_t^{i} -X_t^{j}) \Id t + \sigma \Id B_t^{i}, \quad i=1,\ldots,N , \; \; \mathbf{X}_0^N \sim \overset{N}{\underset{i=1}{\otimes}} \rho_0,
\end{equation*}
starting from i.i.d. initial data \(\rho_0\). Such interacting systems arise naturally in various areas of science and engineering, including physics, chemistry, biology, ecology, and social sciences. For instance, they represent the behavior of ion channels, chemotaxis~\cite{KellerSegel1970,HillenPainter2009,Horstmann2004}, angiogenesis on the microscopic level and swarm movement~\cite{Topaz2006}, flocking~\cite{HaLiu2009}, opinion dynamics~\cite{LORENZ_2007_bounded_confidence_survey, noorazar2020recent}, cancer invasion~\cite{Domschke2014} on the macroscopic level. The macroscopic level is often described through the evolution of the density of particles/individuals~\(\rho\) known to satisfy an aggregation-diffusion equation, which in general is a non-local, non-linear partial differential equation (PDE). Transitioning from microscopic models to continuum descriptions, i.e. \(N\) approaches infinity, entails to explore the mean-field limit, see e.g.~\cite{snitzman_propagation_of_chaos,CarilloChoiPil2014,JabinWangZhenfu2016,JabinEmanuel2014}. It consists of demonstrating the convergence of the empirical measure \(\mu_t^N \) for all \(t \ge 0\), where \(\mu_t^N\) is defined as
\begin{equation*}
  \omega \mapsto \mu^N_t(\omega,A) := \frac{1}{N} \sum\limits_{i=1}^N \delta_{X^{i}_{t}(\omega)}  (A), \quad \quad A \in \mathcal{B}(\R). 
\end{equation*}
Various topologies are considered for the convergence, such as weak convergence, convergence in Wasserstein distance, convergence in terms of the Boltzmann entropy, and convergence in terms of the Fisher information. A comprehensive analysis can be found in~\cite{HaurayMischler2014}. In the present article we will focus on the convergence in entropy.

\smallskip
\noindent\textbf{Main contribution:} We present a novel method to derive propagation of chaos in entropy on the whole space for both non-conservative field and potential field  possessing a convolution structure. Inspired by Oelschl{\"a}ger~\cite{Oeschlager1987}, the presented method is based on the crucial observation that, under the convolution structure, the expectation of mollified $L^2$ norm and the modulated energy (also as a weighted \(L^2\)-norm) can be estimated using the dynamics of the underlying systems in conjunction with the propagation of chaos in probability, as demonstrated in~\cite{lazarovici2017mean,HuangLiuPickl2020,Fetecau_2019,Nikolaev2023}. The key contribution of the present work lies in the technique of combining propagation of chaos in probability~\cite{lazarovici2017mean,HuangHiuLius2019,LiuYang2019,Fetecau_2019,HaKimPickl2019,CarrilloChoiSalem2019,HuangLiuPickl2020,ChenLiPickl2020,Nikolaev2023} with the underlying entropy structure from~\cite{JabinWangZhenfu2016,Serfaty_2020,ChenHolzingerHuo2023} and the fluctuation estimates in~\cite{Oeschlager1987}. Consequently, we prove that convergence in probability for an interaction kernel, which is obtained by some type of mollification technique, implies convergence in relative entropy for an algebraic cut-off \(N^{-\beta}\). This demonstrates that convergence in probability is actually a quite strong convergence result.

We emphasize that the main quantitative estimate, Theorem~\ref{theorem: emp_measure_l2_estimate}, is presented in a general manner and can easily be extended to a multi-dimensional setting, allowing its application to a wide range of kernels. We refer to Remark~\ref{remark: general_kernels_l2main_thm} for more details and to Section~\ref{sec: application} for some interesting examples from the fields of chemotaxis and opinion dynamics. In particular, the method can be further applied in handling the attractive and repulsive Coulumb interaction potential in dimension \(d \ge 2\), which includes the Keller--Segel model. Finally, we derive an estimate on the supremum norm in time of the relative entropy between the law of the approximated particle system and the chaotic law of the approximated mean-field SDE system of rate greater than \(1/2\). Moreover, the approximation is of algebraic order, which is sharper than the logarithmic cut-off derived from the standard coupling method~\cite{snitzman_propagation_of_chaos,LiuYang2019}.

Theorem~\ref{theorem: emp_measure_l2_estimate} can be considered as an intermediate result on the approximated level. On the one hand, the remaining limit of the regularized mean-field equation to the mean-field equation reduces to a question regarding the convergence on the PDE level. On the other hand, the convergence of the regularized particle system to the particles system is a question about the stability of solutions to the stochastic differential equations. For bounded interaction kernels, we also provide both convergences in the \(L^1\)-norm. Consequently, we prove the \(L^1\)-convergence of the \(m\)-th marginal of the Liouville equation to the \(m\)-th chaotic law of the non-linear diffusion-aggregation equation. This final convergence result is only presented for bounded kernels since, in general, the existence for the linear Liouville equation~\eqref{eq: Liouville_equation} on \(\R^N\) is not given, see~\cite[Proposition~4.2]{BreschDidierJabinWangZhenfu2019} for the torus setting.

\smallskip
\noindent\textbf{Related literature:} The study of propagation of chaos for a globally Lipschitz continuous interaction force \(k\) has already a fairly long history, see e.g.~\cite{McKean1967,snitzman_propagation_of_chaos,HaurayMischler2014}. One of the first idea was to utilize the coupling method, i.e. comparing \((X_t^{i},t \ge 0)\) with their associated McKean--Vlasov SDEs.

Motivated by models, particularly from physics, with bounded measurable or even singular interaction force kernels, extensive efforts have been devoted to investigated propagation of chaos for particle systems with such kernels. Initially, approaches to treat such irregular kernels were often based on compactness methods in combination with the martingale problems associated to the McKean--Vlasov SDEs, see e.g. \cite{Oeschlager1984,Hirofumi1987,Gartner1988,FournierJourdain2017,godinho_Keller_Segel2015,LiLeiLiu2019,LiLeiLiu2019,olivera2020quantitative}. For general \(L^p\)-interaction force kernels~\(k\), the propagation of chaos was demonstrated for first and second order systems on the torus~\cite{bresch2023} and on the whole space~\(\R^d\) \cite{hao2022,han2023,Lacker2023}. Another approach, initiated by Lazarovici and Pickl for the {V}lasov--{P}oisson system~\cite{lazarovici2017mean}, allows to deduce propagation of chaos in probability. This method is well-suited for singular interaction kernels, even when the underlying systems may not be well-defined. 

For the moderate interacting system in deriving porous medium equation, Oeschl\"ager has actually a series of contributions many decades ago, for example in \cite{Oeschlager1984,Oeschlager1987}. Especially, for the fluctuation analysis, a smoothed $L^2$ estimate with  convergence rate $o(N^{-1/2})$ has been obtained. The convolution structure of the moderate interaction played important roles. In the estimates proposed in \cite{Oeschlager1987}, the repulsive moderate interaction  provides an essential quantity to absorb the rests from interacting effect. Recently, \cite{holzingerDr23} obtained Oeschl\"ager's $L^2$ estimate for the moderate interacting system with attractive potential, under the assumption that the convergence of probability for the moderating interacting particle system holds, which is still an open problem. Recently \cite{ChenHolzingerHuo2023}, derived also a connection between the relative entropy and the regularized $L^2$-norm in the moderate interaction framework by directly citing the estimate from \cite{Oeschlager1987}. The novelty of our work is that we do not follow the framework provided by \cite{Oeschlager1987}, but generate a direct estimation method in a general framework.

Another way to treat singular kernels such as the Coulomb potential \(x/|x|^s\) for \( s \ge 0 \) was investigated in the deterministic setting~\cite{Serfaty_2020,Nguyen2022} (\(\sigma = 0)\) as well as in the random setting~\cite{JabinWang2018,BreschDidierJabinWangZhenfu2019,bresch2020,RosenzweigSerfaty2023} (\(\sigma > 0 \)). The aforementioned references introduced the modulated free energy, which is a practical quantity suited for the Coulomb case. In particular, it metrize the weak convergence of the empirical measures~\cite{RosenzweigSerfaty2023}. A drawback of the modulated free energy approach in combination with the relative entropy is the torus domain as well as the requirement of entropy solutions on the particle level (microscopic level), see~\cite[Proposition~4.2]{BreschDidierJabinWangZhenfu2019}, which is non-trivial outside a setting on the torus. Furthermore, in order to apply the large deviation result in~\cite{JabinWang2018}, strict conditions are required on the initial data and the solution of the Fokker--Planck equation. Recently, Wang and Feng extended these results to the \(2D\)-viscous point vortex model on the whole space \(\R^2\). The idea is to show exponential decay of the solution~\cite[Theorem~4.4]{Feng2023} to be able to apply the large deviation result in~\cite{JabinWang2018}. Again strict restrictions on the initial conditions such as exponential decay of the initial data are necessary.

In the present article we manage to avoid the large deviation principle~\cite{JabinWang2018} and the strict conditions on the initial data by utilizing the convergence in probability, see~\eqref{eq: convergence_in_probability} below. We also can treat general forces such as rotational fields or magnetic fields in physics. We also manage to derive quantitative bounds on singular forces such as attractive Coulomb interaction kernels on the whole space, which to our knowledge require approximation techniques by the nature of their singularities on the level of the Lioville equation. The price we pay lies in the obtained convergence rate. While~\cite{JabinWang2018} can establish the convergence in the sense of the Boltzmann entropy on the level of the associated Fokker--Planck equations with an order of \(N^{-1}\), we achieve a rate of \(N^{-1/2-\vartheta}\) for some \(\vartheta >0\). Nevertheless, the convergence is faster than \(1/2\) and, therefore, we are optimistic that this result can be used as a stepping stone for Gaussian fluctuation. 

\smallskip
\noindent\textbf{Organization of the paper:} In Section~\ref{sec: preliminaries} we introduce the notation, the interacting particle systems and their associated diffusion-aggregation equations, give the necessary assumptions, and list the main results of this paper. We present the main ideas and the main estimate (Theorem~\ref{theorem: emp_measure_l2_estimate}) in Section~\ref{sec: relative_entropy_method}. In Section~\ref{sec: PDE convergence}, we demonstrate the propagation of chaos in the case of bounded interaction forces for the non-regularized systems by establishing the convergence of the approximated PDEs to the non-approximated counterparts. In Section~\ref{sec: application}, we showcase the applicability of the developed method by discussing, e.g., the regularized, singular Keller--Segel models and bounded confidence models.

\smallskip
\noindent\textbf{Acknowledgments:} P. Nikolaev and D. J. Pr\"omel would like to deeply thank L. Chen for fruitful discussions and suggestions leading to a significant improvement of the present work.

\section{Problem setting, preliminaries and main results}\label{sec: preliminaries}

In this section we introduce the basic setting, the interacting particle systems, their associated partial differential equations, some preliminary results, as well we the main results of this article.

\subsection{Particle systems}\label{subsec: particle system}

In this subsection we introduce the probabilistic setting, in particular, for the \(N\)-particle system and the associated McKean--Vlasov equation. To that end, let \((\Omega, \mathcal{F}, ( \mathcal{F}_t)_{t \ge 0 } , \P)\) be a complete probability space with right-continuous filtration \((\mathcal{F}_t)_{t \ge 0 } \) and \((B_t^{i}, t\ge 0)\), \( i=1, \ldots, N\), be independent one-dimensional Brownian motions with respect to \((\cF_t, t \ge 0)\). In the following, we use the notation $X\sim \rho$ to represent that $\rho$ is the law of random variable $X$.

The \(N\)-particle system \(\mathbf{X}_t^N:= (X_t^{1}, \ldots,X_t^{N})\) is given by
\begin{equation}\label{eq: particle_system}
  \Id X_t^{i} = -\frac{1}{N} \sum\limits_{j=1}^N k(X_t^{i} -X_t^{j}) \Id t + \sigma \Id B_t^{i}, \quad i=1,\ldots,N , \; \; \mathbf{X}_0^N \sim \overset{N}{\underset{i=1}{\otimes}} \rho_0,
\end{equation}
where \(\sigma >0\) is the diffusion parameter and \( \mathbf{X}_0^N\) is independent of the Brownian motions \((B_t^{i}, t\ge 0)\), \( i=1, \ldots, N\). The particle system~\eqref{eq: particle_system} induces in the limiting case \(N\to \infty\) the following i.i.d. sequence \(\mathbf{Y}_t^N := (Y_t^{1}, \ldots,Y_t^{N})\) of mean-field particles
\begin{equation}\label{eq: mean_field_trajectories}
  \Id Y_t^{i} = -  (k* \rho_t)(Y_t^{i}) \Id t + \sigma \Id B_t^{i}, \quad i=1,\ldots,N , \; \; \mathbf{Y}_0^N=\mathbf{X}_0^N,
\end{equation}
where \(\rho_t:= \rho(t,\cdot)\) denotes the probability density of the i.i.d. random variable \(Y_t^{i}\).

To introduce the regularized versions of \eqref{eq: particle_system} and \eqref{eq: mean_field_trajectories}, we take the smooth approximation \((k^\epsilon, \epsilon > 0)\) of \(k\) and replace the drift term with its approximation. Hence, the regularized microscopic \(N\)-particle system \(\mathbf{X}_t^{N,\epsilon } := (X_t^{1,\epsilon}, \ldots,X_t^{N,\epsilon})\) is given by
\begin{equation}\label{eq: regularized_particle_system}
  \d X_t^{i,\epsilon} = -\frac{1}{N} \sum\limits_{j=1}^N k^\epsilon(X_t^{i,\epsilon} -X_t^{j,\epsilon}) \Id t + \sigma \Id B_t^{i}, \quad i=1,\ldots,N , \; \; \mathbf{X}_0^{N,\epsilon} \sim \overset{N}{\underset{i=1}{\otimes}} \rho_0,
\end{equation}
and the regularized mean-field trajectories \(\mathbf{Y}_t^{N,\epsilon} := (Y_t^{1,\epsilon}, \ldots,Y_t^{N,\epsilon})\) by
\begin{equation}\label{eq: regularized_mean_field_trajectories}
  \d Y_t^{i,\epsilon} = -  (k^\epsilon* \rho_t^\epsilon)(Y_t^{i,\epsilon}) \Id t + \sigma \Id B_t^{i}, \quad i=1,\ldots,N , \; \; \mathbf{Y}_0^{N,\epsilon}=\mathbf{X}_0^{N,\epsilon},
\end{equation}
where \(\rho_t^\epsilon:=\rho^\epsilon(t,\cdot)\) denotes the probability density of the i.i.d. random variable \(Y_t^{i,\epsilon}\).

Finally, let the empirical measure of the regularized interaction system be given by 
\begin{equation}\label{eq: def_emprirical_measure_reg}
  \mu_t^{N,\epsilon}(\omega) := \frac{1}{N} \sum\limits_{i=1}^N \delta_{X_t^{i,\epsilon}(\omega) } \in \mathcal{P}(\R),
\end{equation}
where \(\delta\) is the Dirac measure. 

\subsection{Associated PDEs}

It{\^o}'s formula implies that the associated probability densities of the particle systems, introduced in Subsection~\ref{subsec: particle system}, satisfy partial differential equations (PDEs). Indeed, the interacting particle system~\eqref{eq: particle_system} induces the following Liouville equation on \(\R^N\),
\begin{align}\label{eq: Liouville_equation}
  \begin{cases}
  \partial_t \rho_t^{N}(\mathrm{X}^N)  &=  \frac{\sigma^2}{2}\sum\limits_{i=1}^N  \partial_{x_ix_i} \rho_t^{N} (\mathrm{X}^N )+ \sum\limits_{i=1}^N
  \partial_{x_i} \Bigg ( \rho^{N}_t(\mathrm{X}^N) \frac{1}{N} \sum\limits_{j=1}^N  k (x_{i} -x_{j} ) \Bigg)   \\
  \rho_0^{N}(\mathrm{X}^N) &= \prod\limits_{i=1}^N \rho_0(x_i)
\end{cases}
\end{align}
for \(\mathrm{X}^N = (x_1, \ldots, x_N) \in \R^N \), the system~\eqref{eq: mean_field_trajectories} induces the non-linear aggregation-diffusion equation
\begin{align}\label{eq: aggregation_diffusion_pde}
  \begin{cases}
  \partial_t \rho_t = \frac{\sigma^2}{2} \partial_{xx}\rho + \partial_x (\rho_t k*\rho_t ) \quad & \forall (t,x) \in  [0,T)  \times \R \\
  \; \; \, \rho(x,0) = \rho_0 &\forall x \in \R
  \end{cases},
\end{align}
the regularized particle system~\eqref{eq: regularized_particle_system} the Liouville equation
\begin{align}\label{eq: regularized_Liouville_equation}
  \begin{cases}
  \partial_t \rho_t^{N,\epsilon} (\mathrm{X}^N)  =  \frac{\sigma^2}{2}\sum\limits_{i=1}^N  \partial_{x_ix_i} \rho_t^{N,\epsilon} (\mathrm{X}^N) + \sum\limits_{i=1}^N
  \partial_{x_i} \Bigg ( \rho^{N,\epsilon}_t (\mathrm{X}^N) \frac{1}{N} \sum\limits_{j=1}^N  k^{\epsilon} (x_{i} -x_{j} ) \Bigg)   \\
  \; \; \,\,\rho_0^{N,\epsilon}(\mathrm{X}^N) = \prod\limits_{i=1}^N \rho_0(x_i)
  \end{cases}
\end{align}
and the regularized system~~\eqref{eq: regularized_mean_field_trajectories} the aggregation-diffusion equation
\begin{align}\label{eq: regularized_aggregation_diffusion_pde}
\begin{cases}
\partial_t \rho_t^\epsilon = \frac{\sigma^2}{2} \partial_{xx}\rho_t^\epsilon + \partial_x (\rho_t^\epsilon k^\epsilon*\rho_t^\epsilon ) \quad & \forall (t,x) \in   [0,T) \times \R  \\
\; \; \, \rho^\epsilon(x,0) = \rho_0 &\forall x \in \R
\end{cases} . 
\end{align}

Note that we use \(\rho_t\) and \(\rho_t^\epsilon \) for the solutions of the PDEs~\eqref{eq: aggregation_diffusion_pde} and \eqref{eq: regularized_aggregation_diffusion_pde} as well as for the probability densities of the particle systems \eqref{eq: mean_field_trajectories} and \eqref{eq: regularized_mean_field_trajectories}, respectively, since these objects coincide by the superposition principle, see \cite{barburoeckner2020}, in combination with existence results of densities for considered SDEs, see~\cite{romito2018}.

Furthermore, we need to define the marginal of the system of rank \(1\leq m\leq N\), 
\begin{equation}\label{eq: def_m_martingale}
  \rho_t^{N,m} = \int_{\R^{N-m}} \rho_t^N(x_1, \ldots, x_N) \Id x_{m+1} \ldots \Id x_N.
\end{equation}
We remark that the \(m\)-th martingale solves the following Liouville equation 
\begin{align}\label{eq: first_martingale_liuville_equation}
  \begin{split}
  \partial_t \rho_t^{N,m}
  =&\;  \frac{\sigma^2}{2} \sum\limits_{i=1}^m  \int_{\R^{N-m}}  \partial_{x_ix_i} \rho_t^N(x_1,\ldots,x_N) \\
  &+ \partial_{x_i} \Bigg ( \rho^N_t(x_1,\ldots,x_N) \frac{1}{N} \sum\limits_{j =1 }^N  k (x_{i} -x_{j} ) \Bigg) \Id x_{m+1} \ldots \Id x_N  .
  \end{split}
\end{align}
Similar to~\eqref{eq: def_m_martingale} we denote by \(\rho_t^{N,m,\epsilon}\) the m-th marginal of the approximated Lioville equation, i.e. 
\begin{equation*}
  \rho_t^{N,m,\epsilon} := \int_{\R^{N-m}} \rho_t^{N,\epsilon}(x_1, \ldots, x_N) \Id x_{m+1} \ldots \Id x_N,
\end{equation*}
which solves~\eqref{eq: first_martingale_liuville_equation} with \(k^\epsilon\) instead of \(k\). 
Additionally, we define the chaotic law
\begin{equation*}
  \rho_t^{\otimes m, \epsilon}(x_1,\ldots,x_m):= \prod\limits_{i=1}^m \rho_t^\epsilon(x_i),
\end{equation*}
which solves the following equation
\begin{align*}
  \begin{split}
  &\;  \partial_t \rho_t^{\otimes m, \epsilon}(x_1,\ldots,x_m) \\
  &\quad = \;
  \frac{\sigma^2}{2} \sum\limits_{i=1}^m \partial_{x_ix_i} \rho_t^{\otimes m, \epsilon}(x_1,\ldots,x_m)
  + \sum\limits_{i=1}^m \partial_{x_i} \Big( (k*\rho_t^\epsilon)(x_i) \rho_t^{\otimes m, \epsilon}(x_1,\ldots,x_m) \Big)
  \end{split}
\end{align*}
with initial data \(\rho_0^{\otimes m, \epsilon} = \rho_0^{\otimes m}\). 

\subsection{Preliminary results}

In this subsection, we gather essential definitions, the requisite function spaces and preliminary results for the well-posedness of the above mentioned SDEs and PDEs.

Throughout the entire paper, we use the generic constant \(C\) for inequalities, which may change from line to line. The constants \(\alpha,\beta_\alpha,\beta\) are always fix and will be given by our Assumptions~\ref{ass: convergence_in_probability},~\ref{ass: law_of_large_numbers}.

For \( 1 \le p \le \infty\) we denote by \(L^p(\R)\) the space of measurable functions whose \(p\)-th power is Lebesgue integrable (with the usual modification for \(p = \infty\)) equipped with the norm \(\norm{\cdot}_{L^p(\R)}\), by \(L^1(\R, |x|^2\Id x)\) the space of all measurable functions \(f\) such that \(\int_{\R}|f(x)||x|^2\Id x <\infty\), by \(\testfunctions{\R}\) the space of all infinitely differentiable functions with compact support on \(\R\), and by \(\mathcal{S}(\R)\) the space of all Schwartz functions, see \cite[Chapter~6]{YoshidaKosaku1995FA} for more details.

Let \((Z,\norm{\cdot}_Z)\) be a Banach space. We denote by \(L^p([0,T];Z)\) the space of all strongly measurable functions \(u\colon [0,T] \to Z\) such that
\begin{equation*}
  \norm{u}_{L^p([0,T];Z)}:= \left\{\begin{array}{ll}
  \Big( \displaystyle\int_0^T \norm{u(t)}_Z^p \Id t \Big)^{\frac{1}{p}} < \infty
  , & \text{for } 1 \le p < \infty,\\[5mm]
  \displaystyle \operatorname*{ess\,sup}_{ t\in [0,T]} \norm{u(t)}_Z < \infty, & \text{for }p=\infty.
  \end{array}\right.
\end{equation*}
The Banach space \(C([0,T];Z)\) consists of all continuous functions \(u \colon [0,T] \to Z\), equipped with the norm
\begin{equation*}
  \max\limits_{t \in [0,T]} \norm{u(t)}_Z < \infty.
\end{equation*}

For a smooth function \(u \colon [0,T] \times \R^d \mapsto \R \) and a multiindex \(\kappa\) with length \(|\kappa| := \sum_i \kappa_i\), we denote the derivative with respect to  \(x^\kappa=x_1^{\kappa_1} \cdots x_d^{\kappa_d}\) by \(\partial^{\kappa} u(t,x) := \prod_i \big(\frac{\partial}{\partial_{x_i}} \big)^{\kappa_i} u(t,x) \), where we write \(\partial_{x_i} u\) or \(u_{x_i}(t,x)\) for \(\frac{\partial}{\partial x_{i}} u(t,x)\). The derivative with respect to time we denote by \(\partial_t u(t,x)\). For \(u \in \mathcal{S}'(\R^d)\) we define the Fourier transform \(\mathcal{F}[u]\) and inverse Fourier transform \( \mathcal{F}^{-1}[u]\) by
\begin{equation*}
  \mathcal{F}[u](\xi) :=  \int_{\R^d} e^{- 2\pi  i  \eta \cdot x } u(x) \Id x
  \quad \text{and}\quad
  \mathcal{F}^{-1}[u](\xi):=\int_{\R^d} e^{ 2\pi  i \eta \cdot x } u(x) \Id x .
\end{equation*}
We denote the Bessel potential for each \(s \in \R\) and \(u \in \mathcal{S}'(\R^d)\) by
\begin{equation*}
  (1-\Delta)^{s/2}u := \mathcal{F}^{-1}[(1+4\pi^2|\xi|^2)^{s/2} \mathcal{F}[u]]
\end{equation*}
and define the Bessel potential space \(\mathnormal{H}_p^s \) for \(p \in (1,\infty)\) and \(s \in \R\) by
\begin{equation*}
  \mathnormal{H}_p^s:= \{ u \in \mathcal{S}'(\R^d) \; : \;  (1-\Delta)^{s/2} u \in L^p(\R^d) \}, \mbox{ with norm } \norm{u}_{\mathnormal{H}^{s}_p(\R^d)}:= \norm{(1-\Delta)^{s/2}u}_{L^p(\R^d)}
\end{equation*}

Applying \cite[Theorem~2.5.6]{TriebelHans1983Tofs} we can characterize the above Bessel potential spaces \(\mathnormal{H}_p^m\) for \(1 <p < \infty \) and \( m \in \N\) as Sobolev spaces
\begin{equation*}
  W^{m,p} (\R^d) : = \bigg \{ u \in L^p(\R^d) \; : \; \norm{u}_{W^{m,p}(\R^d)}:= \sum\limits_{ \kappa \in \mathcal{A} ,\  |\alpha|\le m } \norm{\partial^\kappa f }_{L^p(\R^d)}  < \infty \bigg \},
\end{equation*}
where \(\partial^\kappa u \) is to be understood as weak derivatives \cite{AdamsRobertA2003Ss} and \(\mathcal{A} \) is the set of all multi-indices. Moreover, we will use the following abbreviation \(H^s(\R^d):= H^s_2(\R^d)\).

\vskip3mm
For the partial differential equations~\eqref{eq: Liouville_equation},~\eqref{eq: aggregation_diffusion_pde},~\eqref{eq: regularized_Liouville_equation} and \eqref{eq: regularized_aggregation_diffusion_pde} we rely on the concept of weak solutions, which we recall in the next definition.

\begin{definition}[Weak solutions]\label{def: weak_solution_Lioville_equation}
  Fix a time \(T>0\). A function \(\rho^{N,\epsilon} \in L^2([0,T];H^1(\R^N)) \cap L^\infty([0,T];L^2(\R^N))\) with \(\partial_t \rho^{N,\epsilon} \in L^2([0,T];H^{-1}(\R^N))\) is a weak solution of \eqref{eq: regularized_Liouville_equation} if for every \(\eta \in L^2([0,T];H^1(\R^N))\),
  \begin{align}\label{eq: regularized_Lioville_weak_solution}
    & \; \int\limits_0^T \qv{\partial_t \rho^{N,\epsilon}_t, \eta_t}_{H^{-1}(\R^N), H^1(\R^N)} \Id t    \\
    = & \;  - \sum\limits_{i=1}^N  \int\limits_0^T \int_{\R^N} \left ( \frac{\sigma^2}{2} \partial_{x_i} \rho_t^{N,\epsilon} (\mathrm{X}^N ) +  \rho^{N,\epsilon}_t (\mathrm{X}^N) \frac{1}{N} \sum\limits_{j=1}^N k^\epsilon(x_i-x_j) \rho^{N,\epsilon}_t(\mathrm{X}^N) \right) \partial_{x_i} \eta_t(\mathrm{X}^N) \Id \mathrm{X}^N  \Id t\nonumber
  \end{align}
  and \(\rho^{N,\epsilon}(0,\mathrm{X}^N ) = \prod\limits_{i=1}^N \rho_0(x_i)\). We note that the regularity \(\rho^{N,\epsilon} \in L^2([0,T];H^1(\R^N))\) and \(\partial_t \rho^{N,\epsilon} \in L^2([0,T];H^{-1}(\R^N))\) imply \(\rho^{N,\epsilon} \in C([0,T];L^2(\R^N))\) (see for example \cite[Chapter~5.9]{EvansLawrenceC2015Pde}). Similarly, we say that \(\rho^N \in L^2([0,T];H^1(\R^N)) \cap L^\infty([0,T];L^2(\R^N))\) with \(\partial_t \rho^N \in L^2([0,T];H^{-1}(\R^N))\) is a weak solution of \eqref{eq: Liouville_equation} if \eqref{eq: regularized_Lioville_weak_solution} hold with the interaction force kernel~\(k\) instead of its approximation~\(k^\epsilon\).
\end{definition}

\begin{definition}[Weak solutions]\label{def: weak_solution}
  Fix \(\epsilon > 0\) and \(T>0\). We say \(\rho^\epsilon \in L^2([0,T];H^1(\R)) \cap L^\infty([0,T];L^2(\R))\) with \(\partial_t \rho^\epsilon \in L^2([0,T];H^{-1}(\R))\) is a weak solution of \eqref{eq: regularized_aggregation_diffusion_pde} if, for every \(\eta \in L^2([0,T];H^1(\R))\),
  \begin{equation}\label{eq: regularized_weak_solution}
    \int\limits_0^T \qv{\partial_t \rho^\epsilon_t, \eta_t}_{H^{-1}(\R), H^1(\R)} \Id t = - \int\limits_0^T \int_\R \left ( \frac{\sigma^2}{2} \partial_x\rho_t^\epsilon + (k^\epsilon*\rho_t^\epsilon) \rho_t^\epsilon \right) \partial_x \eta_t \Id x \Id t
  \end{equation}
  and \(\rho^\epsilon(0,\cdot) = \rho_0\). Note that \(\rho^\epsilon \in L^2([0,T];H^1(\R))\) with \(\partial_t \rho^\epsilon \in L^2([0,T];H^{-1}(\R))\) implies \(\rho^\epsilon \in C([0,T];L^2(\R))\), see \cite[Chapter~5.9]{EvansLawrenceC2015Pde}. Similarly, we say that \(\rho \in L^2([0,T];H^1(\R)) \cap L^\infty([0,T];L^2(\R))\) with \(\partial_t \rho \in L^2([0,T];H^{-1}(\R))\) is a weak solution of \eqref{eq: aggregation_diffusion_pde} if \eqref{eq: regularized_weak_solution} holds with the interaction force kernel \(k\) instead of its approximation \(k^\epsilon\).
\end{definition}

By the regularity of the solution in Definition~\ref{def: weak_solution} we can actually weaken the assumption on \(\eta\) in equations~\eqref{eq: regularized_Lioville_weak_solution} and~\eqref{eq: regularized_weak_solution} to \(\eta \in C([0,T];C_c^\infty(\R))\).

\begin{remark}
  The divergence structure of the PDEs \eqref{eq: aggregation_diffusion_pde} and \eqref{eq: regularized_aggregation_diffusion_pde}, respectively, implies mass conservation/the normalisation condition
  \begin{equation*}
    1 = \int_\R \rho_t(x) \Id x = \int_\R \rho^\epsilon_t(x) \Id x
  \end{equation*}
  for all \(0 \le t \le T\) under Assumption~\ref{ass: initial condition}. This is an immediate consequence by plugging in a cut-off sequence, see~\cite[Lemma~8.4]{BrezisHaim2011FaSs}, which converges to the constant function \(1\) as a test function in~\eqref{eq: regularized_weak_solution}.
\end{remark}

Throughout the entire paper we make the following assumptions on the initial condition~\(\rho_0\) of the interacting particle system and the interaction force kernel~$k$.

\begin{assumption} \label{ass: initial condition}
  The initial condition \(\rho_0 \colon \R \to \R \) fulfills
  \begin{equation}\label{eq: initial_condition_integrability}
	\rho_0 \in L^1(\R) \cap L^\infty(\R) \cap L^1(\R, |x|^2 \Id x ) ,\quad
	\rho_0 \ge 0 ,\quad\text{and}\quad
	\int_{\R} \rho_0(x) \Id x = 1 .
  \end{equation}
\end{assumption}

We recall some general facts, which will be used throughout the article. First, we notice that we have a solution \((\rho_t^{N,\epsilon}, t \ge 0)\) of the regularized PDE~\eqref{eq: regularized_Liouville_equation} in the sense of Definition~\ref{def: weak_solution_Lioville_equation}, which follows from the regularity of \(k^\epsilon\). We also have a solution \((\rho_t^{N}, t \ge 0)\) in the sense of Definition~\ref{def: weak_solution_Lioville_equation} in the case \(k\in L^\infty(\R)\) and the equation is linear. By standard SDE theory we also obtain strong solutions \((\mathbf{X}_t^{N,\epsilon},t \ge 0)\), \((\mathbf{Y}_t^{N,\epsilon},t \ge 0)\) to the regularized SDEs~\eqref{eq: regularized_particle_system},~\eqref{eq: regularized_mean_field_trajectories}. For the well-posedness of the particle system~\eqref{eq: particle_system} and McKean--Vlasov SDE~\eqref{eq: mean_field_trajectories} we refer to~\cite[Theorem~3.7]{hao2022} and~\cite[Theorem~4.10]{hao2022}, respectively. Additionally, \cite[Section~3]{Nikolaev2023} guarantees the well-posedness of PDEs~\eqref{eq: regularized_aggregation_diffusion_pde},~\eqref{eq: aggregation_diffusion_pde}, which are bounded in time and space uniformly in \(\epsilon\). Consequently, our framework is well-defined and, in particular, the empirical measure \(\mu_t^{N, \epsilon}\) given by~\eqref{eq: def_emprirical_measure_reg} is well-defined. 

\vskip3mm

The analysis of the entropy relies on the convergence of the particle system~\eqref{eq: regularized_particle_system} to the particle system~\eqref{eq: regularized_mean_field_trajectories} in probability.  Hence, we introduce the following convergence in probability assumption.

\begin{assumption}\label{ass: convergence_in_probability}
  Let \((\mathbf{X}_t^{N,\epsilon}, t \ge 0)\), \((\mathbf{Y}_t^{N,\epsilon}, t \ge 0)\) be given by~\eqref{eq: regularized_particle_system},~\eqref{eq: regularized_mean_field_trajectories}. Then for \(\alpha \in (0,1/2)\), \(\beta_\alpha \in (0, \alpha)\), \(\beta \le \beta_\alpha\), \(\epsilon \sim N^{-\beta}\) there exists an \(N_0 \in \N\) such that for all \(N \ge N_0\), \(\gamma > 0\) we have
  \begin{equation}\label{eq: convergence_in_probability}
    \P\left(\sup\limits_{0 \le t \le T} \sup\limits_{1\le i \le N} |X_t^{i,\epsilon}-Y_t^{i,\epsilon}| \ge N^{-\alpha}\right) \le C(\gamma) N^{-\gamma},
  \end{equation}
  where \(C(\gamma)\) depends on the initial density \(\rho_0\), the final time \(T>0\), \(\alpha\) and \(\gamma\).
\end{assumption}

\vskip3mm
This assumptions is satisfied by a variety of models~\cite{lazarovici2017mean,HuangHiuLius2019,LiuYang2019,Fetecau_2019,HaKimPickl2019,CarrilloChoiSalem2019,HuangLiuPickl2020,ChenLiPickl2020}. In particular for bounded \(k\) or even singular kernels this assumption is fulfilled, see~\cite{Nikolaev2023}.

\vskip3mm
Furthermore, we need the following law of large numbers result.  

\begin{assumption}\label{ass: law_of_large_numbers}
  Let \((\mathbf{Y}_t^{N,\epsilon}, t \ge 0)\) and \(\rho^\epsilon_t\) be given by ~\eqref{eq: regularized_mean_field_trajectories}. Assume further that \(0< \alpha, \delta\), \(0<\alpha+\delta<1/2\), \(\epsilon \sim N^{-\beta}\) with \(\beta_\alpha \in (0, \alpha)\), \(\beta \le \beta_\alpha\) and define for \(0 \le t \le T\) the following sets
  \begin{equation*}
    B_t^\alpha := \bigg\{ \max\limits_{1 \le i \le N} \bigg| \sum\limits_{j=1}^N  k^\epsilon(Y_t^{i,\epsilon}-Y_t^{j,\epsilon})-(k*\rho_t^\epsilon)(Y_t^{i,\epsilon}) \bigg|\le N^{-(\delta+\alpha)}  \bigg\}.
  \end{equation*}
  Then, for each \(\gamma > 0\) there exists a \(C(\gamma) > 0\) such that
  \begin{equation}\label{eq: lln_small_b_set}
    \P( (B_t^\alpha)^{\mathrm{c}}) 
    \le C(\gamma) N^{-\gamma }
  \end{equation}
  for every \(0 \le t \le T\), where the constant \(C(\gamma) \) is independent of \(t \in [0,T]\).
\end{assumption}

We refer again to~\cite{lazarovici2017mean,HuangHiuLius2019,LiuYang2019,Fetecau_2019,HaKimPickl2019,CarrilloChoiSalem2019,HuangLiuPickl2020,ChenLiPickl2020}. In particular, the assumption is satisfied for bounded forces \(k\), which satisfy a local Lipschitz bound~\cite{Nikolaev2023}.

\subsection{Main results:}

Let \(J^\epsilon(x) : = \frac{1}{\epsilon} J\big( \frac{x}{\epsilon} \big )\) with \(J \colon \R \to \R\) a given mollification kernel and let \(\zeta\) be a cut-off function, which satisfies \(|\zeta| \le 1 \), \(\zeta = 1 \) on \(B(0,1)\) and \(\zeta = 0\) on \(B(0,2)^{\mathrm{c}}\), \(\zeta^\epsilon(x) = \zeta(\epsilon x)\). 
We need the following assumptions on the mollified version of interaction force separately to state the main result of this paper.
 
\begin{definition}\label{def: admissible}
  We say \(W^\epsilon, V^\epsilon\) are admissible approximations, if \(W^\epsilon \in L^2(\R)\) and \(V^\epsilon \in H^2(\R)\) with
  \begin{equation}\label{eq: main_thm_v_w_norm}
    \norm{W^\epsilon}_{L^2(\R)} \le  C\epsilon^{-a_W} , \quad \norm{V^\epsilon}_{H^2(\R)} \le C\epsilon^{-a_V}
  \end{equation}
  for some \(C>0\) and \(a_W,a_V >0\). We say admissible approximations \(W^\epsilon, V^\epsilon\) are strongly admissible approximations, if the above inequality holds for \(\norm{W^\epsilon}_{H^2(\R)}\) instead of \(\norm{W^\epsilon}_{L^2(\R)} \).
\end{definition} 

In general we will consider two type of forces. First, \(k^\epsilon= W^\epsilon*V^\epsilon\) and second \(k^\epsilon= (W^\epsilon*V^\epsilon)_x\). The potential field structure of the latter one will be required for the definition of the modulated energy (see Section~\ref{sec: relative_entropy_method}).
This assumption on \(k\) include many different forces, where no potential field is needed.

\begin{remark}
  Some typical examples for the above structure are as follows:
  \begin{enumerate}
    \item The interaction force kernel \(k \in L^2(\R)\). Then \(W^\epsilon = k\) and \(V^\epsilon = J^\epsilon\) is just the standard mollified version of \(k\).
    \item If \(k \in L^p(\R) \) for \(p< \infty\), we can choose \(W^\epsilon= k*J^\epsilon\) and \(V^\epsilon = J^\epsilon\), which is also just a mollification of \(k\).
    \item If \(k\in L^\infty(\R)\) we may choose \(W^\epsilon= \zeta^\epsilon(k*J^\epsilon)\) and \(V^\epsilon = J^\epsilon\), where \(\zeta^\epsilon\) is defined as a cut-off function to guarantee integrability of the mollification \(k*J^\epsilon\).
  \end{enumerate}
\end{remark}

The first main result of this paper is the propagation of chaos on the mollified level with $\epsilon=N^{-\beta}$:

\begin{theorem}\label{maintheorem}
  Let \(\rho^{N,\epsilon} \) and \(\rho^{\epsilon}\) be the non-negative solutions of~\eqref{eq: regularized_Liouville_equation} and  of~\eqref{eq: regularized_aggregation_diffusion_pde} respectively. Assume that the convergence in probability, Assumption~\ref{ass: convergence_in_probability}, and the law of large numbers, Assumption~\ref{ass: law_of_large_numbers} hold for \(\alpha \in (\frac{1}{4},\frac{1}{2})\). Let \(k^\epsilon=W^\epsilon*V^\epsilon\) and \(W^\epsilon \in L^2(\R), V^\epsilon \in H^2(\R)\) be admissible in the sense of Definition~\ref{def: admissible} with rate \(a_W,a_V\). Then there exists a $\beta_1\in (0,\beta_\alpha)$ such that $\forall \beta\in(0,\beta_1)$, the following propagation of chaos result holds for $\epsilon=N^{-\beta}$ between \eqref{eq: regularized_Liouville_equation} and  of~\eqref{eq: regularized_aggregation_diffusion_pde}.
  \begin{equation} \label{eq: main_thm_entropy_estimate}
	\norm{\rho_t^{N,2,\epsilon} - \rho_t^{ \epsilon} \otimes \rho_t^{ \epsilon}}_{L^1(\R^2)}^2
	\le 2{\mathcal{H}}_2(\rho_t^{N,1,\epsilon} \vert \rho_t^{ \epsilon})
	\le 4{\mathcal{H}}_N(\rho_t^{N,\epsilon} \vert \rho_t^{\otimes N , \epsilon})
	=o\bigg(\frac{1}{\sqrt{N}}\bigg).
  \end{equation}
  where $\rho^{N,2,\epsilon}$ is the $2$-marginal density of $\rho^{N,\epsilon} $.

  Furthermore, if \(k^\epsilon = (W^\epsilon*V^\epsilon)_x\) with \(W^\epsilon,V^\epsilon\) being admissible approximations with the same rate \(a_W,a_V\) as before, then the estimate~\eqref{eq: main_thm_entropy_estimate} still holds with \(\beta \in (0,\beta_1)\). Moreover, if \(W^\epsilon,V^\epsilon\) are strongly admissible, then there exists $\beta_2\in (0,\beta_\alpha)$ such that $\forall \beta\in(0,\beta_2)$, the following estimate for regularized modulated energy holds with $\epsilon=N^{-\beta}$ between \eqref{eq: regularized_Liouville_equation} and  of~\eqref{eq: regularized_aggregation_diffusion_pde}.
  \begin{align*}
	 {\mathcal K}_{N}  (\rho_t^{N,\epsilon}\vert \rho_t^{\otimes N, \epsilon})
	 = \frac{1}{\sigma^2}\E \bigg( \int_{\R^2}( W^\epsilon*V^\epsilon)(x-y)\ \Id  (\mu_t^{N,\epsilon} -\rho^{\epsilon}_t)(x)\,\Id  (\mu_t^{N,\epsilon}-\rho^\epsilon_t)(y) \bigg)=o\bigg(\frac{1}{\sqrt{N}}\bigg). 
  \end{align*}
\end{theorem}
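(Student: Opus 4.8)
The plan is to derive a differential inequality for the quantities $\mathcal H_N(\rho_t^{N,\epsilon}\vert\rho_t^{\otimes N,\epsilon})$ and $\mathcal K_N(\rho_t^{N,\epsilon}\vert\rho_t^{\otimes N,\epsilon})$, exactly in the spirit of the relative-entropy method of Jabin--Wang and Serfaty, but adapted to the convolution structure $k^\epsilon=W^\epsilon*V^\epsilon$ (resp. its derivative) so that the error terms can be controlled by an $\epsilon$-weighted $L^2$-type norm of $\mu_t^{N,\epsilon}-\rho_t^\epsilon$ rather than by a large-deviation estimate. First I would write out, via Itô's formula applied to $\log(\rho_t^{N,\epsilon}/\rho_t^{\otimes N,\epsilon})$ (or equivalently a direct computation using the two Liouville-type equations), the identity
\begin{equation*}
  \frac{\d}{\d t}\mathcal H_N(\rho_t^{N,\epsilon}\vert\rho_t^{\otimes N,\epsilon})
  = -\frac{\sigma^2}{2N}\int_{\R^N}\rho_t^{N,\epsilon}\Big|\nabla\log\tfrac{\rho_t^{N,\epsilon}}{\rho_t^{\otimes N,\epsilon}}\Big|^2
  + \frac{1}{N}\int_{\R^N}\rho_t^{N,\epsilon}\,\big(\text{drift difference}\big)\cdot\nabla\log\tfrac{\rho_t^{N,\epsilon}}{\rho_t^{\otimes N,\epsilon}},
\end{equation*}
where the drift difference is $\frac1N\sum_j k^\epsilon(x_i-x_j)-(k^\epsilon*\rho_t^\epsilon)(x_i)$, which is precisely $k^\epsilon*(\mu_t^{N,\epsilon}-\rho_t^\epsilon)(x_i)$ evaluated along the particle configuration. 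Young's inequality then absorbs half of the Fisher-information term and leaves a quadratic term of the form $\tfrac{C}{\sigma^2}\E\int|k^\epsilon*(\mu_t^{N,\epsilon}-\rho_t^\epsilon)|^2\,\d\mu_t^{N,\epsilon}$, plus a lower-order commutator term.

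The second step is the heart of the matter: estimating $\E\int_\R|k^\epsilon*(\mu_t^{N,\epsilon}-\rho_t^\epsilon)(x)|^2\rho_t^\epsilon(x)\,\d x$ (after swapping $\mu^{N,\epsilon}$ for $\rho^\epsilon$ in the outer integration at the cost of another small error, controlled by Assumption~\ref{ass: convergence_in_probability}). Using $k^\epsilon=W^\epsilon*V^\epsilon$ we rewrite $k^\epsilon*(\mu^{N,\epsilon}-\rho^\epsilon)=W^\epsilon*\big(V^\epsilon*(\mu^{N,\epsilon}-\rho^\epsilon)\big)$, apply Young's convolution inequality to move $\|W^\epsilon\|_{L^2}$ (or $\|W^\epsilon\|_{H^2}$ in the strongly admissible case) out, and reduce everything to bounding $\E\|V^\epsilon*(\mu_t^{N,\epsilon}-\rho_t^\epsilon)\|_{L^2(\R)}^2$ — the mollified $L^2$ fluctuation of the empirical measure. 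This is exactly the quantity Oelschläger's method and the propagation-of-chaos-in-probability results (Assumption~\ref{ass: law_of_large_numbers}, via the set $B_t^\alpha$, together with Assumption~\ref{ass: convergence_in_probability}) are designed to handle: on the good set one gets a bound of order $N^{-2(\delta+\alpha)}$ times $\|V^\epsilon\|$-type factors, and on the complement the polynomially small probability $C(\gamma)N^{-\gamma}$ beats any $\epsilon$-power blow-up $\epsilon^{-a}=N^{\beta a}$ provided $\gamma$ is chosen large. For the modulated-energy statement one uses instead that $\mathcal K_N$ is itself (up to the $\sigma^{-2}$ factor) the expectation of $\|V^\epsilon*(\mu_t^{N,\epsilon}-\rho_t^\epsilon)\|_{L^2}^2$ when $W^\epsilon*V^\epsilon$ is a positive-definite kernel $-$ strictly speaking one writes $\int W^\epsilon*V^\epsilon\,\d(\mu-\rho)\,\d(\mu-\rho)=\int\widehat{W^\epsilon}|\widehat{V^\epsilon*(\mu-\rho)/\widehat{V^\epsilon}}|^2$ or, more robustly here, bounds it directly by $\|W^\epsilon\|_{L^2}\|V^\epsilon*(\mu-\rho)\|_{L^2}^2$-style Young estimates $-$ so the same fluctuation bound closes it.

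The third step is bookkeeping of the exponents. Collecting the bounds, Grönwall's lemma applied to the differential inequality for $\mathcal H_N$ (the linear-in-$\mathcal H_N$ term coming from the commutator/lower-order piece, whose coefficient is controlled by $\|\nabla(k^\epsilon*\rho_t^\epsilon)\|_{L^\infty}\le C\epsilon^{-a}$, uniformly in time by the PDE well-posedness cited from \cite{Nikolaev2023}) yields
\begin{equation*}
  \mathcal H_N(\rho_t^{N,\epsilon}\vert\rho_t^{\otimes N,\epsilon})\le e^{CT\epsilon^{-a}}\Big(C\,T\,\epsilon^{-(2a_W+\text{stuff})}N^{-2(\delta+\alpha)}+C(\gamma)N^{-\gamma}\Big),
\end{equation*}
and since $\epsilon=N^{-\beta}$ this is $\exp(CTN^{\beta a})\cdot N^{\beta b}\cdot N^{-2(\delta+\alpha)}$; because $2(\delta+\alpha)>1$ one can choose $\beta_1>0$ small enough (depending on $a,b,\alpha,\delta$) so that for all $\beta\in(0,\beta_1)$ the exponential prefactor is $N^{o(1)}$ and the net power is strictly below $-1/2$, giving $o(N^{-1/2})$. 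The sub-multiplicative inequalities on the left of~\eqref{eq: main_thm_entropy_estimate} are then the standard Csiszár--Kullback--Pinsker inequality plus the sub-additivity of relative entropy over marginals (Jabin--Wang), which I would invoke without reproving. The analogous argument with $\beta_2$ handles $\mathcal K_N$, using the strongly admissible bound $\|W^\epsilon\|_{H^2}\le C\epsilon^{-a_W}$ where needed to control the extra derivatives that appear when $k^\epsilon=(W^\epsilon*V^\epsilon)_x$.

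The main obstacle I expect is making the interplay between the exponential Grönwall factor $e^{CT\epsilon^{-a}}=e^{CTN^{\beta a}}$ and the gain $N^{-2(\delta+\alpha)}$ genuinely work: one must verify that the constant $a$ in the exponent (governed by $\|\nabla k^\epsilon*\rho^\epsilon\|_\infty$ and any other $\epsilon^{-a}$ losses from Young's inequality and from passing $\mu^{N,\epsilon}\to\rho^\epsilon$ in the outer integral) is such that, after fixing $\beta<\beta_1$, $N^{\beta a}$ grows slower than any power of $N$ — i.e. that the whole estimate is not self-defeating. A careful tracking of exactly which norms of $W^\epsilon,V^\epsilon$ enter (and where the $H^2$ rather than $L^2$ control of $V^\epsilon$, resp. $W^\epsilon$, is essential) is the delicate part; the rest is a fairly mechanical assembly of Itô calculus, Young's inequality, the two probabilistic assumptions, and Grönwall.
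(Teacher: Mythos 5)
Your overall architecture matches the paper's: differentiate $\mathcal H_N$, use Young's inequality against the Fisher information, exploit $k^\epsilon=W^\epsilon*V^\epsilon$ to reduce everything to $\E\|V^\epsilon*(\mu_t^{N,\epsilon}-\rho_t^\epsilon)\|_{L^2}^2$, and close with Csisz\'ar--Kullback--Pinsker and the marginal subadditivity of relative entropy. However, there are two genuine problems. First, your Gr\"onwall step is self-defeating as written, and the paper shows it is also unnecessary. You propose a differential inequality with a linear-in-$\mathcal H_N$ ``commutator'' term of size $\|\nabla(k^\epsilon*\rho^\epsilon)\|_{L^\infty}\le C\epsilon^{-a}$, producing a prefactor $e^{CT\epsilon^{-a}}=e^{CTN^{\beta a}}$; for any fixed $\beta>0$ and $a>0$ this grows faster than every power of $N$ (it is $e^{N^{\beta a}}$, never $N^{o(1)}$, since $N^{\beta a}\gg\log N$), so no choice of $\beta_1$ rescues the claimed $o(N^{-1/2})$ rate. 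In the paper's Lemma~\ref{lemma: relative_entropy_L_2_estimate} the entropy computation closes \emph{without} any such term: the drift difference $\frac1N\sum_j k^\epsilon(x_i-x_j)-k^\epsilon*\rho^\epsilon(x_i)$ is paired with $\partial_{x_i}\log(\rho^{N,\epsilon}/\rho^{\otimes N,\epsilon})$ and absorbed entirely into half of the Fisher information by Young's inequality, leaving only the quadratic fluctuation term bounded by $\sigma^{-2}\|W^\epsilon\|_{L^2}^2\,\E\|V^\epsilon*(\mu^{N,\epsilon}-\rho^\epsilon)\|_{L^2}^2$ (the outer $\mu$-integration is handled by $\|W^\epsilon*f\|_{L^\infty}\le\|W^\epsilon\|_{L^2}\|f\|_{L^2}$ and $\langle\mu,1\rangle=1$, so your swap of the outer measure to $\rho^\epsilon$ is not needed). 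One then simply time-integrates; no Gr\"onwall, no exponential loss.

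Second, the bound on $\E\|V^\epsilon*(\mu_t^{N,\epsilon}-\rho_t^\epsilon)\|_{L^2}^2$ is asserted rather than derived, and it is the main technical content of the paper (Theorem~\ref{theorem: emp_measure_l2_estimate}). It does not follow ``on the good set'' from Assumptions~\ref{ass: convergence_in_probability} and~\ref{ass: law_of_large_numbers} alone: the law of large numbers controls $\max_i|\frac1N\sum_jk^\epsilon(Y^i-Y^j)-k^\epsilon*\rho^\epsilon(Y^i)|$, which is a different quantity. The actual proof applies It\^o's formula to $V^\epsilon*\mu_t^{N,\epsilon}$, removes the stochastic integral via a BDG reduction (Lemma~\ref{lem: aux_reduction_empirical_measure}), runs a parabolic energy identity in which the quadratic term $\|V_x^\epsilon*(\mu-\rho)\|_{L^2}^2$ is absorbed by the negative dissipation coming from the Laplacian (again, not by Gr\"onwall), and only then splits on $B_s^\alpha$, using an i.i.d.\ variance computation for the $Y^{i,\epsilon}$ to handle the remaining centred term. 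The resulting rate is a combination of $N^{-1}$, $N^{-2\alpha}$, $N^{-(\alpha+1/2)}$ and $N^{-\gamma}$ contributions weighted by various norms of $V^\epsilon$, $k^\epsilon$ --- not a single $N^{-2(\delta+\alpha)}$ term. Your sketch of the modulated-energy part is essentially right in spirit (Cauchy--Schwarz on $\langle\hat W^\epsilon*(\mu-\rho),V^\epsilon*(\mu-\rho)\rangle$ needs the $L^2$ estimate applied to both factors, whence strong admissibility of $W^\epsilon$), but it inherits the same gap.
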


\begin{remark}\label{remark: alpha_restriction}
  In obtaining the estimate for smoothed modulated energy, the proof has been done with the identity
  \begin{align*}
    &{\mathcal K}_{N}  (\rho_t^{N,\epsilon}\vert \rho_t^{\otimes N, \epsilon}) =\; \frac{1}{\sigma^2} \E \bigg( \Big\langle \hat{W}^\epsilon*(\mu_t^{N,\epsilon}-\rho_t^\epsilon), V^\epsilon*(\mu_t^{N,\epsilon}-\rho_t^\epsilon)\Big\rangle \bigg) ,
  \end{align*}
  where \(\hat{W}(x) = W(-x)\) is the reflection. Again choosing for instance \(W^\epsilon=J^\epsilon\) we may borrow an additional factor from the mollification kernel $J^\epsilon$, which will weaken the convergence rate estimate, or in other words, one has to choose even smaller $\beta$ to achieve the order $o(\frac{1}{\sqrt{N}})$. The restriction \(\alpha \in (\frac{1}{4},\frac{1}{2})\) is in place to guarantee the order $o(\frac{1}{\sqrt{N}})$. The convergence of the relative entropy holds also without this restriction.
\end{remark}

Additionally, for bounded force, we know from~\cite{Nikolaev2023} that convergence in probability holds for approximations \((k^\epsilon,\epsilon >0\), which satisfy a local Lipschitz bound.  Therefore, we can obtain the propagation of chaos result without mollification. 

\begin{theorem}\label{maintheorem2}
  Assume that $k\in L^\infty(\R)$, the condition for initial data \ref{ass: initial condition} holds. Suppose the Assumptions~\ref{ass: convergence_in_probability},~\ref{ass: law_of_large_numbers} hold for the approximation \(k^\epsilon=(\zeta^\epsilon(k*J^\epsilon))*J^\epsilon\). Then, for any fix \(m\in \N\), we have the convergence of the \(m\)-th marginal of the Lioville equation~\eqref{eq: Liouville_equation} to the aggregation-diffusion equation~\eqref{eq: aggregation_diffusion_pde} in the \(L^1(\R^m)\)-norm, i.e.
  \begin{equation*}
    \lim\limits_{N \to \infty} \norm{\rho^{N,m}-\rho^{\otimes m}}_{L^1([0,T];L^1(\R^m))} = 0.
  \end{equation*}
\end{theorem}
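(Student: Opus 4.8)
My anal**Proof proposal for Theorem~\ref{maintheorem2}.**

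The plan is to use Theorem~\ref{maintheorem} as the "approximated" intermediate result and then close the two remaining gaps by triangle inequality: the stability of the Liouville/aggregation-diffusion equations under removing the regularization. Concretely, for fixed $m \in \N$ I would write
\[
  \norm{\rho^{N,m}-\rho^{\otimes m}}_{L^1([0,T];L^1(\R^m))}
  \le \norm{\rho^{N,m}-\rho^{N,m,\epsilon}}_{L^1([0,T];L^1(\R^m))}
   + \norm{\rho^{N,m,\epsilon}-\rho^{\otimes m,\epsilon}}_{L^1([0,T];L^1(\R^m))}
   + \norm{\rho^{\otimes m,\epsilon}-\rho^{\otimes m}}_{L^1([0,T];L^1(\R^m))},
\]
with $\epsilon = N^{-\beta}$ for a suitable small $\beta\in(0,\beta_1)$. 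The middle term is controlled by Theorem~\ref{maintheorem}: since $k\in L^\infty(\R)$, the approximation $k^\epsilon = (\zeta^\epsilon(k*J^\epsilon))*J^\epsilon$ is of the admissible convolution form $W^\epsilon * V^\epsilon$ with $W^\epsilon = \zeta^\epsilon(k*J^\epsilon)$ and $V^\epsilon = J^\epsilon$, so \eqref{eq: main_thm_entropy_estimate} gives, by sub-additivity of relative entropy over marginals, $\norm{\rho_t^{N,m,\epsilon}-\rho_t^{\otimes m,\epsilon}}_{L^1(\R^m)}^2 \le C\, {\mathcal H}_N(\rho_t^{N,\epsilon}\vert \rho_t^{\otimes N,\epsilon}) = o(N^{-1/2})$ uniformly in $t\in[0,T]$; integrating in $t$ the middle term vanishes as $N\to\infty$.

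For the third term, I would use the $\epsilon$-stability of the one-particle aggregation-diffusion equation: the solutions $\rho^\epsilon$ of~\eqref{eq: regularized_aggregation_diffusion_pde} are bounded in $L^\infty([0,T];L^1\cap L^\infty(\R))$ uniformly in $\epsilon$ (as recalled from~\cite{Nikolaev2023}), and $k^\epsilon \to k$ in, say, $L^1_{\mathrm{loc}}$ with uniformly bounded $L^\infty$ norm; a standard weak-stability / compactness argument (or a direct Grönwall estimate on $\norm{\rho_t^\epsilon-\rho_t}_{L^1(\R)}$ using $\partial_x(\rho^\epsilon k^\epsilon*\rho^\epsilon - \rho\, k*\rho)$ and the boundedness of the densities and of $k,k^\epsilon$) yields $\norm{\rho^\epsilon - \rho}_{C([0,T];L^1(\R))}\to 0$. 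Then $\norm{\rho_t^{\otimes m,\epsilon}-\rho_t^{\otimes m}}_{L^1(\R^m)} \le \sum_{i=1}^m \norm{\rho_t^\epsilon-\rho_t}_{L^1(\R)}\prod_{j\neq i}\norm{\rho_t^{\epsilon}\text{ or }\rho_t}_{L^1(\R)} = m\norm{\rho_t^\epsilon-\rho_t}_{L^1(\R)}$ by telescoping (the $L^1$-norms being $1$ by mass conservation), so this term also vanishes.

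The first term — the stability of the $N$-particle Liouville equation under regularization of $k$ — is the one I expect to be the main obstacle, and it is the reason the theorem is stated only for bounded kernels. Here $k\in L^\infty(\R^N\text{-coefficients})$ makes~\eqref{eq: Liouville_equation} a \emph{linear} Fokker--Planck equation with bounded drift $b^N_i(\mathrm X^N) = \frac1N\sum_j k(x_i-x_j)$, so a weak solution $\rho^N \in L^2([0,T];H^1(\R^N))\cap L^\infty([0,T];L^2(\R^N))$ exists (as noted after Definition~\ref{def: weak_solution_Lioville_equation}), and likewise $\rho^{N,\epsilon}$ with drift $b^{N,\epsilon}_i = \frac1N\sum_j k^\epsilon(x_i-x_j)$. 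I would estimate $w^N := \rho^{N,\epsilon}-\rho^N$, which solves the same type of equation with an extra source term $\sum_i \partial_{x_i}(\rho^N (b^{N,\epsilon}_i - b^N_i))$; testing against $\mathrm{sgn}(w^N)$ (rigorously, against a smooth approximation, then using the Kato-type inequality / the standard $L^1$-contraction argument for Fokker--Planck equations) gives
\[
  \frac{d}{dt}\norm{w^N_t}_{L^1(\R^N)} \le \sum_{i=1}^N \int_{\R^N} |b^{N,\epsilon}_i - b^N_i|\,|\partial_{x_i}\rho^N_t|\,\Id \mathrm X^N,
\]
which I would bound using $\norm{b^{N,\epsilon}_i - b^N_i}_{L^\infty} \le \norm{k^\epsilon - k}_{L^\infty}$ — but $k^\epsilon \to k$ only in $L^p$, not $L^\infty$, for merely bounded $k$, so this crude bound fails. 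Instead I would split the drift difference into a region where $k^\epsilon - k$ is small in $L^\infty$ (if $k$ has a modulus of continuity, or after a further mollification step) and a bad region of small measure, controlling the latter via the $L^2([0,T];H^1(\R^N))$ bound on $\rho^N$; alternatively, integrate in time first, write the term as $\int_0^T\!\int_{\R^N}(b^{N,\epsilon}_i-b^N_i)\,\partial_{x_i}\rho^N\,\Id\mathrm X^N\,\Id t$ and pass to the limit by weak convergence $b^{N,\epsilon}_i\to b^N_i$ in $L^q$ together with $\partial_{x_i}\rho^N \in L^2$, obtaining $\norm{w^N}_{L^1([0,T];L^1(\R^N))}\to 0$ for each fixed $N$. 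Since $\norm{\rho^{N,m}_t - \rho^{N,m,\epsilon}_t}_{L^1(\R^m)} \le \norm{w^N_t}_{L^1(\R^N)}$ by Jensen/marginalization, this controls the first term; but note the limit here is $\epsilon\to 0$ for \emph{fixed} $N$, whereas the middle term needed $\epsilon = N^{-\beta}\to 0$ \emph{with} $N$ — so the final diagonal argument requires choosing, for each $N$, a rate at which both the $\epsilon$-stability error (at that fixed $N$) and the $o(N^{-1/2})$ entropy term are simultaneously small, which is the delicate bookkeeping step. A cleaner route, if the $\epsilon$-stability estimate for the Liouville equation can be made \emph{quantitative and uniform in $N$} (e.g. of the form $C(T)\,\omega(\epsilon)$ with $\omega$ the modulus of continuity of $k$, independent of $N$), is to just take $\epsilon=N^{-\beta}$ throughout; I would aim for that and treat the non-uniform fallback only if the uniform bound is unavailable.
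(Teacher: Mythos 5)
Your overall architecture is the same as the paper's: the triangle inequality with the three terms $\rho^{N,m}\!-\!\rho^{N,m,\epsilon}$, $\rho^{N,m,\epsilon}\!-\!\rho^{\otimes m,\epsilon}$, $\rho^{\otimes m,\epsilon}\!-\!\rho^{\otimes m}$, with $\epsilon=N^{-\beta}$, the middle term killed by the regularized relative entropy bound plus Csisz\'ar--Kullback--Pinsker, and the third term killed by the $L^1$-stability of the one-particle PDE (which the paper simply cites from \cite{Nikolaev2023} as \eqref{eq: l1_coonv_mean_pde}) together with the same telescoping/mass-conservation argument. Those two terms are fine.

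The genuine gap is in the first term, and you have correctly diagnosed but not closed it. Your $L^1$-contraction (or weak-convergence) argument for the Liouville equation yields at best $\lim_{\epsilon\to 0}\norm{\rho^{N,\epsilon}-\rho^{N}}_{L^1}=0$ for \emph{fixed} $N$, with no rate and no uniformity in $N$: the source term is weighted by $\sum_i|\partial_{x_i}\rho^N|$, whose natural bounds grow with $N$, and $\norm{k^\epsilon-k}_{L^\infty}$ does not vanish for merely bounded $k$. Since the same $\epsilon=N^{-\beta}$ must be used in all three terms simultaneously, a ``diagonal'' choice is not available, and the conclusion does not follow. The paper closes this gap with a different mechanism (Lemma~\ref{lemma: Lioville_equations_approximation_convergence}): it estimates the \emph{relative entropy} ${\mathcal H}_N(\rho^{N,\epsilon}_t\,|\,\rho^N_t)$, whose time derivative produces the error term $\frac{1}{\sigma^2N^2}\sum_{i,j}\int|k-k^\epsilon|^2(x_i-x_j)\,\rho^{N,\epsilon}_s$. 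By exchangeability this collapses to the \emph{two}-particle integral $\frac{1}{\sigma^2}\int_{\R^2}|k-k^\epsilon|^2\,\rho^{N,2,\epsilon}_s$, and then the already-established regularized propagation of chaos (your middle term) plus \eqref{eq: l1_coonv_mean_pde} lets one replace $\rho^{N,2,\epsilon}_s$ by the fixed density $\rho_s\otimes\rho_s$ up to vanishing errors; the resulting $N$-independent integral $\int|k-k^\epsilon|^2\rho_s\otimes\rho_s$ then tends to zero by dominated convergence. This self-referential use of the chaos estimate to de-regularize the Liouville equation is the missing idea: it is exactly what makes the bound uniform in $N$ along $\epsilon=N^{-\beta}$, and it is the step your proposal would need to supply.
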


\begin{remark}
  The Theorem holds for more general approximation \(k^\epsilon\) as long as the approximation \(k^\epsilon \in L^2(\R)\) and the convergence in probability holds. We refer to~\cite{Nikolaev2023} for an overview of the topic of convergence in probability in the bounded case \(k\in L^\infty(\R)\).
\end{remark}

Let us finish the section with an overview over the constants: 
\begin{itemize}
  \item \(\alpha \in (0,1/2)\) provides the rate on the distance of the particles in the convergence in probability
  \begin{equation*}
    \sup\limits_{1 \le i \le N} |X_t^{i,\epsilon}-Y_t^{i,\epsilon}| \ge N^{-\alpha}
  \end{equation*}
  and in the law of large numbers
  \begin{equation*}
    \bigg\{ \max\limits_{1 \le i \le N} \bigg| \sum\limits_{j=1}^N  k^\epsilon(Y_t^{i,\epsilon}-Y_t^{j,\epsilon})-(k*\rho_t^\epsilon)(Y_t^{i,\epsilon}) \bigg|\ge N^{-(\delta+\alpha)}  \bigg\}.
  \end{equation*}
  \item \(\beta_\alpha \in (0,\alpha)\) provides the maximum interval \((0,\beta_\alpha)\) for the cut-off parameter \(\beta\), for which the convergence in probability and law of large numbers hold.
  \item \(\beta\) is the convergence rate of the approximated particles \(X_t^{i,\epsilon},Y_t^{i,\epsilon}\) such that \(\epsilon= N^{-\beta}\).
  \item \(\beta_1,\beta_2\) provide the maximum intervals \((0,\beta_1),(0,\beta_2)\) such that the relative entropy and modulated energy converges with rate greater than \(1/2\), (see~\eqref{eq: main_thm_entropy_estimate}).
\end{itemize}

\section{Relative entropy method}\label{sec: relative_entropy_method}

This section is devoted to present the relative entropy method for the moderate interacting problem and its connection to the $L^2$ estimate proposed by Oelschl\"ager~\cite{Oeschlager1987}. We derive the smoothed $L^2$ estimate for given force $k$ (no requirement as a potential field), and the smoothed modulated energy for potential field with convolution structure. Both lead to the estimate of the relative entropy between $\rho^{N,\epsilon}$ and $\rho^{\otimes N,\epsilon}$.

The main idea is to use the assumption of convergence in probability (Assumption~\ref{ass: convergence_in_probability}), the structure of the PDEs~\eqref{eq: regularized_Liouville_equation}, \eqref{eq: regularized_aggregation_diffusion_pde} and the law of large number (Assumption~\ref{ass: law_of_large_numbers}). Applying the Csisz{\'a}r--Kullback--Pinsker inequality~\cite[Chapter~22]{Villani2009} we provide an estimate on the \( L^1(\R)\)-norm of the marginals \(\rho^{N,m,\epsilon}\) and \(\rho^{\otimes m ,\epsilon}\) for fix \(m \in \N\).

We emphasize that the method developed in Theorem~\ref{theorem: emp_measure_l2_estimate} can be applied in different settings. Indeed, since we are working on the approximation level, our assumptions are only needed in the regularized setting. Hence, in general the assumptions on \(k\), \(V\) and \(W\) itself can be chosen more irregular, extending even to singular models. We refer to Remark~\ref{remark: general_kernels_l2main_thm} and the applications Section~\ref{sec: application} for more details.
    
\subsection{Relative entropy and modulated energy}

In this section we introduce our main quantities the relative entropy and the modulated free energy. We then show the connection between the \(L^2\)-norm
\begin{equation}\label{eq: l2_norm_intro_sec}
  \E \bigg(   \norm{ V^\epsilon*(\mu_t^{N,\epsilon}-\rho_t^\epsilon)}_{L^2(\R)}^2 \bigg),
\end{equation}
the relative entropy \( {\mathcal H}_N(\rho_t^{N,\epsilon}\vert \rho_t^{\otimes N, \epsilon})\) as well as the expectation of the modulated free energy \({\mathcal K}_{N}(\rho_t^{N,\epsilon}\vert \rho_t^{\otimes N, \epsilon})\).
This can be viewed as a combination of Oelschl\"{a}ger's results on moderated interaction and fluctuations~\cite{Oeschlager1987} and the relative entropy method developed among others by Serfaty, Jabin, Wang, Bresch and Lacker~\cite{JabinWangZhenfu2016,JabinWang2018,BreschDidierJabinWangZhenfu2019,bresch2020,Serfaty_2020,Nguyen2022,RosenzweigSerfaty2023,bresch2023,Lacker2023} for the mean-field setting. The aim is to demonstrate how both concepts connect under the convolution assumption. Finally, we derive an estimate on the relative entropy in terms of the above \(L^2\)-norm.

Following~\cite{BreschDidierJabinWangZhenfu2019}, we introduce the modulated free energy 
\begin{equation*}
  E_{N} \bigg(\rho^{N,\epsilon} \,|\;\rho^{\otimes N, \epsilon}  \bigg)
  := {\mathcal H}_N(\rho^{N,\epsilon} \vert  \rho^{\otimes N, \epsilon} )
  + {\mathcal K}_{N} ( \rho^{N,\epsilon} \vert \rho^{\otimes N, \epsilon} ) ,
\end{equation*}
where
\begin{equation*}
  {\mathcal H}_N(\rho_t^{N,\epsilon}\vert \rho_t^{\otimes N, \epsilon})
  := \frac{1}{N} \int_{\R^{N}} \rho_t^{N,\epsilon}(x_1,\ldots,x_N) \log\Bigl(\frac{\rho_t^{N,\epsilon}(x_1,\ldots,x_N)}{ \rho_t^{\otimes N,\epsilon}(x_1,\ldots,x_N)}\Bigr)
  \, \Id x_1,\ldots,x_N
\end{equation*}
is the relative entropy introduced in~\cite{JabinWangZhenfu2016} and if \(k^\epsilon=(W^\epsilon*V^\epsilon)\) is a potential 
\begin{equation*}
  {\mathcal K}_{N}  (\rho_t^{N,\epsilon}\vert \rho_t^{\otimes N, \epsilon})
  := \frac{1}{\sigma^2} \E \bigg( \int_{\R^2} (W^\epsilon*V^\epsilon)(x-y)\ \Id  (\mu_t^{N,\epsilon} -\rho^{\epsilon}_t)(x)\,\Id  (\mu_t^{N,\epsilon}-\rho^\epsilon_t)(y) \bigg)
\end{equation*}
is the expectation of the modulated energy. We refer  to~\cite{BreschDidierJabinWangZhenfu2019} and the references therein for more details on the modulated free energy.
  
Let us now explore some connections between the relative entropy and the structure presented by Oelschl\"ager~\cite{Oeschlager1987}. We start by rewriting the expectation of the free energy by using our convolution structure. A straightforward calculation shows
\begin{align}\label{eq: expextationmodulatedenergy_connection}
  &{\mathcal K}_{N}  (\rho_t^{N,\epsilon}\vert \rho_t^{\otimes N, \epsilon}) =\; \frac{1}{\sigma^2} \E \bigg( \Big\langle \hat{W}^\epsilon*(\mu_t^{N,\epsilon}-\rho_t^\epsilon), V^\epsilon*(\mu_t^{N,\epsilon}-\rho_t^\epsilon)\Big\rangle \bigg) ,
\end{align}     
where \(\hat{W}(x) = W(-x)\) is the reflection. 
Applying Young's inequality we see that it is enough to control a term of the form 
\begin{align*} 
  \E \bigg( \norm{ V^\epsilon*(\mu_t^{N,\epsilon}-\rho_t^\epsilon)}_{ L^2(\R)}^2 \bigg)
\end{align*} 
for some function \(V^\epsilon\), where we just write \(V^\epsilon\) for simplicity and understand that we can chose \(V^\epsilon=\hat{W}^\epsilon\) in all calculations below. Hence, in order to estimate \( {\mathcal K}_{N}  (\rho_t^{N,\epsilon}\vert \rho_t^{\otimes N, \epsilon})\) we can estimate the \(L^2\)-difference between the convoluted empirical measure and the solution the law of the mean-field limit~\eqref{eq: mean_field_trajectories}. This will be accomplished in Theorem~\ref{theorem: emp_measure_l2_estimate}.

But let us recall that our initial goal is to estimate the relative entropy \( {\mathcal H}_N(\rho_t^{N,\epsilon}\vert \rho_t^{\otimes N, \epsilon})\) and not \( {\mathcal K}_{N}  (\rho_t^{N,\epsilon}\vert \rho_t^{\otimes N, \epsilon})\). Therefore, let us connect the relative free energy to the \(L^2\)-norm of the derivative \(V^\epsilon_x*(\mu_t^{N,\epsilon}-\rho_t^\epsilon)\).

\begin{lemma}\label{lemma: relative_entropy_L_2_estimate}
  Let \(W^\epsilon,V^\epsilon\) be admissible and \(k^\epsilon=W^\epsilon*V^\epsilon\). Then for the non-negative solutions \(\rho^{N,\epsilon} \) of~\eqref{eq: regularized_Liouville_equation} and \(\rho^{\epsilon}\) of~\eqref{eq: regularized_aggregation_diffusion_pde}, it holds $\forall t >0$ that
  \begin{align} 
    \nonumber & {\mathcal H}_N(\rho_t^{N,\epsilon} \vert \rho_t^{\otimes N , \epsilon})
    + \frac{\sigma^2}{4N}  \int^t_0\sum\limits_{i=1}^N \int_{\R^{N}}  \Bigg|  \partial_{x_{i}} \log\Bigl(\frac{\rho_s^{N,\epsilon}(\mathrm{X}^N )}{ \rho_s^{\otimes N,\epsilon}(\mathrm{X}^N )} \Bigr) \Bigg|^2  \rho_s^{N,\epsilon}(\mathrm{X}^N)  \Id  \mathrm{X}^N \Id s \\
     &\quad \le \; \frac{\norm{W^\epsilon}^2_{L^2(\R)}}{\sigma^2}  \E\bigg( \int^t_0 \bigg(   \norm{ V^\epsilon*(\mu_s^{N,\epsilon}-\rho_s^\epsilon)}_{L^2(\R)}^2 \bigg)\Id s\bigg), \label{entropyW}
  \end{align}
\end{lemma}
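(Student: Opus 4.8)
The plan is to run the standard entropy-dissipation (relative entropy method) computation: differentiate $t\mapsto {\mathcal H}_N(\rho_t^{N,\epsilon}\vert\rho_t^{\otimes N,\epsilon})$ in time using the weak formulations of the Liouville equation~\eqref{eq: regularized_Liouville_equation} and of the tensorized equation for $\rho_t^{\otimes N,\epsilon}$, extract the negative Fisher-information–type dissipation term, and bound the remaining drift-difference term by Cauchy–Schwarz and Young's inequality so that half of the dissipation is absorbed on the left. Concretely, I would first recall (or verify via Itô/the weak formulation) that
\[
\frac{\d}{\d t}{\mathcal H}_N(\rho_t^{N,\epsilon}\vert\rho_t^{\otimes N,\epsilon})
= -\frac{\sigma^2}{2N}\sum_{i=1}^N\int_{\R^N}\Big|\partial_{x_i}\log\tfrac{\rho_t^{N,\epsilon}}{\rho_t^{\otimes N,\epsilon}}\Big|^2\rho_t^{N,\epsilon}\,\d\mathrm{X}^N
+\frac{1}{N}\sum_{i=1}^N\int_{\R^N}\Big(\tfrac1N\sum_{j=1}^N k^\epsilon(x_i-x_j)-(k^\epsilon*\rho_t^\epsilon)(x_i)\Big)\cdot\partial_{x_i}\log\tfrac{\rho_t^{N,\epsilon}}{\rho_t^{\otimes N,\epsilon}}\;\rho_t^{N,\epsilon}\,\d\mathrm{X}^N,
\]
the cross term coming precisely from the difference of the two drift fields. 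Then I apply $ab\le \frac{\sigma^2}{4}a^2\cdot\frac{1}{\sigma^2}+\frac{1}{\sigma^2}b^2\cdot\frac{1}{\sigma^2}\cdot\frac{\sigma^2}{\ldots}$ — more precisely Young with weights tuned so that the $|\partial_{x_i}\log(\cdot)|^2$ piece carries coefficient $\tfrac{\sigma^2}{4N}$ — leaving a remainder of the form $\tfrac{1}{\sigma^2 N}\sum_i\int_{\R^N}\big|\tfrac1N\sum_j k^\epsilon(x_i-x_j)-(k^\epsilon*\rho_t^\epsilon)(x_i)\big|^2\rho_t^{N,\epsilon}$, and integrate in time from $0$ to $t$ (the initial entropy vanishes since $\rho_0^{N,\epsilon}=\rho_0^{\otimes N}$).

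The second step is to rewrite that remainder in terms of the empirical measure and then use the convolution structure $k^\epsilon=W^\epsilon*V^\epsilon$. Writing $\tfrac1N\sum_j k^\epsilon(x_i-x_j)=(k^\epsilon*\mu^{N,\epsilon})(x_i)$ on the event where $\mathrm{X}^N$ is the configuration of the particles, one has
\[
\Big|(k^\epsilon*\mu_t^{N,\epsilon})(x_i)-(k^\epsilon*\rho_t^\epsilon)(x_i)\Big|
=\Big|\big(W^\epsilon*V^\epsilon*(\mu_t^{N,\epsilon}-\rho_t^\epsilon)\big)(x_i)\Big|
=\Big|\big(W^\epsilon*\big(V^\epsilon*(\mu_t^{N,\epsilon}-\rho_t^\epsilon)\big)\big)(x_i)\Big|,
\]
and by Young's convolution inequality $\norm{W^\epsilon*g}_{L^\infty}\le\norm{W^\epsilon}_{L^2}\norm{g}_{L^2}$ this is pointwise bounded by $\norm{W^\epsilon}_{L^2(\R)}\norm{V^\epsilon*(\mu_t^{N,\epsilon}-\rho_t^\epsilon)}_{L^2(\R)}$, a bound uniform in $x_i$ and in $i$. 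Since $\rho_t^{N,\epsilon}$ is a probability density, the $\sum_i\int_{\R^N}(\cdots)\rho_t^{N,\epsilon}$ collapses to $N\cdot\norm{W^\epsilon}_{L^2}^2\,\norm{V^\epsilon*(\mu_t^{N,\epsilon}-\rho_t^\epsilon)}_{L^2}^2$, and taking $\E$ (to match the fact that $\mu_t^{N,\epsilon}$ is random, i.e. interpreting the spatial integral against $\rho_t^{N,\epsilon}$ as an expectation over the particle law) yields exactly the right-hand side of~\eqref{entropyW} after dividing by $\sigma^2 N$ and keeping the time integral.

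The main obstacle is the rigorous justification of the entropy time-derivative identity: a priori $\rho_t^{N,\epsilon}$ and $\rho_t^{\otimes N,\epsilon}$ are only weak solutions in $L^2([0,T];H^1)\cap L^\infty([0,T];L^2)$ with $\partial_t\in L^2([0,T];H^{-1})$, so $\log(\rho^{N,\epsilon}/\rho^{\otimes N,\epsilon})$ is not obviously an admissible test function and the Fisher-information term must be shown to be finite before it can be moved to the left-hand side. The standard fix — which I would invoke here, citing the regularity of $k^\epsilon$, the boundedness in space and time of $\rho_t^\epsilon$ from~\cite{Nikolaev2023}, and a regularization of the logarithm (e.g. $\log(\rho^{N,\epsilon}+\delta)-\log(\rho^{\otimes N,\epsilon}+\delta)$ followed by $\delta\to0$ and a monotone/Fatou passage to the limit on the dissipation term) — makes the computation legitimate; the positivity hypothesis on the solutions and the lower/upper bounds on the smooth mean-field density $\rho_t^\epsilon$ are what keep the quotient and its logarithm well controlled. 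Everything else is the routine Young-inequality bookkeeping sketched above.
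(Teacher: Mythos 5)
Your proposal is correct and follows essentially the same route as the paper: differentiate the relative entropy using the two Fokker--Planck structures, absorb half of the Fisher-information dissipation via Young's inequality, rewrite the remaining drift-difference term as $\frac{1}{\sigma^2}\E\big(\langle \mu_t^{N,\epsilon},|k^\epsilon*(\mu_t^{N,\epsilon}-\rho_t^\epsilon)|^2\rangle\big)$, and bound it by $\norm{W^\epsilon}_{L^2}^2\norm{V^\epsilon*(\mu_t^{N,\epsilon}-\rho_t^\epsilon)}_{L^2}^2$ through Young's convolution inequality and the fact that $\mu_t^{N,\epsilon}$ is a probability measure. Your additional remarks on justifying the use of $\log(\rho^{N,\epsilon}/\rho^{\otimes N,\epsilon})$ as a test function address a point the paper passes over silently, and are a welcome supplement rather than a deviation.
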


\begin{proof}
  Let us compute the time derivative of the relative entropy
  \begin{align*}
    &\; \frac{\Id}{\Id t}{\mathcal H}_N (\rho_t^{N,\epsilon} \vert \rho_t^{\otimes N , \epsilon}) \\
    =&\;   \frac{1}{N} \int_{\R^{N}}  \partial_t \rho_t^{N,\epsilon}(\mathrm{X}^N ) \log\Bigl(\frac{\rho_t^{N,\epsilon}(\mathrm{X}^N )}{ \rho_t^{\otimes N,\epsilon}(\mathrm{X}^N )}\Bigr)
    + \partial_t \rho_t^{N,\epsilon}(\mathrm{X}^N )
    - \frac{\rho_t^{N,\epsilon}(\mathrm{X}^N )}{\rho_t^{\otimes N,\epsilon}(\mathrm{X}^N )} \partial_t \rho_t^{\otimes N,\epsilon}(\mathrm{X}^N )\Id \mathrm{X}^N   \\
    =&\;   \frac{1}{N} \int_{\R^{N}} \bigg( \frac{\sigma^2}{2}\sum\limits_{i=1}^N  \partial_{x_ix_i} \rho_t^{N,\epsilon}(\mathrm{X}^N) + \sum\limits_{i=1}^N
    \partial_{x_i} \bigg ( \rho^{N,\epsilon}_t(\mathrm{X}^N) \frac{1}{N} \sum\limits_{j=1}^N  k^{\epsilon} (x_{i} -x_{j} ) \bigg)  \bigg) \log\Bigl(\frac{\rho_t^{N,\epsilon}(\mathrm{X}^N )}{ \rho_t^{\otimes N,\epsilon}(\mathrm{X}^N )}\Bigr)   \\
    & \quad-  \frac{\rho_t^{N,\epsilon}(\mathrm{X}^N )}{\rho_t^{\otimes N,\epsilon}(\mathrm{X}^N )} \sum\limits_{i=1}^N  \frac{\sigma^2}{2}  \partial_{x_ix_i} \rho_{t}^{\otimes N, \epsilon} (\mathrm{X}^N) -  \sum\limits_{i=1}^N  \partial_{x_i} ((k^{\epsilon}*\rho_t^{\epsilon})(x_{i}) \rho_t^{\otimes N, \epsilon}(\mathrm{X}^N)) \Id  \mathrm{X}^N   \\
    =& \;  - \frac{\sigma^2}{2N} \sum\limits_{i=1}^N  \int_{\R^{N}}  \Bigg|  \partial_{x_{i}} \log\Bigl(\frac{\rho_t^{N,\epsilon}(\mathrm{X}^N )}{ \rho_t^{\otimes N,\epsilon}(\mathrm{X}^N )} \Bigr) \Bigg|^2  \rho_t^{N,\epsilon}(\mathrm{X}^N)  \Id  \mathrm{X}^N   \\
    &- \frac{1}{N^2}    \sum\limits_{i,j=1}^N   \int_{\R^{N}}
    \Big(  k^{\epsilon} (x_{i} -x_{j} )  - k^\epsilon*\rho_t^\epsilon(x_{i}) \Big)
    \rho^{N,\epsilon}_t(\mathrm{X}^N)
    \partial_{x_{i}} \log\Bigl(\frac{\rho_t^{N,\epsilon}(\mathrm{X}^N )}{ \rho_t^{\otimes N,\epsilon}(\mathrm{X}^N )} \Bigr)     \Id  \mathrm{X}^N  \\
    \le  &\;   - \frac{\sigma^2}{4N}  \sum\limits_{i=1}^N \int_{\R^{N}}  \Bigg|  \partial_{x_{i}} \log\Bigl(\frac{\rho_t^{N,\epsilon}(\mathrm{X}^N )}{ \rho_t^{\otimes N,\epsilon}(\mathrm{X}^N )} \Bigr) \Bigg|^2  \rho_t^{N,\epsilon}(\mathrm{X}^N)  \Id  \mathrm{X}^N   \\
    &+ \frac{1}{\sigma^2N}    \sum\limits_{i=1}^N   \int_{\R^{N}}
    \bigg| \frac{1}{N}\sum\limits_{j=1}^N  k^{\epsilon} (x_{i} -x_{j} )  - k^\epsilon*\rho_t^\epsilon(x_{i}) \bigg|^2
    \rho^{N,\epsilon}_t(\mathrm{X}^N)
    \Id  \mathrm{X}^N \\
    \le &\;    - \frac{\sigma^2}{4N}  \sum\limits_{i=1}^N \int_{\R^{N}}  \Bigg|  \partial_{x_{i}} \log\Bigl(\frac{\rho_t^{N,\epsilon}(\mathrm{X}^N )}{ \rho_t^{\otimes N,\epsilon}(\mathrm{X}^N )} \Bigr) \Bigg|^2  \rho_t^{N,\epsilon}(\mathrm{X}^N)  \Id  \mathrm{X}^N   + \frac{1}{\sigma^2} \E \bigg( \langle \mu_t^{N,\epsilon}, | k^\epsilon*(\mu_t^{N,\epsilon}-\rho_t^\epsilon)|^2 \rangle\bigg). 
  \end{align*}
  For $k^\epsilon=W^\epsilon*V^\epsilon$ we have further estimates
  \begin{align*}
    &\; \frac{1}{\sigma^2} \E \bigg( \langle \mu_t^{N,\epsilon}, | k^\epsilon*(\mu_t^{N,\epsilon}-\rho_t^\epsilon)|^2 \rangle\bigg)=\frac{1}{\sigma^2} \E \bigg( \langle \mu_t^{N,\epsilon}, |W^\epsilon*V^\epsilon*(\mu_t^{N,\epsilon}-\rho_t^\epsilon)|^2 \rangle\bigg)\\
    &\quad\leq \; \frac{\norm{W^\epsilon}^2_{L^2(\R)}}{\sigma^2}  \E \bigg(   \norm{ V^\epsilon*(\mu_t^{N,\epsilon}-\rho_t^\epsilon)}_{L^2(\R)}^2 \bigg)  \\
  \end{align*}
  Substituting the above estimate into the first inequality, while recalling that \( {\mathcal H}_N (\rho_0^{N,\epsilon} \vert \rho_0^{\otimes N , \epsilon}) =0\), proves the lemma.
\end{proof}

\begin{remark}
  Depending on the regularity of \(V^\epsilon\) and \(W^\epsilon\) one may choose to interchange the roles in the estimate. Generally, one should choose the more regular function to be \(V^\epsilon\). Indeed, in the above estimate we need only the \(L^2\)-norm of \(W^\epsilon\), while later on in Theorem~\ref{theorem: emp_measure_l2_estimate} we need the \(L^\infty\)-norm as well as the \(L^2\)-norm of not only the function \(V^\epsilon\) but also of its derivatives. Moreover, if the force \(k^\epsilon\) is a potential field, the last term has the following structure
  \begin{equation*}
    \E \bigg(   \norm{ V^\epsilon_x*(\mu_t^{N,\epsilon}-\rho_t^\epsilon)}_{L^2(\R)}^2 \bigg),
  \end{equation*}
  which will also be estimated by Theorem~\ref{theorem: emp_measure_l2_estimate}. Hence, we do not lose convergence rates in the case \(k^\epsilon= W^\epsilon*V^\epsilon_x\), but as already mentioned, we obtained an additional estimate on the modulated energy \({\mathcal K}_{N}  (\rho_t^{N,\epsilon}\vert \rho_t^{\otimes N, \epsilon}) \).
\end{remark}

Consequently, by the above discussion, in order to control the relative entropy and the modulated energy in the case \(k^\epsilon\) is a potential field, we need to find an estimate for the \(L^2\)-norm~\eqref{eq: l2_norm_intro_sec}, which was studied in the moderated regime by Oelschläger~\cite{Oeschlager1987} nearly forty years ago.

\subsection{\texorpdfstring{\(L^2\)}{L2}-estimate}

In this section we concentrate on estimating the rest term in the entropy estimate \eqref{entropyW}. 

\medskip

We present the main theorem of the article, which is formulated for a function \(V^\epsilon\), which depends on \(\epsilon\). This presentation is motivation by our case \(k^\epsilon=W^\epsilon*V^\epsilon\). We emphasize that the function in the following Theorem can be chosen independent of \(\epsilon\), but than the estimate has no connection to the modulated energy or the relative entropy (see Lemma~\ref{lemma: relative_entropy_L_2_estimate}). 

\begin{theorem}\label{theorem: emp_measure_l2_estimate}
  Suppose~\eqref{eq: initial_condition_integrability}, the convergence in probability, Assumption~\ref{ass: convergence_in_probability}, and the law of large numbers, Assumption~\ref{ass: law_of_large_numbers} hold both with rates \(\beta,\beta_\alpha,\alpha\) specified therein. Then for any \(V^\epsilon \in H^2(\R)\) the following \(L^2\)-estimate holds
  \begin{align*}
    & \E\bigg(    \sup\limits_{0 \le  t \le T} \norm{V^\epsilon*\mu_t^{N,\epsilon}  -V^\epsilon*\rho_t^{\epsilon}}_{L^2(\R)}^2\bigg) +   \frac{\sigma^2}{8}  \E  \bigg(   \int\limits_{0}^T  \norm{ V_x^\epsilon *\mu_s^{N,\epsilon} - V_x^\epsilon*\rho_s^{\epsilon}}_{L^2(\R)}^2 \Id s \bigg)  \\
    &\quad\le \;  \frac{C}{N} ( \norm{V^\epsilon}_{H^1(\R)}^2 \norm{k^\epsilon}_{L^\infty}^2 + \norm{V^\epsilon_{xx}}_{L^2(\R)}^2) +\frac{C\norm{V^\epsilon}_{H^1(\R)}^2 (1+ \norm{k^\epsilon}_{L^\infty(\R)}^2)}{N^\gamma} \\
    &\quad\quad+\frac{\norm{k^\epsilon_x}_{L^\infty(\R)} \norm{V^\epsilon}_{L^2(\R)}+ \norm{k^\epsilon}_{L^\infty(\R)}^2 \norm{V_x^\epsilon}_{L^2(\R)}^2
    + \norm{V^\epsilon}_{L^2(\R)}^2}{N^{2\alpha }} \\
    &\quad\quad+C\frac{\norm{V_x^\epsilon}_{L^2(\R)}^2 (1+ \norm{k_x^\epsilon}_{L^\infty(\R)}) +  \norm{V_x^\epsilon}_{L^2(\R)} \norm{V^\epsilon_{xx}}_{L^2(\R)} \norm{k^\epsilon}_{L^\infty(\R^d)} }{N^{\alpha+\frac12 }},
  \end{align*}
  where \(C\) depends on $T$, $\sigma$, $\gamma$, $C_{\mathrm{BDG}}$.
\end{theorem}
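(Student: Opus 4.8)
To prove Theorem~\ref{theorem: emp_measure_l2_estimate}, I would work with the $L^2(\R)$-valued process $Z_t^{N,\epsilon} := V^\epsilon*(\mu_t^{N,\epsilon}-\rho_t^\epsilon) = \frac{1}{N}\sum_{i=1}^N V^\epsilon(\cdot-X_t^{i,\epsilon}) - V^\epsilon*\rho_t^\epsilon$ and run an energy estimate on $\norm{Z_t^{N,\epsilon}}_{L^2(\R)}^2$. First, applying It\^o's formula to $x\mapsto V^\epsilon(x-X_t^{i,\epsilon})$ with~\eqref{eq: regularized_particle_system}, averaging over $i$, and subtracting the equation obtained by convolving~\eqref{eq: regularized_aggregation_diffusion_pde} with $V^\epsilon$ (using $V^\epsilon*\partial_x(\rho_t^\epsilon\,k^\epsilon*\rho_t^\epsilon) = V_x^\epsilon*(\rho_t^\epsilon\,k^\epsilon*\rho_t^\epsilon)$), one obtains
\begin{equation*}
  \d Z_t^{N,\epsilon} = \tfrac{\sigma^2}{2}\partial_{xx}Z_t^{N,\epsilon}\,\d t + A_t^{N,\epsilon}\,\d t - \tfrac{\sigma}{N}\sum_{i=1}^N V_x^\epsilon(\cdot-X_t^{i,\epsilon})\,\d B_t^i ,
\end{equation*}
with interaction discrepancy $A_t^{N,\epsilon}(x) = \iint V_x^\epsilon(x-y)k^\epsilon(y-z)\,\d\big(\mu_t^{N,\epsilon}\otimes\mu_t^{N,\epsilon}-\rho_t^\epsilon\otimes\rho_t^\epsilon\big)(y,z)$. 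A second application of It\^o's formula, now to $u\mapsto\norm{u}_{L^2(\R)}^2$ on $L^2(\R)$, yields
\begin{equation*}
  \norm{Z_t^{N,\epsilon}}_{L^2(\R)}^2 + \sigma^2\int_0^t\norm{\partial_x Z_s^{N,\epsilon}}_{L^2(\R)}^2\,\d s = \tfrac{\sigma^2 t}{N}\norm{V_x^\epsilon}_{L^2(\R)}^2 + 2\int_0^t\langle Z_s^{N,\epsilon},A_s^{N,\epsilon}\rangle\,\d s + \mathcal M_t ,
\end{equation*}
the first right-hand term coming from the It\^o bracket of the martingale part and $\mathcal M_t$ a mean-zero local martingale with $\d\langle\mathcal M\rangle_t\le\tfrac{4\sigma^2}{N}\norm{Z_t^{N,\epsilon}}_{L^2}^2\norm{V_x^\epsilon}_{L^2}^2\,\d t$. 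Taking $\sup_{t\le T}$, then expectations, and bounding $\E\sup_{t\le T}|\mathcal M_t|$ by Burkholder--Davis--Gundy (after Young's inequality this absorbs a fraction of $\E\sup_{t\le T}\norm{Z_t^{N,\epsilon}}_{L^2}^2$ and leaves a $C\norm{V_x^\epsilon}_{L^2}^2/N$-contribution), everything reduces to estimating $\E\int_0^T|\langle Z_s^{N,\epsilon},A_s^{N,\epsilon}\rangle|\,\d s$.

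The core is the drift term, which I would treat via the exact splitting $A_s^{N,\epsilon}=R_s^1+R_s^2$ with
\begin{equation*}
  R_s^1(x) := \frac{1}{N}\sum_{i=1}^N V_x^\epsilon(x-X_s^{i,\epsilon})\big(k^\epsilon*\mu_s^{N,\epsilon}-k^\epsilon*\rho_s^\epsilon\big)(X_s^{i,\epsilon}), \qquad R_s^2(x) := \frac{1}{N}\sum_{i=1}^N V_x^\epsilon(x-X_s^{i,\epsilon})\,b_s^\epsilon(X_s^{i,\epsilon}) - \big(V_x^\epsilon*(b_s^\epsilon\rho_s^\epsilon)\big)(x) ,
\end{equation*}
where $b_s^\epsilon:=k^\epsilon*\rho_s^\epsilon$ obeys $\norm{b_s^\epsilon}_{L^\infty}\le\norm{k^\epsilon}_{L^\infty}$, $\norm{\partial_x b_s^\epsilon}_{L^\infty}\le\norm{k_x^\epsilon}_{L^\infty}$ by Young's convolution inequality. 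Pairing with $Z_s^{N,\epsilon}$ and using $\langle Z_s^{N,\epsilon},V_x^\epsilon(\cdot-p)\rangle=-\langle\partial_x Z_s^{N,\epsilon},V^\epsilon(\cdot-p)\rangle$, so that $|\langle Z_s^{N,\epsilon},V_x^\epsilon(\cdot-p)\rangle|\le\norm{\partial_x Z_s^{N,\epsilon}}_{L^2}\norm{V^\epsilon}_{L^2}\wedge\norm{Z_s^{N,\epsilon}}_{L^2}\norm{V_x^\epsilon}_{L^2}$, Young's inequality routes each residual either into the dissipation $\sigma^2\int_0^T\norm{\partial_x Z_s^{N,\epsilon}}_{L^2}^2$ (keeping the stated $\tfrac{\sigma^2}{8}$) or into $\int_0^T\norm{Z_s^{N,\epsilon}}_{L^2}^2$ for a Gronwall step --- with absolute Young constants, so the Gronwall constant depends only on $T,\sigma,\gamma,C_{\mathrm{BDG}}$. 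On the good event $G_s:=\{\sup_r\sup_i|X_r^{i,\epsilon}-Y_r^{i,\epsilon}|<N^{-\alpha}\}\cap B_s^\alpha$ I would, for $R_s^1$, first replace the particle positions $X_s^{i,\epsilon}$ by the i.i.d.\ mean-field particles $Y_s^{i,\epsilon}$ via Assumption~\ref{ass: convergence_in_probability} (the swap costing $\lesssim\norm{k_x^\epsilon}_{L^\infty}N^{-\alpha}$ inside $k^\epsilon$ and $\lesssim\norm{V_{xx}^\epsilon}_{L^2}N^{-\alpha}$ in the translates, since $\norm{V_x^\epsilon(\cdot+h)-V_x^\epsilon}_{L^2}\le|h|\,\norm{V_{xx}^\epsilon}_{L^2}$), and then use that $B_s^\alpha$ from Assumption~\ref{ass: law_of_large_numbers} controls the corresponding empirical interaction fluctuation of the mean-field particles by $N^{-\delta-\alpha}$; this yields the $N^{-2\alpha}$- and part of the $N^{-\alpha-1/2}$-terms. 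For $R_s^2$, after the same reduction to the $Y_s^{i,\epsilon}$, the leading piece $\frac{1}{N}\sum_i V_x^\epsilon(x-Y_s^{i,\epsilon})b_s^\epsilon(Y_s^{i,\epsilon})-\E[V_x^\epsilon(x-Y_s^{1,\epsilon})b_s^\epsilon(Y_s^{1,\epsilon})]$ is a centered linear statistic of i.i.d.\ variables whose second moment in $L^2(\Omega;L^2(\R))$ is $\le N^{-1}\norm{b_s^\epsilon}_{L^\infty}^2\norm{V_x^\epsilon}_{L^2}^2$ by the elementary variance bound, producing the $C\norm{V^\epsilon}_{H^1}^2\norm{k^\epsilon}_{L^\infty}^2/N$-contribution, while the Cauchy--Schwarz cross terms between this $O(N^{-1/2})$-piece and the $O(N^{-\alpha})$-transfer errors produce the remaining $N^{-\alpha-1/2}$-terms, in particular the one carrying $\norm{V_x^\epsilon}_{L^2}\norm{V_{xx}^\epsilon}_{L^2}\norm{k^\epsilon}_{L^\infty}$.

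On the complement $G_s^{\mathrm c}$, which has probability at most $C(\gamma)N^{-\gamma}$ by Assumptions~\ref{ass: convergence_in_probability} and~\ref{ass: law_of_large_numbers}, I would bound $|\langle Z_s^{N,\epsilon},A_s^{N,\epsilon}\rangle|$ crudely using only that $\mu_s^{N,\epsilon},\rho_s^\epsilon$ are probability measures ($\norm{Z_s^{N,\epsilon}}_{L^2}\le2\norm{V^\epsilon}_{L^2}$ and $\norm{A_s^{N,\epsilon}}_{L^2}\le2\norm{V_x^\epsilon}_{L^2}\norm{k^\epsilon}_{L^\infty}$), so that $\E\big[|\langle Z_s^{N,\epsilon},A_s^{N,\epsilon}\rangle|\indicator{G_s^{\mathrm c}}\big]\le C\norm{V^\epsilon}_{H^1}^2(1+\norm{k^\epsilon}_{L^\infty}^2)N^{-\gamma}$, and integration in time gives the $N^{-\gamma}$-term. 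Collecting the $C/N$-contributions (noise bracket, Burkholder--Davis--Gundy step, and the variance of the linear statistic --- which is also where $\norm{V_{xx}^\epsilon}_{L^2}$ enters, through the transfer of the translates), the $N^{-\gamma}$-term, and the $N^{-2\alpha}$- and $N^{-\alpha-1/2}$-terms, and applying Gronwall's inequality to $T'\mapsto\E\sup_{t\le T'}\norm{Z_t^{N,\epsilon}}_{L^2}^2$ with the $O(1)$ constant accumulated above, closes the estimate. I expect the drift term to be the main obstacle: the decomposition of $A_s^{N,\epsilon}$ must be arranged so that every ``bare'' occurrence of $\mu_s^{N,\epsilon}-\rho_s^\epsilon$ --- one not already smoothed into $Z_s^{N,\epsilon}$ or $\partial_x Z_s^{N,\epsilon}$ --- is tied to a genuine source of smallness supplied by Assumption~\ref{ass: convergence_in_probability} or Assumption~\ref{ass: law_of_large_numbers} (the form $R_s^1$, evaluated at the particle positions, being engineered precisely for this, and $R_s^2$ being the term handled by an ordinary law of large numbers for the i.i.d.\ mean-field particles), and one must track carefully which Lipschitz/transfer cost and which Cauchy--Schwarz cross term is responsible for each negative power of $N$, all while leaving no $N$-independent residual behind.
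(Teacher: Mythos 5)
Your proposal is correct in substance and shares the paper's skeleton --- an energy estimate on $V^\epsilon*(\mu^{N,\epsilon}-\rho^\epsilon)$, integration by parts to route residuals into the dissipation, a good/bad event built from Assumptions~\ref{ass: convergence_in_probability} and~\ref{ass: law_of_large_numbers} with a crude $N^{-\gamma}$ bound on the complement, and the i.i.d.\ variance bound for the centered statistic of the mean-field particles --- but it handles the martingale part by a genuinely different route. The paper first strips the stochastic integral out of $V^\epsilon*\mu^{N,\epsilon}$ via the auxiliary object $V^\epsilon\tilde*\mu^{N,\epsilon}$ (Lemma~\ref{lem: aux_reduction_empirical_measure}) and must then estimate, in two separate lemmata, the cross pairings of the drift discrepancy and of $V^\epsilon_{xx}$ against the Brownian integrals $\frac{\sigma}{N}\sum_l\int_0^s V_x^\epsilon(\cdot-X_u^l)\,\d B_u^l$; these cross terms are precisely where the paper's $N^{-\alpha-1/2}$ family originates. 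You instead apply It\^o directly to $\norm{Z_t}_{L^2}^2$, so the noise collapses into one bracket term $\sigma^2 t N^{-1}\norm{V_x^\epsilon}_{L^2}^2$ plus a single local martingale controlled by Burkholder--Davis--Gundy and absorbed by Young --- cleaner, no cross terms, and every contribution lands in the $N^{-1}$, $N^{-2\alpha}$ or $N^{-\gamma}$ buckets (so your attribution of the $N^{-\alpha-1/2}$ terms to ``Cauchy--Schwarz cross terms'' does not match how they actually arise; in your scheme they simply do not appear, which is harmless since for $\alpha<1/2$ they are dominated by the $N^{-2\alpha}$ terms anyway). Your splitting $A=R^1+R^2$ is a regrouping of the paper's $I^1,I^{21},I^{22},I^{23}$. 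Two bookkeeping points to watch: (i) your displayed energy identity omits $\norm{Z_0}_{L^2}^2$, which is nonzero and contributes the $\E\norm{V^\epsilon*(\mu_0^N-\rho_0)}_{L^2}^2\le 2N^{-1}\norm{V^\epsilon}_{L^2}^2$ term (the paper's Initial Value Inequality); (ii) in the translate step you must take the $\norm{\partial_x Z_s}_{L^2}\norm{V^\epsilon(\cdot-X^i)-V^\epsilon(\cdot-Y^i)}_{L^2}$ branch of your ``$\wedge$'' (cost $N^{-\alpha}\norm{V_x^\epsilon}_{L^2}$, absorbed by the dissipation), not the branch costing $N^{-\alpha}\norm{V^\epsilon_{xx}}_{L^2}$ paired with $\norm{Z_s}_{L^2}$, since $\norm{V^\epsilon_{xx}}_{L^2}^2N^{-2\alpha}$ is not dominated by anything on the theorem's right-hand side. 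With those choices made, your argument closes and yields the stated estimate.
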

	
\begin{remark}\label{remark: general_kernels_l2main_thm}
  The only ingredients we need for completing the proof of theorem \ref{theorem: emp_measure_l2_estimate} are the convergence in probability of the particle system \(\mathbf{X}^{N,\epsilon}\) to the mean-field limit \(\mathbf{Y}^{N,\epsilon}\)~\eqref{eq: convergence_in_probability} as well as the law of large numbers~\eqref{eq: lln_small_b_set}. But the convergence in probability and the law of large numbers are known for a variety of interaction force kernels, see for instance~\cite{lazarovici2017mean,Fetecau_2019,HuangLiuLu2019,HuangLiuPickl2020}. Hence, this result can be extended for a variety of interaction force kernels. Moreover, the kernels can also be \(d\)-dimensional since the estimates we used are dimension-free. We refer to Section~\ref{sec: application} for applicable models such as the case with Coulomb force. Actually, the estimates become dimension dependent by the choice of $\beta$. Consequently, the rate of convergence becomes dependent on the dimension. Nevertheless, the steps of the proof work analogously in multi-dimensional setting by replacing the multiplication with the scalar product, the absolute value with the Euclidean norm and the It\^{o}'s formula with its multidimensional counter part.
\end{remark}

\begin{remark}
  The results in Theorem~\ref{theorem: emp_measure_l2_estimate} state that \(\mu_t^{N,\epsilon}\) is close to \(\rho_t^\epsilon\) in the mollified \(L^2\)-norm. By the propagation of chaos we expect that this quantity should be small since \(\mu_t^{N,\epsilon}-\rho_t^\epsilon \) should ideally vanish in the limit. The majority of work, which lies ahead, is to estimate this \(L^2\)-norm with a good rate. In the process we will also obtain an estimate on the derivative \(V^\epsilon_x*(\mu_t^{N,\epsilon}-\rho_t^\epsilon)\). This is no surprise, since the estimate follows the structure of the classic a priori \(L^2\)-estimate for the parabolic equation~\cite[Chapter~3]{WuZhouqunYin2006}. As a result, we obtain in the \(L^2(\P)\)-norm an \(L^\infty([0,T];L^2(\R))\)-bound and as usual an \(L^2([0,T];L^2(\R))\)-bound for the derivative. In combination with Lemma~\ref{lemma: relative_entropy_L_2_estimate} this will allow us to obtain a bound on the relative entropy \({\mathcal{H}}(\rho_t^{N,\epsilon} \vert \rho_t^{\otimes N , \epsilon})\). Additionally, if the interaction force is a potential field we obtain an estimate for \( {\mathcal K}_{N}  (\rho_t^{N,\epsilon}\vert \rho_t^{\otimes N, \epsilon})\) by equality~\eqref{eq: expextationmodulatedenergy_connection}.
\end{remark}

Let us start by describing the dynamic of the empirical measure \(\mu_t^{N,\epsilon}\). Applying It\^{o}'s formula to a sufficiently smooth function \(f\), we obtain
\begin{align*}
  \qv{f, \mu_t^{N,\epsilon}} =&\;  \frac{1}{N} \sum\limits_{i=1}^N f(X_t^{i,\epsilon}) \\
  =  & \; \qv{f, \mu^N_0}
  -  \frac{1}{N} \sum\limits_{i=1}^N \frac{1}{N} \sum\limits_{j=1 }^N \int\limits_0^t f_x(X_s^{i,\epsilon}) k(X_s^{i,\epsilon}-X_s^{j,\epsilon}) \Id s
  + \frac{\sigma}{N} \sum\limits_{i=1}^N  \int\limits_0^t f_x(X_s^{i,\epsilon}) \Id B_s   \\
  &+ \frac{\sigma^2}{2N} \sum\limits_{i=1}^N  \int\limits_0^t f_{xx}(X_s^{i,\epsilon}) \Id s .
\end{align*}
Taking the expectation and using the fact that we have a density of \(\mathbf{X}_s^{N, \epsilon}\), provides a weak formulation of the Lioville equation~\eqref{eq: regularized_Liouville_equation}. If we want to compare it to the mean-field law, we need to make the crucial observation that the stochastic integral in the above equation should vanish after taking the expectation. In other words, we have no term in the regularized PDE~\eqref{eq: regularized_aggregation_diffusion_pde}, which corresponds to the stochastic integral. If the integrand is smooth enough then obviously the stochastic integral vanishes. However, we need to compute the following difference
\begin{equation*}
  \E\bigg( \sup\limits_{0 \le t \le T} \norm{V^\epsilon*\mu_t^{N,\epsilon}  -V^\epsilon
  *\rho_t^\epsilon}_{L^2(\R)}^2\bigg).
\end{equation*}
Therefore, we need somehow transfer the naive approach to the more complex expected value. Applying the above dynamic we prove the following lemma, which allows us to treat the convolution \(V^\epsilon * \mu_t^{N,\epsilon}\) as if the stochastic integral vanishes.

\begin{lemma}\label{lem: aux_reduction_empirical_measure}
  Let \(\mu_t^{N,\epsilon}\) defined by~\eqref{eq: def_emprirical_measure_reg}. Then, we have the following inequality
  \begin{align*}
    &\;  \E\bigg( \sup\limits_{0 \le t \le T} \norm{V^\epsilon*\mu_t^{N,\epsilon}  -V^\epsilon
    *\rho_t^\epsilon}_{L^2(\R)}^2\bigg) \\
    &\quad\le \;   2\E \bigg( \sup\limits_{0 \le t \le T}\int_{\R}  \bigg| \frac{1}{N}  \sum\limits_{i=1}^N \bigg(V^\epsilon(y-X_0^{i})
    + \frac{1}{N} \sum\limits_{j=1}^N \int\limits_{0}^t V^\epsilon_x(y-X_s^{i,\epsilon}) k^\epsilon(X_s^{i,\epsilon}-X_s^{j,\epsilon}) \Id s \\
    &\quad\quad + \frac{\sigma^2}{2} \int\limits_0^t V^\epsilon_{xx}(y-X_t^{i,\epsilon}) \Id s \bigg)  -V^\epsilon*\rho_t^\epsilon(y) \bigg|^2 \Id y \bigg)
    +\frac{2T \sigma^2  C_{\mathrm{BDG}}}{N} \norm{V^\epsilon_x}_{L^2(\R)}^2.
  \end{align*}
\end{lemma}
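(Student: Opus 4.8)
The plan is to start from the It\^{o} expansion of $\qv{V^\epsilon(y-\cdot), \mu_t^{N,\epsilon}}$ written down just above the statement, applied pointwise in $y$ with the test function $f = V^\epsilon(y - \cdot)$, so that $f_x = -V^\epsilon_x(y-\cdot)$, $f_{xx} = V^\epsilon_{xx}(y-\cdot)$. This gives, for each $y \in \R$,
\begin{align*}
  V^\epsilon * \mu_t^{N,\epsilon}(y)
  = \frac{1}{N}\sum_{i=1}^N V^\epsilon(y - X_0^i)
  &+ \frac{1}{N}\sum_{i=1}^N \frac{1}{N}\sum_{j=1}^N \int_0^t V^\epsilon_x(y - X_s^{i,\epsilon}) k^\epsilon(X_s^{i,\epsilon}-X_s^{j,\epsilon})\Id s \\
  &+ \frac{\sigma^2}{2N}\sum_{i=1}^N \int_0^t V^\epsilon_{xx}(y - X_s^{i,\epsilon})\Id s
  + M_t^{N,\epsilon}(y),
\end{align*}
where $M_t^{N,\epsilon}(y) := -\frac{\sigma}{N}\sum_{i=1}^N \int_0^t V^\epsilon_x(y - X_s^{i,\epsilon})\Id B_s^i$ is the martingale part (the sign is immaterial after squaring). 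Subtracting $V^\epsilon * \rho_t^\epsilon(y)$, squaring, integrating in $y$, using $(a+b)^2 \le 2a^2 + 2b^2$ to split off the martingale term, and taking $\sup_{0\le t\le T}$ and then expectation yields exactly the deterministic-looking first term of the claimed bound plus $2\,\E\big(\sup_{0\le t\le T}\int_\R |M_t^{N,\epsilon}(y)|^2\Id y\big)$.

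The remaining task is therefore to bound $\E\big(\sup_{0\le t\le T}\int_\R |M_t^{N,\epsilon}(y)|^2\Id y\big)$ by $\frac{2T\sigma^2 C_{\mathrm{BDG}}}{N}\norm{V^\epsilon_x}_{L^2(\R)}^2$. The plan is to exchange the $y$-integral and the expectation/supremum by Tonelli (everything is nonnegative), so it suffices to bound $\int_\R \E\big(\sup_{0\le t\le T}|M_t^{N,\epsilon}(y)|^2\big)\Id y$. For fixed $y$, $M^{N,\epsilon}_\cdot(y)$ is a continuous martingale; its quadratic variation is $\langle M^{N,\epsilon}(y)\rangle_t = \frac{\sigma^2}{N^2}\sum_{i=1}^N \int_0^t |V^\epsilon_x(y-X_s^{i,\epsilon})|^2\Id s$ since the $B^i$ are independent. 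By the Burkholder--Davis--Gundy inequality,
\begin{equation*}
  \E\Big(\sup_{0\le t\le T}|M_t^{N,\epsilon}(y)|^2\Big) \le C_{\mathrm{BDG}}\,\E\big(\langle M^{N,\epsilon}(y)\rangle_T\big) = \frac{C_{\mathrm{BDG}}\sigma^2}{N^2}\sum_{i=1}^N \E\int_0^T |V^\epsilon_x(y-X_s^{i,\epsilon})|^2\Id s.
\end{equation*}
Integrating in $y$ and applying Tonelli again, $\int_\R |V^\epsilon_x(y-X_s^{i,\epsilon})|^2\Id y = \norm{V^\epsilon_x}_{L^2(\R)}^2$ by translation invariance of Lebesgue measure (a deterministic identity, so the expectation and time integral are trivial), giving $\int_\R\E(\sup_t |M_t^{N,\epsilon}(y)|^2)\Id y \le \frac{C_{\mathrm{BDG}}\sigma^2}{N^2}\cdot N\cdot T\norm{V^\epsilon_x}_{L^2(\R)}^2 = \frac{C_{\mathrm{BDG}}\sigma^2 T}{N}\norm{V^\epsilon_x}_{L^2(\R)}^2$. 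Multiplying by the factor $2$ from the earlier split produces the stated constant.

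The only genuinely delicate point is justifying the interchange of the spatial integral with expectation, supremum and the stochastic integral, i.e. that $M_t^{N,\epsilon}(y)$ really has the asserted quadratic variation uniformly enough in $y$ to integrate — this is where one uses $V^\epsilon \in H^2(\R)$, so that $V^\epsilon_x \in L^2(\R)$ and the map $y \mapsto M^{N,\epsilon}_\cdot(y)$ is an $L^2(\R)$-valued martingale (or, more elementarily, apply a stochastic Fubini theorem to $\int_\R\cdot\Id y$ of the Itô integral). Everything else is Tonelli, BDG, and the translation invariance $\norm{V^\epsilon_x(y-\cdot)}_{L^1_y}$-type identity, all of which are routine; no assumption beyond $V^\epsilon\in H^2(\R)$ and the well-posedness of \eqref{eq: regularized_particle_system} recorded earlier is needed.
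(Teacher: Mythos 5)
Your proposal is correct and follows essentially the same route as the paper: It\^{o}'s formula applied to $V^\epsilon(y-\cdot)$, the elementary split $(a+b)^2\le 2a^2+2b^2$ to isolate the martingale part, then Tonelli, the Burkholder--Davis--Gundy inequality, and translation invariance of the $L^2$-norm to obtain the constant $\frac{2T\sigma^2 C_{\mathrm{BDG}}}{N}\norm{V^\epsilon_x}_{L^2(\R)}^2$. No gaps.
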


\begin{proof}
  We use It\^{o}'s formula, the dynamics~\eqref{eq: regularized_particle_system} and the Burkholder--Davis--Gundy inequality to find
  \begin{align*}
    & \; \E \bigg( \sup\limits_{0 \le t \le T} \norm{V^\epsilon*\mu_t^{N,\epsilon}  -V^\epsilon*\rho_t^\epsilon}_{L^2(\R)}^2 \bigg) \\
     =& \;  \E \bigg( \sup\limits_{0 \le t \le T} \int_{\R} \bigg| \frac{1}{N}  \sum\limits_{i=1}^N V^\epsilon(y-X_s^{i,\epsilon})  -V^\epsilon*\rho_t^\epsilon(y) \bigg|^2 \Id y  \bigg) \\
     =& \; \E \bigg( \sup\limits_{0 \le t \le T}\int_{\R}\bigg| \frac{1}{N}  \sum\limits_{i=1}^N \bigg(V^\epsilon(y-X_0^{i})
    + \frac{1}{N} \sum\limits_{j=1}^N  \int\limits_{0}^t V^\epsilon_x(y-X_s^{i,\epsilon}) k^\epsilon(X_s^{i,\epsilon}-X_s^{j,\epsilon}) \Id s \\
    &\quad + \frac{\sigma^2}{2} \int\limits_0^t V^\epsilon_{xx}(y-X_t^{i,\epsilon}) \Id s
    - \sigma \int\limits_0^t V^\epsilon_x(y-X_s^{i,\epsilon}) \Id B_s^{i} \bigg)
    -V^\epsilon*\rho_t^\epsilon(y) \bigg|^2 \Id y  \bigg) \\
     \le &  \; 2\E \bigg( \sup\limits_{0 \le t \le T}\int_{\R}  \bigg| \frac{1}{N}  \sum\limits_{i=1}^N \bigg(V^\epsilon(y-X_0^{i}) + \frac{1}{N} \sum\limits_{j=1}^N \int\limits_{0}^t V^\epsilon_x(y-X_s^{i,\epsilon}) k^\epsilon(X_s^{i,\epsilon}-X_s^{j,\epsilon}) \Id s  \\
    &\quad + \frac{\sigma^2}{2} \int\limits_0^t V^\epsilon_{xx}(y-X_t^{i,\epsilon}) \Id s \bigg)
    -V^\epsilon*\rho_t^\epsilon(y) \bigg|^2 \Id y \bigg) \\
    & +2 \sigma^2 \E\bigg( \sup\limits_{0 \le t \le T} \int_{\R} \bigg| \frac{1}{N} \sum\limits_{i=1}^N  \int\limits_0^t V^\epsilon_x(y-X_s^{i,\epsilon}) \Id B_s^{i} \bigg|^2 \Id y  \bigg).
  \end{align*}
  It remains to estimate the last term by the Burkholder--Davis--Gundy (BDG) inequality, 
  \begin{align*}
    & 2 \sigma^2 \E\bigg( \sup\limits_{0 \le t \le T} \int_{\R} \bigg| \frac{1}{N} \sum\limits_{i=1}^N  \int\limits_0^t V^\epsilon_x(y-X_s^{i,\epsilon}) \Id B_s^{i} \bigg|^2 \Id y  \bigg) \\
     \le &\;  2  \sigma^2  \int_{\R} \E \bigg( \sup\limits_{0 \le t \le T} \bigg| \frac{1}{N} \sum\limits_{i=1}^N  \int\limits_0^t V^\epsilon_x(y-X_s^{i,\epsilon}) \Id B_s^{i} \bigg|^2  \bigg)  \Id y \\
    \le &\;  2 \sigma^2  C_{\mathrm{BDG}} \int_{\R} \E \bigg( \bigg \langle \frac{1}{N} \sum\limits_{i=1}^N  \int\limits_0^\cdot V^\epsilon_x(y-X_s^{i,\epsilon}) \Id B_s^{i} \bigg\rangle_T  \bigg) \Id y \\
     \le &\;  \frac{2  \sigma^2  C_{\mathrm{BDG}}}{N^2} \int_{\R} \E \bigg(  \sum\limits_{i=1}^N  \int\limits_0^T |V^\epsilon_x(y-X_s^{i,\epsilon})|^2 \Id s  \bigg) \Id y  \le \;  \frac{2T \sigma^2  C_{\mathrm{BDG}}}{N} \norm{V^\epsilon_x}_{L^2(\R)}^2. 
  \end{align*}
  Inserting this calculation into the previous inequality proves the lemma.
\end{proof}

\begin{proof}[Proof of Theorem~\ref{theorem: emp_measure_l2_estimate}]\let\qed\relax
  By Lemma~\ref{lem: aux_reduction_empirical_measure} we can ignore the stochastic integral in the processes \((X^{i},\epsilon, t \ge 0 )\), which determine the empirical measure \(\mu_t^{N,\epsilon}\). Hence, let us write
  \begin{align*}
    &\;  V^\epsilon \tilde{*} \mu_t^{N,\epsilon} (y)  \\
    :=&\;  \frac{1}{N}  \sum\limits_{i=1}^N \bigg(V^\epsilon(y-X_0^{i})+\frac{1}{N} \sum\limits_{j=1}^N \int\limits_{0}^t V^\epsilon_x(y-X_s^{i,\epsilon}) k^\epsilon(X_s^{i,\epsilon}-X_s^{j,\epsilon}) \Id s + \frac{\sigma^2}{2} \int\limits_0^t V^\epsilon_{xx}(y-X_t^{i,\epsilon}) \Id s \bigg)
  \end{align*}
  for the convolution \(V^\epsilon*\mu_t^{N,\epsilon}\) after applying It\^{o}'s formula but without the stochastic integral. Then, we have
  \begin{align*}
    & \;\norm{V^\epsilon \tilde{*} \mu_t^{N,\epsilon}  -V^\epsilon*\rho_t^{\epsilon}}_{L^2(\R)}^2 \\
    = &\;  \norm{V^\epsilon *\mu^N_0  -V^\epsilon*\rho_0}_{L^2(\R)}^2
    + 2 \int\limits_{0}^t   \qv{\partial_s (V^\epsilon \tilde{*}\mu_s^{N,\epsilon}  -V^\epsilon*\rho_s^{\epsilon}), V^\epsilon\tilde{*}\mu_s^{N,\epsilon}  -V^\epsilon*\rho_s^{\epsilon}}_{L^2(\R)} \Id s   ,
  \end{align*}
  where we notice that for the initial time \(t=0\), we have \(V^\epsilon\tilde{*} \mu_0^N =  V^\epsilon* \mu_0^N\) by definition. Let us remark that since all integrands are smooth enough we have \((V^\epsilon \tilde{*} \mu_t^{N,\epsilon})_x = V^\epsilon_x \tilde{*} \mu_t^{N,\epsilon}\). Next, plugging in \(V^\epsilon\tilde{*}\mu_s^{N,\epsilon}\) and differentiate we obtain
  \begin{align*}
    &\;  \qv{\partial_s V^\epsilon\tilde{*} \mu_s^{N,\epsilon}, V^\epsilon\tilde{*} \mu_s^{N,\epsilon}  -V^\epsilon*\rho_s^{\epsilon}}_{L^2(\R)}   \\
    =&\; \bigg \langle
    \frac{1}{N} \sum\limits_{i=1}^N \frac{1}{N} \sum\limits_{j=1}^N V^\epsilon_{x}(\cdot-X_s^{i,\epsilon}) k^\epsilon(X_s^{i,\epsilon}-X_s^{j,\epsilon})  \\
    & \quad + \frac{\sigma^2}{2N} \sum\limits_{i=1}^N  V^\epsilon_{xx}(\cdot-X_s^{i,\epsilon}) , V^\epsilon\tilde{*} \mu_s^{N,\epsilon}  -V^\epsilon*\rho_s^{\epsilon} \bigg \rangle_{L^2(\R)} \\
    = &\;    \bigg \langle \frac{1}{N} \sum\limits_{i=1}^N \frac{1}{N} \sum\limits_{j=1}^N V^\epsilon_{x}(\cdot-X_s^{i,\epsilon}) k^\epsilon(X_s^{i,\epsilon}-X_s^{j,\epsilon}), V^\epsilon\tilde{*} \mu_s^{N,\epsilon}  -V^\epsilon*\rho_s^{\epsilon}  \bigg \rangle_{L^2(\R)} \\
    &  - \frac{\sigma^2}{2} \qv{  V^\epsilon_{x}* \mu_s^{N,\epsilon} , V^\epsilon_x\tilde{*}\mu_s^{N,\epsilon}  -V^\epsilon_x*\rho_s^{\epsilon}}_{L^2(\R)} .
  \end{align*}
  Similar, \(\rho_s^{\epsilon}\) is a weak solution to our PDE~\eqref{eq: regularized_aggregation_diffusion_pde}, which implies
  \begin{align*}
    &\; \qv{\partial_s V^\epsilon*\rho_s^{\epsilon}), V^\epsilon\tilde{*} \mu_s^{N,\epsilon}  -V^\epsilon*\rho_s^{\epsilon}}_{L^2(\R)} \\
    =&\;   \bigg\langle V^\epsilon*\bigg( \frac{\sigma^2}{2} (\rho_s^{\epsilon})_{xx} +  ((k^\epsilon*\rho_s^{\epsilon}) \rho_s^{\epsilon})_x \bigg) , V^\epsilon\tilde{*} \mu_s^{N,\epsilon}  -V^\epsilon*\rho_s^{\epsilon}\bigg\rangle_{L^2(\R)}  \\
    =&\;   - \frac{\sigma^2}{2}  \qv{ V^\epsilon *(\rho_s^{\epsilon})_{x}, V^\epsilon_x\tilde{*} \mu_s^{N,\epsilon}  -V^\epsilon_x*\rho_s^{\epsilon}}_{L^2(\R)}\\
    &+  \qv{ V^\epsilon*((k^\epsilon*\rho_s^{\epsilon}) \rho_s^{\epsilon})_x  , V^\epsilon\tilde{*} \mu_s^{N,\epsilon}  -V^\epsilon*\rho_s^{\epsilon}}_{L^2(\R)}.
  \end{align*}
  Combing the last two calculations, we find
  \begin{align*}
    & \; \norm{V^\epsilon\tilde{*}\mu_t^{N,\epsilon}  -V^\epsilon*\rho_t^{\epsilon}}_{L^2(\R)}^2 \\
    =&\; \norm{V^\epsilon*\mu^N_0  -V^\epsilon*\rho_0}_{L^2(\R)}^2
    - 2 \int\limits_{0}^t  \frac{\sigma^2}{2}  \qv{ V^\epsilon_x *\mu_s^{N,\epsilon} - V^\epsilon_x*\rho_s^{\epsilon} , V^\epsilon_x\tilde{*}\mu_s^{N,\epsilon}  -V^\epsilon_x*\rho_s^{\epsilon}}_{L^2(\R)} \Id s \\
    & \hspace{-0.3em} +  \int\limits_{0}^t \bigg \langle   \frac{2}{N^2} \sum\limits_{i,j=1}^N  V^\epsilon_{x}(\cdot-X_s^{i,\epsilon}) k^\epsilon(X_s^{i,\epsilon}-X_s^{j,\epsilon})  - V^\epsilon_x*((k^\epsilon*\rho_s^{\epsilon}) \rho_s^{\epsilon}) ,
      V^\epsilon\tilde{*}\mu_s^{N,\epsilon}  -V^\epsilon*\rho_s^{\epsilon} \bigg\rangle_{L^2(\R)} \Id s .
  \end{align*}
  The goal is now to insert \(V^\epsilon*\mu_s^{N,\epsilon}\) back into the equation. Hence, for the absorption term we have
  \begin{align*}
    &\; - \int\limits_{0}^t  \frac{\sigma^2}{2}  \qv{ V^\epsilon_x *\mu_s^{N,\epsilon} - V^\epsilon_x*\rho_s^{\epsilon} , V^\epsilon_x\tilde{*}\mu_s^{N,\epsilon}  -V^\epsilon_x*\rho_s^{\epsilon}}_{L^2(\R)} \Id s \\
    =&\; - \int\limits_{0}^t  \frac{\sigma^2}{2}  \norm{ V^\epsilon_x *\mu_s^{N,\epsilon} - V^\epsilon_x*\rho_s^{\epsilon}}_{L^2(\R)}^2 \Id s  \\
    &+ \int\limits_{0}^t  \frac{\sigma^2}{2}  \bigg\langle V^\epsilon_x *\mu_s^{N,\epsilon} - V^\epsilon_x*\rho_s^{\epsilon} ,  \frac{\sigma}{N} \sum\limits_{i=1}^N \int\limits_0^s V^\epsilon_{xx}(\cdot-X_u^{i}) \Id B_u \bigg\rangle_{L^2(\R)} \Id s \\
    \le &\;  -  \int\limits_{0}^t  \frac{\sigma^2}{2}  \norm{ V^\epsilon_x *\mu_s^{N,\epsilon} - V^\epsilon_x*\rho_s^{\epsilon}}_{L^2(\R)}^2 \Id s  + \int\limits_{0}^t  \frac{\sigma^2}{16}  \norm{ V^\epsilon_x *\mu_s^{N,\epsilon} - V^\epsilon_x*\rho_s^{\epsilon}}_{L^2(\R)}^2 \\
    & + 2\sigma^2 \norm{ \frac{\sigma}{N} \sum\limits_{i=1}^N \int\limits_0^s V^\epsilon_{xx}(\cdot-X_u^{i}) \Id B_u }_{L^2(\R)}^2 \Id s \\
    =&\;  -  \int\limits_{0}^t  \frac{7\sigma^2}{16}  \norm{ V^\epsilon_x *\mu_s^{N,\epsilon} - V^\epsilon_x*\rho_s^{\epsilon}}_{L^2(\R)}^2 \Id s  + 2 \sigma^2 \int\limits_{0}^t  \norm{ \frac{\sigma}{N} \sum\limits_{i=1}^N  \int\limits_0^s V^\epsilon_{xx}(\cdot-X_u^{i}) \Id B_u }_{L^2(\R)}^2 \Id s
  \end{align*}
  and for the last term
  \begin{align*}
    & \E \bigg( \sup\limits_{0 \le t \le T}  \int\limits_{0}^t  \bigg \langle  \frac{1}{N^2} \sum\limits_{i,j=1}^N  V^\epsilon_{x}(\cdot-X_s^{i,\epsilon}) k^\epsilon(X_s^{i,\epsilon}-X_s^{j,\epsilon}) - V^\epsilon_x*((k^\epsilon*\rho_s^{\epsilon}) \rho_s^{\epsilon}), \\
    &\quad V^\epsilon\tilde{*}\mu_s^{N,\epsilon}  -V^\epsilon*\rho_s^{\epsilon} \bigg\rangle_{L^2(\R)} \Id s  \bigg)  \\
    \le &\,  \E \bigg( \sup\limits_{0 \le t \le T}  \int\limits_{0}^t \bigg| \bigg \langle  \frac{1}{N^2} \sum\limits_{i,j=1}^N  V^\epsilon_{x}(\cdot-X_s^{i,\epsilon}) k^\epsilon(X_s^{i,\epsilon}-X_s^{j,\epsilon}) - V^\epsilon_x*((k^\epsilon*\rho_s^{\epsilon}) \rho_s^{\epsilon}) , \\
    &\quad   V^\epsilon*\mu_s^{N,\epsilon}  -V^\epsilon*\rho_s^{\epsilon} \bigg\rangle_{L^2(\R)}\bigg| \Id s  \bigg)  + \E \bigg( \sup\limits_{0 \le t \le T}  \int\limits_{0}^t \bigg| \bigg \langle  \frac{1}{N^2} \sum\limits_{i,j=1}^N  V^\epsilon_{x}(\cdot-X_s^{i,\epsilon}) k^\epsilon(X_s^{i,\epsilon}-X_s^{j,\epsilon}) \\
    & \quad - V^\epsilon_x*((k^\epsilon*\rho_s^{\epsilon}) \rho_s^{\epsilon})  ,
    \frac{\sigma}{N}\sum\limits_{l=1}^N  -\int\limits_0^s V^\epsilon_x(\cdot-X_u^{l}) \Id B_u^{l} \bigg\rangle_{L^2(\R)}\bigg| \Id s  \bigg).
  \end{align*}
  Applying Lemma~\ref{lem: aux_reduction_empirical_measure} and put together the above estimates we have shown
  \begin{align}\label{maininequality}
    & \E\bigg(    \sup\limits_{0 \le  t \le T} \norm{V^\epsilon*\mu_t^{N,\epsilon}  -V^\epsilon*\rho_t^{\epsilon}}_{L^2(\R)}^2\bigg)  \\
    \le & \E\bigg(    \sup\limits_{0 \le  t \le T} \norm{V^\epsilon\tilde{*}\mu_t^{N,\epsilon}  -V^\epsilon*\rho_t^{\epsilon}}_{L^2(\R)}^2\bigg) 
    + \frac{2T \sigma^2  C_{\mathrm{BDG}}}{N} \norm{V^\epsilon_x}_{L^2(\R)}^2 \nonumber  \\
    \nonumber \le & \; 2  \E\bigg(    \sup\limits_{0 \le  t \le T} \bigg(  \norm{V^\epsilon*\mu^N_0  -V^\epsilon*\rho_0}_{L^2(\R)}^2
    - \int\limits_{0}^t  \frac{7\sigma^2}{16}  \norm{ V^\epsilon_x *\mu_s^{N,\epsilon} - V^\epsilon_x*\rho_s^{\epsilon}}_{L^2(\R)}^2 \Id s \\
    \nonumber    & +  \int\limits_{0}^t  \bigg|\bigg \langle  \frac{1}{N^2} \sum\limits_{i,j=1}^N  V^\epsilon_{x}(\cdot-X_s^{i,\epsilon}) k^\epsilon(X_s^{i,\epsilon}-X_s^{j,\epsilon}) - V^\epsilon_x*((k^\epsilon*\rho_s^{\epsilon}) \rho_s^{\epsilon}) , \\
    \nonumber    &\quad  V^\epsilon*\mu_s^{N,\epsilon}  -V^\epsilon*\rho_s^{\epsilon} \bigg \rangle_{L^2(\R)} \bigg| \Id s \bigg) \bigg) \\
    \nonumber  &+ 2 \E \bigg( \sup\limits_{0 \le t \le T}  \int\limits_{0}^t \bigg| \bigg \langle  \frac{1}{N^2} \sum\limits_{i,j=1}^N  V^\epsilon_{x}(\cdot-X_s^{i,\epsilon}) k^\epsilon(X_s^{i,\epsilon}-X_s^{j,\epsilon}) - V^\epsilon_x*((k^\epsilon*\rho_s^{\epsilon}) \rho_s^{\epsilon}) , \\
    \nonumber    &\quad \frac{\sigma}{N}\sum\limits_{l=1}^N  -\int\limits_0^s V^\epsilon_x(\cdot-X_u^{l}) \Id B_u^{l} \bigg\rangle_{L^2(\R)}\bigg| \Id s  \bigg)  \\
    \nonumber    &+  4 \sigma^2 \E \bigg( \sup\limits_{0 \le t \le T}   \int\limits_{0}^t  \norm{ \frac{\sigma}{N} \sum\limits_{i=1}^N \int\limits_0^s V^\epsilon_{xx}(\cdot-X_u^{i}) \Id B_u }_{L^2(\R)}^2 \Id s\bigg)
    +\frac{2T \sigma^2  C_{\mathrm{BDG}}}{N} \norm{V^\epsilon_x}_{L^2(\R)}^2 .
  \end{align}
  Now we want to estimate each term on its own. We will split the fourth terms into fourth separate lemmata to keep a readable structure. The theorem follows immediately by combining Lemma~\ref{lemma: l2_theorem_absobation_lemma} and the inequalities~\eqref{eq: main_thm_initial_data_inequality},~\eqref{eq: main_theorem_GNS_absorbation},~\eqref{eq: main_theorem_stochastic_integral} in the lemmata below. We will summarize the estimate after we prove the following lemmata.
\end{proof}

\begin{lemma}[Initial Value Inequality]
  Let the assumptions of Theorem~\ref{theorem: emp_measure_l2_estimate} hold true. Then
  \begin{equation}\label{eq: main_thm_initial_data_inequality}
    \E\bigg( \norm{V^\epsilon*\mu^N_0  -V^\epsilon*\rho_0}_{L^2(\R)}^2 \bigg)  \le \frac{2}{N}  \norm{V^\epsilon}_{L^2(\R)}^2 .
  \end{equation}
\end{lemma}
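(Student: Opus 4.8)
The plan is to read the left-hand side as the variance of an empirical average of i.i.d.\ $L^2(\R)$-valued random elements and to apply the textbook $1/N$ fluctuation bound. First I would rewrite the convolutions pointwise: since $\mu_0^N = \frac1N\sum_{i=1}^N \delta_{X_0^{i}}$ we have $(V^\epsilon*\mu_0^N)(y) = \frac1N\sum_{i=1}^N V^\epsilon(y - X_0^{i})$, and because $X_0^{1},\dots,X_0^{N}$ are i.i.d.\ with density $\rho_0$, $(V^\epsilon*\rho_0)(y) = \E[V^\epsilon(y - X_0^{i})]$ for every $i$. Hence
\[
  (V^\epsilon*\mu_0^N)(y) - (V^\epsilon*\rho_0)(y) = \frac1N\sum_{i=1}^N Z_i(y), \qquad Z_i := V^\epsilon(\cdot - X_0^{i}) - V^\epsilon*\rho_0 ,
\]
where the $Z_i$ are i.i.d.\ random variables valued in $L^2(\R)$ (note $V^\epsilon\in H^2(\R)\subset L^2(\R)$) with $\E Z_i = 0$ in $L^2(\R)$.

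Next I would expand the squared $L^2(\R)$-norm, take expectations, and use Tonelli's theorem to exchange $\E$ and $\int_\R\,\d y$. The cross terms vanish, $\E\langle Z_i,Z_j\rangle_{L^2(\R)} = \langle \E Z_i,\E Z_j\rangle_{L^2(\R)} = 0$ for $i\neq j$, by independence and mean zero, so only the diagonal survives:
\[
  \E\bigg(\norm{V^\epsilon*\mu_0^N - V^\epsilon*\rho_0}_{L^2(\R)}^2\bigg) = \frac{1}{N^2}\sum_{i=1}^N \E\norm{Z_i}_{L^2(\R)}^2 = \frac1N\, \E\norm{Z_1}_{L^2(\R)}^2 .
\]
Finally I would estimate $\E\norm{Z_1}_{L^2(\R)}^2 \le \E\norm{V^\epsilon(\cdot - X_0^{1})}_{L^2(\R)}^2 + \norm{V^\epsilon*\rho_0}_{L^2(\R)}^2$; the first term equals $\norm{V^\epsilon}_{L^2(\R)}^2$ by translation invariance of Lebesgue measure, and by Young's convolution inequality together with Assumption~\ref{ass: initial condition} the second is $\le \norm{V^\epsilon}_{L^2(\R)}^2\norm{\rho_0}_{L^1(\R)}^2 = \norm{V^\epsilon}_{L^2(\R)}^2$. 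Combining the three displays yields the claimed bound $\tfrac2N\norm{V^\epsilon}_{L^2(\R)}^2$.

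There is no serious obstacle here; this is the baseline initial-data fluctuation estimate, and the sharp constant is in fact $\tfrac1N(\norm{V^\epsilon}_{L^2(\R)}^2 - \norm{V^\epsilon*\rho_0}_{L^2(\R)}^2)$, so the stated inequality has room to spare. The only points deserving a word of care are the measurability and integrability needed to justify Tonelli (ensured by $V^\epsilon\in L^2(\R)$ and finiteness of the relevant second moments) and the use that $\rho_0$ is a genuine probability density, so that $(V^\epsilon*\rho_0)(y)$ really is the mean of $V^\epsilon(y - X_0^{1})$ and $\norm{\rho_0}_{L^1(\R)} = 1$.
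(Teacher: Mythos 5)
Your proposal is correct and is essentially the same argument as the paper's: both reduce the left-hand side to the variance identity $\tfrac1N\big(\norm{V^\epsilon}_{L^2(\R)}^2-\norm{V^\epsilon*\rho_0}_{L^2(\R)}^2\big)$ via the i.i.d.\ structure of the initial data (the paper expands the square pointwise and uses Tonelli, you phrase it as centered $L^2(\R)$-valued variables whose cross terms vanish), and both then discard the sign of the second term and apply Young's inequality with $\norm{\rho_0}_{L^1(\R)}=1$ to reach the bound $\tfrac2N\norm{V^\epsilon}_{L^2(\R)}^2$. No gaps.
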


\begin{proof}
  We compute
  \begin{align*}
    &\;   \E\bigg(    \norm{V^\epsilon*\mu^N_0  -V^\epsilon*\rho_0}_{L^2(\R)}^2  \bigg) \\
    =& \; \int_{\R}  \E \bigg(  (V^\epsilon*\mu^N_0(y))^2  -2 V^\epsilon*\mu^N_0(y) V^\epsilon*\rho_0(y) + (V^\epsilon*\rho_0(y))^2 \bigg) \Id y  \\
    =&\;  \int_{\R}  \frac{1}{N^2}\sum\limits_{i,j=1}^N \E \bigg(  V^\epsilon(y-X_0^{i}) V^\epsilon(y-X_0^{j}) \bigg) - \frac{2}{N} \sum\limits_{i=1}^N \E \bigg(  V^\epsilon(y-X_0^{i}) \bigg) V^\epsilon*\rho_0(y) \\
    &\quad +  (V^\epsilon*\rho_0(y))^2 \Id y\\
    =& \int_{\R}  \frac{N^2-N}{N^2} (V^\epsilon*\rho_0(y))^2 + \frac{1}{N} (V^\epsilon)^2*\rho_0(y)  - (V^\epsilon*\rho_0(y))^2 \Id y\\
    =&\; \frac{1}{N}\int_{\R} (V^\epsilon)^2*\rho_0(y)  - (V^\epsilon*\rho_0(y))^2 \Id y\\
    \le & \;  \frac{1}{N} \big( \norm{(V^\epsilon)^2*\rho_0}_{L^1(\R)} + \norm{V^\epsilon*\rho_0}_{L^2(\R)}^2 \big) \le \;  \frac{2}{N}  \norm{V^\epsilon}_{L^2(\R)}^2 \norm{\rho_0}_{L^1(\R)},
  \end{align*}
  where we used the fact that the initial particles are i.i.d. and Young's inequality for convolutions in the last step.
\end{proof}

\begin{lemma}[Absorbation Inequality]\label{lemma: l2_theorem_absobation_lemma}
  Let the assumptions of Theorem~\ref{theorem: emp_measure_l2_estimate} hold true. Then
  \begin{align*}
  \begin{split}
    &\; \E \bigg(    \sup\limits_{0 \le  t \le T}  \int\limits_{0}^t - \frac{7\sigma^2}{16}  \norm{ V^\epsilon_x *\mu_s^{N,\epsilon} - V^\epsilon_x*\rho_s^{\epsilon}}_{L^2(\R)}^2   \\
    &\quad+ \bigg| \bigg \langle  \frac{1}{N^2} \sum\limits_{i,j=1}^N  V^\epsilon_{x}(\cdot-X_s^{i,\epsilon}) k^\epsilon(X_s^{i,\epsilon}-X_s^{j,\epsilon}) - V^\epsilon_x*((k^\epsilon*\rho_s^{\epsilon}) \rho_s^{\epsilon}) , V^\epsilon*(\mu_s^{N,\epsilon} -\rho_s^{\epsilon} ) \bigg \rangle_{L^2(\R)} \bigg|\Id s \bigg)  \\
    &\le \;    \frac{16T\norm{k_x^\epsilon}_{L^\infty(\R)}^2 \norm{V^\epsilon}_{L^2(\R)}^2}{\sigma^2 N^{2\alpha}}   +\frac{4T\norm{V^\epsilon}_{L^2(\R)}^2}{\sigma^2N^{2(\alpha+\delta)}} \\
    &\quad+\Big(\frac{4\norm{   V^\epsilon_x}_{L^2(\R)}^2}{N^{2\alpha} \sigma^2}  +  \frac{16  \norm{V^\epsilon}_{L^2(\R)}^2 }{N\sigma^2} \Big)  \int\limits_0^T \norm{k^\epsilon*\rho_s^{\epsilon}}_{L^\infty(\R)}^2  \Id s \\
    & \quad+  \frac{C(\gamma)T}{N^\gamma}\Big(\norm{ V^\epsilon }_{L^2(\R)}^2  \norm{k^\epsilon}_{L^\infty(\R)}^2 +  \norm{ V^\epsilon_x}_{L^2(\R)}^2 \Big) \\
    &\quad -   \frac{\sigma^2}{8}  \E  \bigg(    \sup\limits_{0 \le  t \le T}  \int\limits_{0}^t  \norm{ V_x^\epsilon *\mu_s^{N,\epsilon} - V_x^\epsilon*\rho_s^{\epsilon}}_{L^2(\R)}^2 \Id s \bigg)  .
  \end{split}
  \end{align*}
\end{lemma}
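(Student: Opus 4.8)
\emph{Plan.} I would first recognise the double sum as a convolution: since \(\frac1N\sum_j k^\epsilon(X_s^{i,\epsilon}-X_s^{j,\epsilon})=(k^\epsilon*\mu_s^{N,\epsilon})(X_s^{i,\epsilon})\), the first slot of the inner product equals \(V^\epsilon_x*\bigl[(k^\epsilon*\mu_s^{N,\epsilon})\mu_s^{N,\epsilon}-(k^\epsilon*\rho_s^\epsilon)\rho_s^\epsilon\bigr]=(V^\epsilon*h_s)_x\), where \(h_s:=(k^\epsilon*\mu_s^{N,\epsilon})\mu_s^{N,\epsilon}-(k^\epsilon*\rho_s^\epsilon)\rho_s^\epsilon\). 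I would then split the difference of products as \(h_s=\bigl(k^\epsilon*(\mu_s^{N,\epsilon}-\rho_s^\epsilon)\bigr)\mu_s^{N,\epsilon}+(k^\epsilon*\rho_s^\epsilon)(\mu_s^{N,\epsilon}-\rho_s^\epsilon)=:h_s^{(1)}+h_s^{(2)}\), integrate by parts to move the \(x\)-derivative onto \(V^\epsilon*(\mu_s^{N,\epsilon}-\rho_s^\epsilon)\) (so that \(V^\epsilon_x*\mu_s^{N,\epsilon}-V^\epsilon_x*\rho_s^\epsilon\) appears), and apply Cauchy--Schwarz followed by Young's inequality with weight \(5\sigma^2/16\) to get \(|\langle(V^\epsilon*h_s)_x,V^\epsilon*(\mu_s^{N,\epsilon}-\rho_s^\epsilon)\rangle|\le\tfrac{5\sigma^2}{16}\norm{V^\epsilon_x*\mu_s^{N,\epsilon}-V^\epsilon_x*\rho_s^\epsilon}_{L^2(\R)}^2+\tfrac4{5\sigma^2}\norm{V^\epsilon*h_s}_{L^2(\R)}^2\). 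Since \(-\tfrac{7\sigma^2}{16}+\tfrac{5\sigma^2}{16}=-\tfrac{\sigma^2}{8}<0\), the whole integrand is pointwise in \(s\) dominated by \(-\tfrac{\sigma^2}{8}\norm{V^\epsilon_x*\mu_s^{N,\epsilon}-V^\epsilon_x*\rho_s^\epsilon}_{L^2(\R)}^2+\tfrac4{5\sigma^2}\norm{V^\epsilon*h_s}_{L^2(\R)}^2\); carrying the negative term along, the claim reduces to bounding \(\E\int_0^T\norm{V^\epsilon*h_s^{(1)}}_{L^2(\R)}^2+\norm{V^\epsilon*h_s^{(2)}}_{L^2(\R)}^2\,\d s\) by the positive right-hand side terms.

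\emph{The term \(h_s^{(1)}\).} Writing \(V^\epsilon*h_s^{(1)}=\tfrac1N\sum_i V^\epsilon(\cdot-X_s^{i,\epsilon})\,(k^\epsilon*(\mu_s^{N,\epsilon}-\rho_s^\epsilon))(X_s^{i,\epsilon})\), the triangle inequality and translation invariance of \(\norm{\cdot}_{L^2(\R)}\) give \(\norm{V^\epsilon*h_s^{(1)}}_{L^2(\R)}\le\norm{V^\epsilon}_{L^2(\R)}\max_i\bigl|(k^\epsilon*(\mu_s^{N,\epsilon}-\rho_s^\epsilon))(X_s^{i,\epsilon})\bigr|\). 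On the event \(A_N^s:=\{\sup_{u\le T}\sup_i|X_u^{i,\epsilon}-Y_u^{i,\epsilon}|\le N^{-\alpha}\}\cap B_s^\alpha\) I would swap every \(X\) for the corresponding \(Y\) inside the kernel and inside \(k^\epsilon*\rho_s^\epsilon\) (using \(|k^\epsilon(x)-k^\epsilon(y)|\le\norm{k^\epsilon_x}_{L^\infty}|x-y|\) and \(\norm{(k^\epsilon*\rho_s^\epsilon)_x}_{L^\infty}\le\norm{k^\epsilon_x}_{L^\infty}\)), at cost \(C\norm{k^\epsilon_x}_{L^\infty}N^{-\alpha}\), and then invoke Assumption~\ref{ass: law_of_large_numbers} to bound \(\max_i|\tfrac1N\sum_j k^\epsilon(Y_s^{i,\epsilon}-Y_s^{j,\epsilon})-(k^\epsilon*\rho_s^\epsilon)(Y_s^{i,\epsilon})|\le N^{-(\alpha+\delta)}\); on \((A_N^s)^{\mathrm c}\) I would use the trivial bound \(2\norm{k^\epsilon}_{L^\infty}\) together with \(\P((A_N^s)^{\mathrm c})\le C(\gamma)N^{-\gamma}\), which holds by Assumptions~\ref{ass: convergence_in_probability} and~\ref{ass: law_of_large_numbers}. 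This yields \(\E\norm{V^\epsilon*h_s^{(1)}}_{L^2(\R)}^2\lesssim\norm{V^\epsilon}_{L^2(\R)}^2\bigl(\norm{k^\epsilon_x}_{L^\infty}^2N^{-2\alpha}+N^{-2(\alpha+\delta)}\bigr)+C(\gamma)N^{-\gamma}\norm{k^\epsilon}_{L^\infty}^2\norm{V^\epsilon}_{L^2(\R)}^2\), which after \(\d s\)-integration produces the first, second and part of the \(N^{-\gamma}\) terms of the claimed bound.

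\emph{The term \(h_s^{(2)}\).} Here the point is that \(\mu_s^{N,\epsilon}\) is built from the coupled particles \(\mathbf X^{N,\epsilon}\), not from i.i.d.\ variables, so I would interpose the empirical measure of the i.i.d.\ mean-field particles: \(V^\epsilon*h_s^{(2)}=\bigl[V^\epsilon*\bigl((k^\epsilon*\rho_s^\epsilon)\mu_s^{N,\epsilon}\bigr)-\tfrac1N\sum_i V^\epsilon(\cdot-Y_s^{i,\epsilon})(k^\epsilon*\rho_s^\epsilon)(Y_s^{i,\epsilon})\bigr]+\bigl[\tfrac1N\sum_i V^\epsilon(\cdot-Y_s^{i,\epsilon})(k^\epsilon*\rho_s^\epsilon)(Y_s^{i,\epsilon})-V^\epsilon*\bigl((k^\epsilon*\rho_s^\epsilon)\rho_s^\epsilon\bigr)\bigr]\). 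The first bracket is handled by the \(X\to Y\) swap on \(A_N^s\), via \(\norm{V^\epsilon(\cdot-x)-V^\epsilon(\cdot-y)}_{L^2(\R)}\le|x-y|\norm{V^\epsilon_x}_{L^2(\R)}\) and the Lipschitz bound on \(k^\epsilon*\rho_s^\epsilon\), at cost \((\norm{k^\epsilon*\rho_s^\epsilon}_{L^\infty}\norm{V^\epsilon_x}_{L^2(\R)}+\norm{k^\epsilon_x}_{L^\infty}\norm{V^\epsilon}_{L^2(\R)})N^{-\alpha}\) on \(A_N^s\) and \(C(\gamma)N^{-\gamma}\norm{k^\epsilon}_{L^\infty}^2\norm{V^\epsilon}_{L^2(\R)}^2\) on its complement. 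The second bracket is a pure fluctuation of i.i.d.\ variables with common law \(\rho_s^\epsilon\); exactly as in the Initial Value Inequality computation, its expected squared \(L^2(\R)\)-norm equals \(\int_\R\mathrm{Var}\bigl(\tfrac1N\sum_i V^\epsilon(y-Y_s^{i,\epsilon})(k^\epsilon*\rho_s^\epsilon)(Y_s^{i,\epsilon})\bigr)\,\d y\le\tfrac1N\norm{k^\epsilon*\rho_s^\epsilon}_{L^\infty}^2\norm{V^\epsilon}_{L^2(\R)}^2\). Collecting these estimates and integrating over \(s\in[0,T]\) produces the term carrying the factor \(\int_0^T\norm{k^\epsilon*\rho_s^\epsilon}_{L^\infty}^2\,\d s\) and finishes the bound.

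\emph{Main obstacle.} The genuine difficulty is precisely that \(\mu_s^{N,\epsilon}\) is not an empirical measure of independent samples: neither Assumption~\ref{ass: law_of_large_numbers} (stated for \(\mathbf Y^{N,\epsilon}\)) nor a variance bound applies to \(\mathbf X^{N,\epsilon}\) directly, and the only available bridge is the swap to \(\mathbf Y^{N,\epsilon}\) on the high-probability event of Assumption~\ref{ass: convergence_in_probability}. The bookkeeping is where the work lies: one must make each swap error land on the correct term, carefully distinguishing \(\norm{k^\epsilon_x}_{L^\infty}\), \(\norm{k^\epsilon*\rho_s^\epsilon}_{L^\infty}\), \(\norm{V^\epsilon}_{L^2(\R)}\) and \(\norm{V^\epsilon_x}_{L^2(\R)}\) and the exponents \(N^{-2\alpha}\), \(N^{-2(\alpha+\delta)}\), \(N^{-1}\), \(N^{-\gamma}\), and one must take the Young weight small enough (\(5\sigma^2/16<7\sigma^2/16\)) that a strictly negative multiple of \(\norm{V^\epsilon_x*\mu_s^{N,\epsilon}-V^\epsilon_x*\rho_s^\epsilon}_{L^2(\R)}^2\) survives and can be recorded on the right-hand side.
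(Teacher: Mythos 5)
Your proposal is correct and follows essentially the same route as the paper's proof: the same splitting of \(\Omega\) into the good event of Assumptions~\ref{ass: convergence_in_probability} and~\ref{ass: law_of_large_numbers} and its small complement, the same \(X\to Y\) swaps paid for with \(\norm{k^\epsilon_x}_{L^\infty}N^{-\alpha}\) and \(\norm{V^\epsilon_x}_{L^2}N^{-\alpha}\), the same i.i.d.\ variance computation for the pure fluctuation term, and the same Young absorption into the dissipation. Your grouping \(h_s=h_s^{(1)}+h_s^{(2)}\) is only a harmless rearrangement of the paper's decomposition into \(I^1,I^{21},I^{22},I^{23}\) (it costs one extra Lipschitz swap \((k^\epsilon*\rho_s^\epsilon)(X_s^{i,\epsilon})\to(k^\epsilon*\rho_s^\epsilon)(Y_s^{i,\epsilon})\), which lands on a term of the same form as the first one on the right-hand side), and taking a single Young weight \(5\sigma^2/16\) up front instead of four weights \(\sigma^2/16\) changes only the constants.
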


\begin{proof}
  Before we begin the proof of this lemma, we will provide an overview of our approach. Our main strategy is to utilize  the convergence in probability of the particle \(X_t^{i,\epsilon}\) to their mean-field limit \(Y_t^{i,\epsilon}\) (Assumption~\ref{ass: convergence_in_probability}) in combination with the law of large numbers (Assumption~\ref{ass: law_of_large_numbers}). This implies that the "bad set", where the particles are apart is small in probability with arbitrary algebraic convergence rate. Therefore, we may assume that \(X_t^{i,\epsilon}\) is close to \(Y_t^{i,\epsilon}\), and we formally replace the empirical measure of \((X_t^{i,\epsilon}, i =1, \ldots, N)\) with the empirical measure associated with \((Y_t^{i,\epsilon},i=1,\ldots,N)\). However, \((Y_t^{i,\epsilon},i=1,\ldots,N)\) has more desirable properties. For instance, the particles are independent and have density \(\rho_t^{\epsilon} \in L^1(\R)\) and often even \(\rho_t^\epsilon \in L^\infty(\R)\). This allows us to apply the law of large numbers~\eqref{eq: lln_small_b_set}, which ultimately proves the claim.
 
  Let us start by splitting our probability space \(\Omega\) into two sets. On one set \( B_s^{\alpha}\) the particles are close to the mean-field particles in probability and ``satisfy'' the law of large numbers. The other set we take as the complement \((B_s^{\alpha})^{\mathrm{c}}\), which has small probability by inequalities~\eqref{eq: convergence_in_probability} and~\eqref{eq: lln_small_b_set}.

  More precisely, we have
  \begin{align}\label{Balpha}
    B_s^{\alpha}
    &=  \bigg\{ \omega \in \Omega \colon \max\limits_{i=1,\ldots,N} | X^{i,\epsilon}_s(\omega) - Y^{i,\epsilon}_s(\omega) | \le N^{-\alpha} \bigg \} \nonumber \\
    &\quad \cap \bigg\{ \omega \in \Omega \colon \max\limits_{i=1,\ldots,N} \bigg|\frac{1}{N} \sum\limits_{j=1}^N k^\epsilon(Y_s^{i,\epsilon}(\omega) -Y_s^{j,\epsilon}(\omega)) - (k^\epsilon*\rho_s^{\epsilon})(Y_s^{i,\epsilon}(\omega)) \bigg| \le N^{-(\alpha+\delta)}  \bigg\}
  \end{align}
  for some \(\delta > 0\) such that \(0< \alpha + \delta < 1/2 \) and we have the estimate \(\P((B_s^{\alpha})^{\mathrm{c}}) \le C(\gamma) N^{-\gamma} \) for all \(\gamma > 0\) by~~\eqref{eq: convergence_in_probability} and~\eqref{eq: lln_small_b_set}. Let us rewrite the last Lebesgue integral on the left-hand side of our claim as follows
  \begin{align*}
    & \E\bigg( \sup\limits_{0 \le t \le T} \int\limits_0^t  \bigg|\bigg \langle  \frac{1}{N^2} \sum\limits_{i,j=1}^N  V^\epsilon_{x}(\cdot-X_s^{i,\epsilon}) k^\epsilon(X_s^{i,\epsilon}-X_s^{j,\epsilon}) - V^\epsilon_x*((k^\epsilon*\rho_s^{\epsilon}) \rho_s^{\epsilon}) , \\
    &\qquad  V^\epsilon*\mu_s^{N,\epsilon}  -V^\epsilon*\rho_s^{\epsilon} \bigg \rangle_{L^2(\R)}\bigg| \Id s \bigg)  \\
    &\quad\le \; \E\bigg( \sup\limits_{0 \le t \le T} \int\limits_0^t  \bigg|\bigg \langle  \frac{1}{N^2} \sum\limits_{i,j=1}^N  V^\epsilon_{x}(\cdot-X_s^{i,\epsilon}) k^\epsilon(X_s^{i,\epsilon}-X_s^{j,\epsilon}) - V^\epsilon_x*((k^\epsilon*\rho_s^{\epsilon}) \rho_s^{\epsilon})   , \\
    &\qquad V^\epsilon*\mu_s^{N,\epsilon}  -V^\epsilon*\rho_s^{\epsilon} \bigg \rangle_{L^2(\R)}  \bigg|\Big(\indicator{(B_s^\alpha)}+\indicator{(B_s^\alpha)^{\mathrm{c}}}\Big)\Id s  \bigg).
  \end{align*}
  We are going to estimate each term by itself.

  \textit{On the set \(B_s^\alpha\):} In order to estimate the first term above we let \(\omega \in B_s^\alpha\) and will not write the indicator function. Then we have
  \begin{align*}
    &  \hspace{-0.3em}\frac{1}{	N^2} \sum\limits_{i,j=1}^N \Big\langle  V^\epsilon_x ( \cdot-X_s^{i,\epsilon}(\omega)) k^\epsilon(X_s^{i,\epsilon}(\omega)-X_s^{j,\epsilon}(\omega))- V^\epsilon_x*((k^\epsilon*\rho_s^{\epsilon}) \rho_s^{\epsilon}))  ,   V^\epsilon*(\mu_s^{N,\epsilon}(\omega) -\rho_s^{\epsilon}) \Big\rangle_{L^2} \\
    = &\;  \frac{1}{N^2} \sum\limits_{i,j=1}^N \Big\langle  V^\epsilon_x ( \cdot-X_s^{i,\epsilon}(\omega)) (k^\epsilon(X_s^{i,\epsilon}(\omega)-X_s^{j,\epsilon}(\omega))-k^\epsilon(Y_s^{i,\epsilon}(\omega)-Y_s^{j,\epsilon}(\omega)))  ,\\
    &\quad\quad  V^\epsilon*(\mu_s^{N,\epsilon}(\omega) -\rho_s^{\epsilon}) \Big\rangle_{L^2(\R)} \\
    &  + \frac{1}{	N^2} \sum\limits_{i,j=1}^N \Big\langle  V^\epsilon_x ( \cdot-X_s^{i,\epsilon}(\omega))   k^\epsilon(Y_s^{i,\epsilon}(\omega)-Y_s^{j,\epsilon}(\omega))
    - V^\epsilon_x*((k^\epsilon*\rho_s^{\epsilon}) \rho_s^{\epsilon}))  , \\
    &\quad\quad  V^\epsilon*(\mu_s^{N,\epsilon}(\omega) -\rho_s^{\epsilon}) \Big\rangle_{L^2(\R)} \\
    = &\; I_s^1(\omega) + I_s^2(\omega) .
  \end{align*}

  For the first term we obtain
  \begin{align}
   \nonumber |I_s^1(\omega)| =
    &\; \bigg| \frac{1}{N} \sum\limits_{j=1}^N \bigg\langle   \frac{1}{	N} \sum\limits_{i=1}^N  V^\epsilon ( \cdot-X_s^{i,\epsilon}(\omega)) (k^\epsilon(X_s^{i,\epsilon}(\omega)-X_s^{j,\epsilon}(\omega))-k^\epsilon(Y_s^{i,\epsilon}(\omega)-Y_s^{j,\epsilon}(\omega))) ,  \\
    \nonumber  & \qquad V^\epsilon_x*( \mu_s^{N,\epsilon}(\omega) -\rho_s^{\epsilon}) \bigg\rangle_{L^2(\R)}\bigg| \\
    \nonumber  \le & \; \frac{1}{N^2} \sum\limits_{i,j=1}^N \bigg\langle   | (V^\epsilon ( \cdot-X_s^{i,\epsilon}(\omega))| \max\limits_{1\le i \le N} |k^\epsilon(X_s^{i,\epsilon}(\omega)-X_s^{j,\epsilon}(\omega))-k^\epsilon(Y_s^{i,\epsilon}(\omega)-Y_s^{j,\epsilon}(\omega))| ,  \\
    \nonumber  & \;  |V^\epsilon_x*( \mu_s^{N,\epsilon}(\omega) -\rho_s^{\epsilon}| \bigg\rangle_{L^2(\R)}   \\
    \nonumber    \le &  \frac{2 }{N} \sum\limits_{i=1}^N \bigg\langle \norm{k_x^\epsilon}_{L^\infty(\R)} | V^\epsilon ( \cdot-X_s^{i,\epsilon}(\omega))| \max\limits_{1\le i \le N} |X^{i,N}_s -Y_s^{i,N}|  ,  | V^\epsilon_x *(\mu_s^{N,\epsilon}(\omega) -\rho_s^{\epsilon})| \bigg\rangle_{L^2}   \\
    \nonumber  \le &\;    \frac{2 }{N}\sum\limits_{i=1}^N  \langle    N^{-\alpha} \norm{k_x^\epsilon}_{L^\infty(\R)}   | V^\epsilon ( \cdot-X_s^{i,\epsilon}(\omega))|,            |V^\epsilon_x*( \mu_s^{N,\epsilon}(\omega) -\rho_s^{\epsilon}| \rangle_{L^2(\R)}   \\
    \nonumber   \le & \;     \frac{2}{N} \sum\limits_{i=1}^N \int_{\R}  \frac{8\norm{k_x^\epsilon}_{L^\infty(\R)}^2 }{\sigma^2N^{2\alpha}}| V^\epsilon ( y-X_s^{i,\epsilon}(\omega))|^2 \Id y  + \frac{\sigma^2}{32} \int_{\R} |V^\epsilon_x * (\mu_s^{N,\epsilon}(\omega)-\rho_s^{\epsilon})(y)|^2 \Id y  \\
    \label{I1}  \le & \;     \frac{16}{\sigma^2 N^{2\alpha}}  \norm{k_x^\epsilon}_{L^\infty(\R)}^2 \norm{V^\epsilon}_{L^2(\R)}^2 +\frac{\sigma^2}{16} \norm{ V^\epsilon_x * (\mu_s^{N,\epsilon}(\omega)-\rho_s^{\epsilon})}_{L^2(\R)}^2.
  \end{align}
  Here we used integration by parts in the first step, the property of the set \(B_s^\alpha\) in the fourth step. As always, we neglect the last term by absorbing it into the diffusion in our statement.

  We treat the term \(I_s^2(\omega)\) using the law of large numbers property of the second term in \(B_s^\alpha\). For \(\omega \in B_s^\alpha \) we rewrite
  \begin{align}\label{I2}
    | I_s^2(\omega)|
    &= \Big|\frac{1}{	N^2} \sum\limits_{i,j=1}^N \Big\langle
    V^\epsilon_x ( \cdot-X_s^{i,\epsilon}(\omega))   k^\epsilon(Y_s^{i,\epsilon}(\omega)-Y_s^{j,\epsilon}(\omega))
    - V^\epsilon_x*((k^\epsilon*\rho_s^{\epsilon}) \rho_s^{\epsilon}))  , \nonumber  \\
    &\qquad  V^\epsilon*(\mu_s^{N,\epsilon}(\omega) -\rho_s^{\epsilon}) \Big\rangle_{L^2(\R)} \Big|\nonumber \\
    &= \Big|\frac{1}{	N^2} \sum\limits_{i,j=1}^N \Big\langle
    V^\epsilon ( \cdot-X_s^{i,\epsilon}(\omega))   k^\epsilon(Y_s^{i,\epsilon}(\omega)-Y_s^{j,\epsilon}(\omega))
    - V^\epsilon*((k^\epsilon*\rho_s^{\epsilon}) \rho_s^{\epsilon}))  ,  \nonumber \\
    &\qquad V^\epsilon_x*(\mu_s^{N,\epsilon}(\omega) -\rho_s^{\epsilon}) \Big\rangle_{L^2(\R)} \Big|\nonumber \\
    &= \Big| \frac{1}{	N^2} \sum\limits_{i,j=1}^N \Big\langle
    V^\epsilon ( \cdot-X_s^{i,\epsilon}(\omega))  ( k^\epsilon(Y_s^{i,\epsilon}(\omega)-Y_s^{j,\epsilon}(\omega))- (k^\epsilon*\rho_s^{\epsilon})(Y_s^{i,\epsilon}(\omega)) ) \nonumber \\
    & + ( V^\epsilon ( \cdot-X_s^{i,\epsilon}(\omega)) -V^\epsilon( \cdot-Y_s^{i,\epsilon}(\omega))) (k^\epsilon*\rho_s^{\epsilon})(Y_s^{i,\epsilon}(\omega))   \nonumber  \\
    & + V^\epsilon( \cdot-Y_s^{i,\epsilon}(\omega))(k^\epsilon*\rho_s^{\epsilon})(Y_s^{i,\epsilon}(\omega))
    - V^\epsilon*((k^\epsilon*\rho_s^{\epsilon}) \rho_s^{\epsilon}))  , V^\epsilon_x*(\mu_s^{N,\epsilon}(\omega) -\rho_s^{\epsilon})\Big\rangle_{L^2(\R)}\Big| \nonumber \\
    &= |I_s^{21}(\omega)| + |I_s^{22}(\omega)|+ |I_s^{23}(\omega)| .
  \end{align}

  For the first term \(I_s^{21}(\omega)\) we obtain
  \begin{align}\label{I21}
    \nonumber | I_s^{21}(\omega) |&\le \frac{1}{N} \sum\limits_{i=1}^N \bigg\langle
    |V^\epsilon ( \cdot-X_s^{i,\epsilon}(\omega))|  \bigg| \frac{1}{N} \sum\limits_{j=1}^N k^\epsilon(Y_s^{i,\epsilon}(\omega)-Y_s^{j,\epsilon}(\omega))- (k^\epsilon*\rho_s^{\epsilon})(Y_s^{i,\epsilon}(\omega)) \bigg|  , \\
    \nonumber &\quad |V^\epsilon_x*(\mu_s^{N,\epsilon}(\omega) -\rho_s^{\epsilon})| \bigg\rangle_{L^2(\R)} \\
    \nonumber &\le  \frac{1}{N} \sum\limits_{i=1}^N \langle
    N^{-(\alpha+\delta)}|V^\epsilon ( \cdot-X_s^{i,\epsilon}(\omega))|   , |V^\epsilon_x*(\mu_s^{N,\epsilon}(\omega) -\rho_s^{\epsilon})| \rangle_{L^2(\R)} \\
    &\le \frac{4N^{-2(\alpha+\delta)}}{\sigma^2} \norm{V^\epsilon}_{L^2(\R)}^2 +\frac{\sigma^2}{16} \norm{ V^\epsilon_x * (\mu_s^{N,\epsilon}(\omega)-\rho_s^{\epsilon})}_{L^2(\R)}^2,
  \end{align}
  where we used the property of the set \(B_s^\alpha\) in the second step and Young's inequality.

  Using the fact that we are still on the set \(B_s^\alpha\) we obtain for the second term \(I_s^{22}(\omega)\) the following estimate
  \begin{align}\label{I22}
    \nonumber |I_s^{22}(\omega)|
    \le &\; \frac{4\norm{k^\epsilon*\rho_s^{\epsilon}}_{L^\infty(\R)}^2}{N \sigma^2}   \sum\limits_{i=1}^N \int_{\R} |V^\epsilon ( y-X_s^{i,\epsilon}(\omega)) -  V^\epsilon ( y-Y_s^{i,\epsilon}(\omega)) )|^2  \Id y  \\
    \nonumber  &\qquad+ \frac{\sigma^2}{16} \int_{\R} |V^\epsilon_x*(\mu_s^{N,\epsilon}(\omega) -\rho_s^{\epsilon}) (y)|^2 \Id y  \\
    \nonumber  =&\; \frac{4\norm{k^\epsilon*\rho^\epsilon_s}_{L^\infty(\R)}^2}{N \sigma^2}   \sum\limits_{i=1}^N \int_{\R} \bigg|\int\limits_0^1 \frac{\Id}{\Id r}  V^\epsilon(y-Y_s^{i,\epsilon}(\omega) + r(Y_s^{i,\epsilon}(\omega)-X_s^{i,\epsilon}(\omega)))  \Id r \bigg|^2  \Id y \\
    \nonumber   &\qquad + \frac{\sigma^2}{16} \int_{\R} |V^\epsilon_x*(\mu_s^{N,\epsilon}(\omega) -\rho_s^{\epsilon}) (y)|^2 \Id y  \\
    \nonumber   &\le\; \frac{4\norm{k^\epsilon*\rho_s^\epsilon}_{L^\infty(\R)}^2}{ \sigma^2} \max_{1\leq i\leq N}| Y_s^{i,\epsilon}(\omega)-X_s^{i,\epsilon}(\omega)|^2   \\
    \nonumber   &\qquad\qquad\cdot \frac{1}{N}\sum\limits_{i=1}^N  \int\limits_0^1 \int_{\R}  \Big|\frac{\Id }{\Id x}  V^\epsilon(y-Y_s^{i,\epsilon}(\omega) + r(Y_s^{i,\epsilon}(\omega)-X_s^{i,\epsilon}(\omega)))\Big|^2  \Id y   \Id r   \\
    \nonumber   &\qquad + \frac{\sigma^2}{16} \int_{\R} |V^\epsilon_x*(\mu_s^{N,\epsilon}(\omega) -\rho_s^{\epsilon}) (y)|^2 \Id y  \\
    \nonumber    &= \; \frac{4\norm{k^\epsilon*\rho_s^\epsilon}_{L^\infty(\R)}^2}{\sigma^2}\max_{1\leq i\leq N} | Y_s^{i,\epsilon}(\omega)-X_s^{i,\epsilon}(\omega)|^2  \int\limits_0^1 \int_{\R}  |  V^\epsilon_z(z)|^2  \Id z   \Id r  \\
    \nonumber    &\quad+  \frac{\sigma^2}{16} \int_{\R} |V^\epsilon_x*(\mu_s^{N,\epsilon}(\omega) -\rho_s^{\epsilon}) (y)|^2 \Id y  \\
    &\le\; \frac{4\norm{k^\epsilon*\rho_s^\epsilon}_{L^\infty(\R)}^2}{N^{2\alpha} \sigma^2}  \norm{   V^\epsilon_x}_{L^2(\R)}^2  + \frac{\sigma^2}{16} \norm{ V^\epsilon_x*(\mu_s^{N,\epsilon}(\omega) -\rho_s^{\epsilon})}_{L^2(\R)}^2.
  \end{align}
  In the above calculations we used Young's inequality in the first step, Jensen inequality in the second estimate, the property of the set \(B_s^{\alpha}\) in the third estimate.

  In order to estimate the last term \(I_s^{23}(\omega)\) in~\eqref{I2} we use the independence of our mean-field particles \((Y_t^{i,\epsilon}, i = 1, \ldots, N )\). Hence, we can no longer do the estimates pathwise and need to take advantage of the expectation. First, applying Young's inequality we find
  \begin{align*}
     |I_s^{23}(\omega)| & \le \;  \frac{4}{\sigma^2}  \int_{\R} \frac{1}{N^2}
    \bigg|\sum\limits_{i=1}^N V^\epsilon( y-Y_s^{i,\epsilon}(\omega))(k^\epsilon*\rho_s^{\epsilon})(Y_s^{i,\epsilon}(\omega))
    - V^\epsilon*((k^\epsilon*\rho_s^{\epsilon}) \rho_s^{\epsilon}))(y) \bigg|^2 \Id y \\
    &\quad \quad +  \frac{\sigma^2}{16} \norm{ V^\epsilon_x*(\mu_s^{N,\epsilon}(\omega) -\rho_s^{\epsilon})}_{L^2(\R)}^2 .
  \end{align*}
  As always, the last term is going to be absorbed. For the first term, we recall that our statement has an supremum over all \(0 \le t \le T\) and an expectation. Hence, it is enough to estimate
  \begin{align*}
    &\;  \E\bigg( \sup\limits_{0 \le t \le T} \int\limits_0^t \frac{4}{\sigma^2}  \int_{\R} \frac{1}{N^2}
    \bigg|\sum\limits_{i=1}^N V^\epsilon( y-Y_s^{i,\epsilon}(\omega))(k^\epsilon*\rho_s^{\epsilon})(Y_s^{i,\epsilon}(\omega))
    -V^\epsilon*((k^\epsilon*\rho_s^{\epsilon}) \rho_s^{\epsilon}))(y) \bigg|^2 \Id y \bigg)\\
    =&  \; \int\limits_0^T \frac{4}{N^2\sigma^2}  \int_{\R} \E \bigg(
    \bigg|\sum\limits_{i=1}^N V^\epsilon( y-Y_s^{i,\epsilon}(\omega))(k^\epsilon*\rho_s^{\epsilon})(Y_s^{i,\epsilon}(\omega))
    - V^\epsilon*((k^\epsilon*\rho_s^{\epsilon}) \rho_s^{\epsilon}))(y) \bigg|^2  \bigg) \Id y   .
  \end{align*}
  Let us denote for fix \(y \in \R\)
  \begin{equation*}
    Z^{i}_s(\omega) := V^\epsilon( y-Y_s^{i,\epsilon}(\omega))(k^\epsilon*\rho_s^{\epsilon})(Y_s^{i,\epsilon}(\omega))
    - V^\epsilon*((k^\epsilon*\rho_s^{\epsilon}) \rho_s^{\epsilon}))(y) .
  \end{equation*}
  Then we notice that
  \begin{align*}
    \E(Z^{i}_s)
    &= \E(V^\epsilon( y-Y_s^{i,\epsilon})(k^\epsilon*\rho_s^{\epsilon})(Y_s^{i,\epsilon}) ) - V^\epsilon*((k^\epsilon*\rho_s^{\epsilon}) \rho_s^{\epsilon}))(y) \\
    &= \int_{\R} V^\epsilon( y-z)(k^\epsilon*\rho_s^{\epsilon})(z ) \rho_s^{\epsilon}(z) \Id z
    - V^\epsilon*((k^\epsilon*\rho_s^{\epsilon}) \rho_s^{\epsilon}))(y) = 0 .
  \end{align*}
  Furthermore, we have the random variables \((Z_s^{i}, i =1, \ldots, N)\) are pairwise independent. Hence, if \(i \neq j \) we find
  \begin{equation*}
    \E(Z^{i}_s Z^{j}_s)  = \E(Z^{i}_s)  \E(Z^{j}_s)  = 0.
  \end{equation*}

  We notice that we have
  \begin{align*}
    &\; \E \bigg(
    \bigg|\sum\limits_{i=1}^N V^\epsilon( y-Y_s^{i,\epsilon}(\omega))(k^\epsilon*\rho_s^{\epsilon})(Y_s^{i,\epsilon}(\omega))
    - V^\epsilon*((k^\epsilon*\rho_s^{\epsilon}) \rho_s^{\epsilon}))(y) \bigg|^2  \bigg)\\
    &\quad=\;  \E \bigg(
    \bigg|\sum\limits_{i=1}^N Z_s^{i} \bigg|^2  \bigg)
    = \sum\limits_{i=1}^N \E(|Z^{i}_s|^2 )  .
  \end{align*}
  On the other hand by using the trivial inequality \((a+b)^2 \le 2(a^2+b^2)\) and Young's inequality for convolution we obtain
  \begin{align*}
    \int_{\R} \E(|Z^{i}_s|^2 )
    &\le 2 \E \bigg( \int_{\R} |V^\epsilon( y-Y_s^{i,\epsilon}(\omega))(k^\epsilon*\rho_s^{\epsilon})(Y_s^{i,\epsilon}(\omega)) |^2
    +|V^\epsilon*((k^\epsilon*\rho_s^{\epsilon}) \rho_s^{\epsilon}))(y)|^2 \Id y \bigg) \\
    &\le 2 \norm{k^\epsilon*\rho_s^{\epsilon}}_{L^\infty(\R)}^2 \norm{V^\epsilon}_{L^2(\R)}^2 + 2 \norm{V^\epsilon}_{L^2(\R)}^2 \norm{(k^\epsilon*\rho_s^{\epsilon}) \rho_s^{\epsilon}}_{L^1(\R)}^2 \\
    &= 4\norm{k^\epsilon*\rho_s^{\epsilon}}_{L^\infty(\R)}^2  \norm{V^\epsilon}_{L^2(\R)}^2 .
  \end{align*}
  Hence, the estimate for $I^{23}$ follows by the previous law of large numbers argument and is obtained in the following
  \begin{align}\label{I23}
    & \; \E\bigg( \sup\limits_{0 \le t \le T} \int\limits_0^t |I^{23}_s(\omega)| \Id s \bigg)\\
    \nonumber &\quad\le  \; \frac{\sigma^2}{16} \int\limits_0^t \norm{ V^\epsilon_x*(\mu_s^{N,\epsilon}(\omega) -\rho_s^{\epsilon})}_{L^2(\R)}^2 + \frac{16  \norm{V^\epsilon}_{L^2(\R)}^2 }{N\sigma^2}   \int\limits_0^T \norm{k^\epsilon*\rho_s^{\epsilon}}_{L^\infty(\R)}^2  \Id s  .
  \end{align}

  By combining the estimates \eqref{I21}\eqref{I22}\eqref{I23} with \eqref{I2} and \eqref{I1} we obtain the estimate on the set $B^\alpha_s$
  \begin{align}\label{I2est}
    &\; \E\bigg( \sup\limits_{0 \le t \le T} \int\limits_0^t (|I^1_s(\omega)|+|I^2_s(\omega)|)\indicator{(B_s^\alpha)}\Id s\bigg) \\
    \nonumber  \le  &\;   - \frac{3\sigma^2}{16} \int\limits_0^T \norm{ V^\epsilon_x*(\mu_s^{N,\epsilon}(\omega) -\rho_s^{\epsilon})}_{L^2(\R)}^2\Id s  +\frac{16T\norm{k_x^\epsilon}_{L^\infty(\R)} \norm{V^\epsilon}_{L^2(\R)}^2}{\sigma^2 N^{2\alpha}}   +\frac{4T\norm{V^\epsilon}_{L^2(\R)}^2}{\sigma^2N^{2(\alpha+\delta)}} \\
    & \nonumber +\Big(\frac{4\norm{   V^\epsilon_x}_{L^2(\R)}^2}{N^{2\alpha} \sigma^2}  +  \frac{16  \norm{V^\epsilon}_{L^2(\R)}^2 }{N\sigma^2} \Big)  \int\limits_0^T \norm{k^\epsilon*\rho_s^{\epsilon}}_{L^\infty(\R)}^2  \Id s
  \end{align}
  It remains to obtain an estimate on the complement of \(B_s^\alpha\).

  \textit{On the set \((B_s^\alpha)^{\mathrm{c}}\):} Applying Young's inequality, multiple Hölder's inequalities, the fact that \(\P((B_s^\alpha)^{\mathrm{c}}) \le C(\gamma)N^{-\gamma}\), we obtain
  \begin{align*}
    & \E \bigg( \sup\limits_{0\le t \le T} \int\limits_0^t \indicator{(B_s^\alpha)^{\mathrm{c}}} \Big|\frac{1}{	N^2} \sum\limits_{i,j=1}^N \Big\langle  V^\epsilon_x ( \cdot-X_s^{i,\epsilon}(\omega)) k^\epsilon(X_s^{i,\epsilon}(\omega)-X_s^{j,\epsilon}(\omega))- V^\epsilon_x*((k^\epsilon*\rho_s^{\epsilon}) \rho_s^{\epsilon}))  , \\
    & \quad\;   V^\epsilon*(\mu_s^{N,\epsilon}(\omega) -\rho_s^{\epsilon})\Big\rangle_{L^2(\R)}\Big| \Id s  \bigg) \\
    \le &  \;   \frac{1}{N^2} \sum\limits_{i,j=1}^N \E \bigg( \sup\limits_{0\le t \le T} \int\limits_0^t \indicator{(B_s^\alpha)^{\mathrm{c}}} \Big|\Big\langle  V^\epsilon ( \cdot-X_s^{i,\epsilon}(\omega)) k^\epsilon(X_s^{i,\epsilon}(\omega)-X_s^{j,\epsilon}(\omega))- V^\epsilon*((k^\epsilon*\rho_s^{\epsilon}) \rho_s^{\epsilon})) , \\
    & \quad V^\epsilon_x*(\mu_s^{N,\epsilon}(\omega) -\rho_s^{\epsilon}) \Big\rangle_{L^2(\R)}\Big| \Id s  \bigg) \\
    \le &  \; \frac{1}{N^2} \sum\limits_{i,j=1}^N
    \E\bigg( \int\limits_0^T \indicator{(B_s^\alpha)^{\mathrm{c}}} \Big( \norm{ V^\epsilon ( \cdot-X_s^{i,\epsilon}(\omega)) k^\epsilon(X_s^{i,\epsilon}(\omega)-X_s^{j,\epsilon}(\omega))}_{L^2(\R)}^2 \\
    &\quad + \norm{ V^\epsilon*((k^\epsilon*\rho_s^{\epsilon}) \rho_s^{\epsilon})) }_{L^2(\R)}^2 \Big)\Id s  \bigg)
    +  \frac{1}{2} \E \bigg(  \int\limits_0^T \indicator{(B_s^\alpha)^{\mathrm{c}}} \norm{ V^\epsilon_x*(\mu_s^{N,\epsilon}(\omega) -\rho_s^{\epsilon})}_{L^2(\R)}^2 \Id s \bigg) \\
    \le &  \;  \frac{1}{ N} \sum\limits_{i=1}^N
    \E\bigg( \int\limits_0^T  \indicator{(B_s^\alpha)^{\mathrm{c}}} \Big( \norm{ V^\epsilon(\cdot-X_s^{i,\epsilon}(\omega)) }_{L^2(\R)}^2 \norm{k^\epsilon}_{L^\infty}^2 \\
    &\quad+ \norm{ V^\epsilon}_{L^2(\R)}^2 \norm{k^\epsilon*\rho_s^{\epsilon}}_{L^\infty(\R)}^2 \norm{ \rho_s^{\epsilon} }_{L^1(\R)}^2 \Big) \Id s \bigg)
    +  2 \int\limits_0^T \E \bigg( \indicator{(B_s^\alpha)^{\mathrm{c}}} \norm{ V^\epsilon_x}_{L^2(\R)}^2 \bigg) \Id s  \\
    \le & \; 2 \int\limits_0^T   \P \Big((B_s^\alpha)^{\mathrm{c}} \Big) \Big(\norm{ V^\epsilon }_{L^2(\R)}^2  \norm{k^\epsilon}_{L^\infty(\R)}^2 +  \norm{ V^\epsilon_x}_{L^2(\R)}^2 \Big) \Id s  \\
    \le & \;  \frac{C(\gamma)T}{N^\gamma}\Big(\norm{ V^\epsilon }_{L^2(\R)}^2  \norm{k^\epsilon}_{L^\infty(\R)}^2 +  \norm{ V^\epsilon_x}_{L^2(\R)}^2 \Big) .
  \end{align*}
  Combined with the estimate on the set $B^\alpha_s$ , we obtained the result.
\end{proof}

\begin{lemma}[Stochastic Remaining Term Inequality]
  Let the assumptions of Theorem~\ref{theorem: emp_measure_l2_estimate} hold true. Then
  \begin{align}\label{eq: main_theorem_GNS_absorbation}
    &  \E \bigg( \sup\limits_{0 \le t \le T}  \int\limits_{0}^t \bigg| \bigg \langle  \frac{1}{N^2} \sum\limits_{i,j=1}^N  V^\epsilon_{x}(\cdot-X_s^{i,\epsilon}) k^\epsilon(X_s^{i,\epsilon}-X_s^{j,\epsilon}) - V^\epsilon_x*((k^\epsilon*\rho_s^{\epsilon}) \rho_s^{\epsilon}) , \nonumber \\
    & \quad\quad \frac{\sigma}{N}\sum\limits_{l=1}^N  - \int\limits_0^s V^\epsilon_x(\cdot-X_u^{l}) \Id B_u^{l}  \bigg\rangle_{L^2(\R)}\bigg| \Id s  \bigg)  \nonumber \\
   \le & \;  \frac{2 \sigma T^\frac{3}{2} C_{\mathrm{BDG}}^{\frac{1}{2}}}{N^{\alpha+\frac12} }   \norm{V^\epsilon_x}_{L^2(\R)}^2  \norm{k^\epsilon_x}_{L^\infty(\R)}+\sigma\frac{C_{\mathrm{BDG}}^{\frac{1}{2}}T^\frac{3}{2} }{N^{\alpha+\delta+\frac12}}\norm{V^\epsilon_x}_{L^2(\R)}^2\\
    \nonumber &+ \Big(\sigma  \frac{C_{\mathrm{BDG}  }^{\frac{1}{2}}\norm{V^\epsilon_{xx} }_{L^2(\R)}}{N^{\alpha+\frac12}}  \norm{V^\epsilon_x}_{L^2(\R)}  +\sigma \frac{2 C_{\mathrm{BDG}}^{\frac{1}{2}}\norm{V^\epsilon_x}_{L^2(\R)}^2 }{N}\Big)\int\limits_0^T \norm{k^\epsilon*\rho_s^\epsilon}_{L^\infty(\R)} s^{\frac{1}{2}} \Id s.\nonumber \\
    & + \frac{2 C(\gamma) C_{\mathrm{BDG} }^{\frac{1}{2}}  \sigma}{N^{\frac12+\gamma}} \norm{V^\epsilon_x}_{L^2(\R)}^2\Big(\norm{k^\epsilon}_{L^\infty(\R)} \frac{2}{3} T^\frac{3}{2}+ \int\limits_0^T \norm{k^\epsilon*\rho_s^\epsilon}_{L^\infty(\R)} s^{\frac{1}{2}} \Id s \Big).  \nonumber
  \end{align}
\end{lemma}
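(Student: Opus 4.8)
The plan is to apply the Cauchy--Schwarz inequality in the spatial variable $y\in\R$ in order to split off the martingale factor. Write
\[
  A_s := \frac{1}{N^2}\sum_{i,j=1}^N V^\epsilon_x(\cdot-X_s^{i,\epsilon})\,k^\epsilon(X_s^{i,\epsilon}-X_s^{j,\epsilon}) - V^\epsilon_x*\big((k^\epsilon*\rho_s^\epsilon)\rho_s^\epsilon\big), \qquad M_s := -\frac{\sigma}{N}\sum_{l=1}^N\int_0^s V^\epsilon_x(\cdot-X_u^{l,\epsilon})\,\Id B_u^l ,
\]
so that the left-hand side of the claim is $\E\big(\sup_{0\le t\le T}\int_0^t |\langle A_s,M_s\rangle_{L^2(\R)}|\,\Id s\big)$. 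Since the integrand is nonnegative the supremum over $t$ is just $\int_0^T|\langle A_s,M_s\rangle_{L^2}|\,\Id s$, and by Fubini it suffices to bound $\int_0^T\E|\langle A_s,M_s\rangle_{L^2}|\,\Id s\le\int_0^T\E\big(\|A_s\|_{L^2}\|M_s\|_{L^2}\big)\,\Id s$. For the martingale factor, independence of the driving Brownian motions makes all cross terms in the quadratic (co)variation vanish, so It\^{o}'s isometry together with translation invariance of the $L^2$-norm gives $\E\|M_s\|_{L^2(\R)}^2\le C_{\mathrm{BDG}}\tfrac{\sigma^2 s}{N}\|V^\epsilon_x\|_{L^2(\R)}^2$; this is precisely what produces the weight $s^{1/2}N^{-1/2}$ everywhere on the right-hand side.

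For the factor $A_s$ the plan is to reuse the decomposition from the proof of Lemma~\ref{lemma: l2_theorem_absobation_lemma}: split $\Omega$ via $\indicator{B_s^\alpha}+\indicator{(B_s^\alpha)^{\mathrm{c}}}$ with $B_s^\alpha$ as in~\eqref{Balpha}, and on $B_s^\alpha$ write $A_s=A_s^1+A_s^{21}+A_s^{22}+A_s^{23}$, where $A_s^1$ replaces $k^\epsilon(X_s^{i,\epsilon}-X_s^{j,\epsilon})$ by $k^\epsilon(Y_s^{i,\epsilon}-Y_s^{j,\epsilon})$ (gaining $\|k^\epsilon_x\|_{L^\infty}N^{-\alpha}$ from the Lipschitz bound and the convergence in probability), $A_s^{21}$ replaces $\tfrac1N\sum_j k^\epsilon(Y_s^{i,\epsilon}-Y_s^{j,\epsilon})$ by $(k^\epsilon*\rho_s^\epsilon)(Y_s^{i,\epsilon})$ (gaining $N^{-(\alpha+\delta)}$ from the law of large numbers on $B_s^\alpha$), $A_s^{22}$ replaces $V^\epsilon_x(\cdot-X_s^{i,\epsilon})$ by $V^\epsilon_x(\cdot-Y_s^{i,\epsilon})$ (gaining $\|V^\epsilon_{xx}\|_{L^2}\|k^\epsilon*\rho_s^\epsilon\|_{L^\infty}N^{-\alpha}$ from the fundamental theorem of calculus and translation invariance, exactly as for $I_s^{22}$ there), and $A_s^{23}=\tfrac1N\sum_i Z_s^i$ with $Z_s^i(y):=V^\epsilon_x(y-Y_s^{i,\epsilon})(k^\epsilon*\rho_s^\epsilon)(Y_s^{i,\epsilon})-\big(V^\epsilon_x*((k^\epsilon*\rho_s^\epsilon)\rho_s^\epsilon)\big)(y)$ is the remaining de Finetti term. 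Note that, in contrast to Lemma~\ref{lemma: l2_theorem_absobation_lemma}, no integration by parts is performed here, so only $\|V^\epsilon_x\|_{L^2}$ and $\|V^\epsilon_{xx}\|_{L^2}$ occur. For $A_s^1,A_s^{21},A_s^{22}$ the bounds on $B_s^\alpha$ are deterministic, so $\E\big(\indicator{B_s^\alpha}\|A_s^k\|_{L^2}\|M_s\|_{L^2}\big)$ factorizes; multiplying by $(\E\|M_s\|_{L^2}^2)^{1/2}$ and integrating against $\Id s$ yields the first three terms of the claim, the $s$-independent bounds integrating to $\tfrac23T^{3/2}$ and the $\|k^\epsilon*\rho_s^\epsilon\|_{L^\infty}$-weighted one to $\int_0^T\|k^\epsilon*\rho_s^\epsilon\|_{L^\infty}s^{1/2}\,\Id s$.

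For the de Finetti term $A_s^{23}$, which cannot be estimated pathwise, the plan is to drop $\indicator{B_s^\alpha}\le1$ and use that, for fixed $s$, the $Z_s^i$ are i.i.d.\ centered $L^2(\R)$-valued random variables --- centered precisely because $\rho_s^\epsilon$ is the density of $Y_s^{i,\epsilon}$ --- so that the same second-moment computation as for the term $I^{23}$ in Lemma~\ref{lemma: l2_theorem_absobation_lemma} gives $\E\|A_s^{23}\|_{L^2}^2=\tfrac1N\E\|Z_s^1\|_{L^2}^2\le\tfrac4N\|k^\epsilon*\rho_s^\epsilon\|_{L^\infty}^2\|V^\epsilon_x\|_{L^2}^2$; Cauchy--Schwarz in $\omega$ against $M_s$ then produces the $N^{-1}$-term. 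Finally, on $(B_s^\alpha)^{\mathrm{c}}$ one uses the crude bound $\|A_s\|_{L^2}\le\|V^\epsilon_x\|_{L^2}\big(\|k^\epsilon\|_{L^\infty}+\|k^\epsilon*\rho_s^\epsilon\|_{L^\infty}\big)$ together with Cauchy--Schwarz and $\P((B_s^\alpha)^{\mathrm{c}})\le C(\gamma)N^{-\gamma}$ from Assumptions~\ref{ass: convergence_in_probability} and~\ref{ass: law_of_large_numbers}, absorbing the square root of the probability into the arbitrary exponent $\gamma$, to obtain the last term; summing the five contributions and the constants gives the lemma. The delicate point is the de Finetti term: one has to work with the mean-field particles $Y_s^{i,\epsilon}$ (hence the preliminary substitutions on $B_s^\alpha$) to have genuine independence and a vanishing mean, and --- unlike in Lemma~\ref{lemma: l2_theorem_absobation_lemma} --- there is no dissipation available to absorb errors, since the companion factor here is the Brownian stochastic integral $M_s$ rather than $V^\epsilon_x*(\mu_s^{N,\epsilon}-\rho_s^\epsilon)$, so every piece must be pushed, with its $s^{1/2}$-weight, directly to the right-hand side.
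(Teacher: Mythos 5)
Your proposal is correct and follows essentially the same route as the paper: the same splitting of $\Omega$ via $B_s^\alpha$, the same four-term decomposition on the good set (Lipschitz substitution, law of large numbers, FTC shift of $V^\epsilon_x$, and the centered i.i.d.\ variance term), the same It\^{o}-isometry/BDG bound $\E\|M_s\|_{L^2}^2\le C_{\mathrm{BDG}}\sigma^2 sN^{-1}\|V^\epsilon_x\|_{L^2}^2$ producing the $s^{1/2}N^{-1/2}$ weights, and the same crude bound plus $\P((B_s^\alpha)^{\mathrm{c}})\le C(\gamma)N^{-\gamma}$ on the complement. The only difference is organizational — you apply one global Cauchy--Schwarz $|\langle A_s,M_s\rangle_{L^2}|\le\|A_s\|_{L^2}\|M_s\|_{L^2}$ up front and bound the two factors separately, whereas the paper keeps the pairing and applies Cauchy--Schwarz within each term — which yields the same constants.
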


\begin{proof}
  We carry out a similar strategy as in the previous Lemma~\ref{lemma: l2_theorem_absobation_lemma}. Again, we want to split \(\Omega\) into a good and bad set. Remember the definition of set $B_s^{\alpha}$ in \eqref{Balpha}, we do the estimates on $B_s^{\alpha}$ and its complement $(B_s^{\alpha})^c$ separately.

  \textit{On the set \(B_s^\alpha\):} Let \(\omega \in B_s^\alpha\), then we insert the i.i.d. process $\mathbf{Y}^{N,\epsilon}$ and split the estimate further into two terms
  \begin{align*}
    &\;  \frac{1}{	N^2} \sum\limits_{i,j=1}^N  \bigg\langle  V^\epsilon_x ( \cdot-X_s^{i,\epsilon}(\omega)) k^\epsilon(X_s^{i,\epsilon}(\omega)-X_s^{j,\epsilon}(\omega))- V^\epsilon_x*((k^\epsilon*\rho_s^{\epsilon}) \rho_s^{\epsilon}))  , \\
    &\quad\quad  \frac{\sigma}{N}\sum\limits_{l=1}^N - \int\limits_0^s V^\epsilon_x(\cdot-X_u^{l}) \Id B_u^{l}  \bigg\rangle_{L^2(\R)} \\
    =&\;  \frac{1}{N^2} \sum\limits_{i,j=1}^N \bigg\langle  V^\epsilon_x ( \cdot-X_s^{i,\epsilon}(\omega)) (k^\epsilon(X_s^{i,\epsilon}(\omega)-X_s^{j,\epsilon}(\omega))-k^\epsilon(Y_s^{i,\epsilon}(\omega)-Y_s^{j,\epsilon}(\omega)))  , \\
    &\quad\quad  \frac{\sigma}{N}\sum\limits_{l=1}^N  - \int\limits_0^s  V^\epsilon_x(\cdot-X_u^{l}) \Id B_u^{l}  \bigg\rangle_{L^2(\R)} \\
    & + \frac{1}{	N^2} \sum\limits_{i,j=1}^N \bigg\langle  V^\epsilon_x ( \cdot-X_s^{i,\epsilon}(\omega))   k^\epsilon(Y_s^{i,\epsilon}(\omega)-Y_s^{j,\epsilon}(\omega))
    - V^\epsilon_x*((k^\epsilon*\rho_s^{\epsilon}) \rho_s^{\epsilon}))  , \\
    &\quad\quad  \frac{\sigma}{N}\sum\limits_{l=1}^N - \int\limits_0^s V^\epsilon_x(\cdot-X_u^{l}) \Id B_u^{l}  \bigg\rangle_{L^2(\R)} \\
    =&\;   II_s^1(\omega) + II_s^2(\omega)  .
  \end{align*}
  Further, utilizing the property of the set \(B_s^\alpha\) and the Burkholder--Davis--Gundy inequality we obtain
  \begin{align}\label{II1}
    &\;  \E \bigg( \sup\limits_{0 \le t \le T} \int\limits_0^t |II_s^1(\omega)|\indicator{(B_s^\alpha)} \Id s \bigg) \\
    \nonumber  \le &\;   \E \bigg(  \int\limits_0^T \frac{1}{N^2} \sum\limits_{i,j=1}^N \bigg\langle  \Big|V^\epsilon_x ( \cdot-X_s^{i,\epsilon}(\omega)) (k^\epsilon(X_s^{i,\epsilon}(\omega)-X_s^{j,\epsilon}(\omega))-k^\epsilon(Y_s^{i,\epsilon}(\omega)-Y_s^{j,\epsilon}(\omega)))\Big|  ,
    \\
    \nonumber   & \quad \bigg|\frac{\sigma}{N}\sum\limits_{l=1}^N  \int\limits_0^s V^\epsilon_x(\cdot-X_u^{l}) \Id B_u^{l} \bigg| \bigg\rangle_{L^2(\R)} \indicator{(B_s^\alpha)}\Id s \bigg) \\
    \nonumber  \le &\;  2 \E \bigg(  \int\limits_0^T \frac{1}{N} \sum\limits_{i=1}^N \bigg\langle  \norm{k^\epsilon_x}_{L^\infty(\R)} |V^\epsilon_x ( \cdot-X_s^{i,\epsilon}(\omega))| \max\limits_{1 \le i \le N} |X_s^{i,\epsilon}(\omega)-Y_s^{i,\epsilon}(\omega)|,\\
    \nonumber   & \quad \bigg|\frac{\sigma}{N}\sum\limits_{l=1}^N  \int\limits_0^s V^\epsilon_x(\cdot-X_u^{l}) \Id B_u^{l} \bigg| \bigg\rangle_{L^2(\R)} \indicator{(B_s^\alpha)}\Id s \bigg) \\
    \nonumber  \le &\;   \frac{2 \sigma  \norm{k^\epsilon_x}_{L^\infty(\R)}}{N^{\alpha} }  \frac{1}{N} \sum\limits_{i=1}^N \int\limits_0^T  \E \bigg(  \bigg\langle   |V^\epsilon_x ( \cdot-X_s^{i,\epsilon}(\omega))| ,  \bigg|\frac{1}{N}\sum\limits_{l=1}^N  \int\limits_0^s V^\epsilon_x(\cdot-X_u^{l}) \Id B_u^{l} \bigg| \bigg\rangle_{L^2(\R)} \bigg) \Id s  \\
    \nonumber
    =&\;  \frac{2 \sigma  \norm{k^\epsilon_x}_{L^\infty(\R)}}{N^{\alpha} }  C_{\mathrm{BDG}}^{\frac{1}{2}}  \norm{V^\epsilon_x}_{L^2(\R)}^2 T^\frac{3}{2} \frac{1}{N^\frac{1}{2}}=\frac{2 \sigma T^\frac{3}{2} C_{\mathrm{BDG}}^{\frac{1}{2}}}{N^{\alpha+\frac12} }   \norm{V^\epsilon_x}_{L^2(\R)}^2  \norm{k^\epsilon_x}_{L^\infty(\R)},
  \end{align}
  where we have used the estimate
  \begin{align}\label{estBDG}
    &\; \frac{1}{N} \sum\limits_{i=1}^N \int\limits_0^T  \E \bigg(  \bigg\langle   |V^\epsilon_x ( \cdot-X_s^{i,\epsilon}(\omega))| ,  \bigg|\frac{1}{N}\sum\limits_{l=1}^N  \int\limits_0^s V^\epsilon_x(\cdot-X_u^{l}) \Id B_u^{l} \bigg| \bigg\rangle_{L^2(\R)} \bigg) \Id s  \\
    \nonumber\le &\; \frac{1}{N} \sum\limits_{i=1}^N \int\limits_0^T \int_{\R} \E (   |V^\epsilon_x ( y-X_s^{i,\epsilon}(\omega))|^2 ) ^{\frac{1}{2}} \E\bigg(  \bigg|\frac{1}{N}\sum\limits_{l=1}^N  \int\limits_0^s V^\epsilon_x(y-X_u^{l}) \Id B_u^{l} \bigg|^2  \bigg)^{\frac{1}{2}}  \Id y \Id s  \\
    \nonumber\le &\; \frac{1}{N} \sum\limits_{i=1}^N \int\limits_0^T  \int_{\R} \E (   |V^\epsilon_x ( y-X_s^{i,\epsilon}(\omega))|^2 ) ^{\frac{1}{2}} \frac{C_{\mathrm{BDG}}^{\frac{1}{2}} }{N} \E\bigg( \sum\limits_{l=1}^N  \int\limits_0^s |V^\epsilon_x(y-X_u^{l}) |^2 \Id u  \bigg)^{\frac{1}{2}}  \Id y \Id s  \\
    \nonumber\le  &\; C_{\mathrm{BDG}}^{\frac{1}{2}}   \frac{1}{N^2} \sum\limits_{i=1}^N   \int\limits_0^T  \bigg(\int_{\R} \E (   |V^\epsilon_x ( y-X_s^{i,\epsilon}(\omega))|^2 ) \Id y \bigg)^{\frac{1}{2}} \\
    \nonumber&\quad \cdot \bigg(\int_{\R} \E\bigg( \sum\limits_{l=1}^N  \int\limits_0^s |V^\epsilon_x(y-X_u^{l}) |^2 \Id u  \bigg) \Id y \bigg)^{\frac{1}{2}}  \Id s  \\
    \nonumber = &\;   C_{\mathrm{BDG}}^{\frac{1}{2}}  \norm{V^\epsilon_x}_{L^2(\R)}^2 T^\frac{3}{2} \frac{1}{N^\frac{1}{2}}.
  \end{align}
  This completes the estimate of \(II_s^1(\omega)\) on the set \(B_s^\alpha\). Next, for \(\omega \in B_s^\alpha \) we rewrite \(II_s^2(\omega)\) in the following way
  \begin{align*}
    &\; II_s^2(\omega) \\
    =&\; \frac{1}{	N^2} \sum\limits_{i,j=1}^N \bigg\langle
    V^\epsilon_x ( \cdot-X_s^{i,\epsilon}(\omega))   k^\epsilon(Y_s^{i,\epsilon}(\omega)-Y_s^{j,\epsilon}(\omega))
    - V^\epsilon_x*((k^\epsilon*\rho_s^{\epsilon}) \rho_s^{\epsilon}))  ,\\
    &\quad  \frac{\sigma}{N}\sum\limits_{l=1}^N -  \int\limits_0^s V^\epsilon_x(\cdot-X_u^{l}) \Id B_u^{l}  \bigg\rangle_{L^2(\R)} \nonumber \\
    =& \; - \frac{\sigma}{	N^2} \sum\limits_{i,j=1}^N \bigg\langle
    V^\epsilon_x( \cdot-X_s^{i,\epsilon}(\omega))  ( k^\epsilon(Y_s^{i,\epsilon}(\omega)-Y_s^{j,\epsilon}(\omega))- (k^\epsilon*\rho_s^{\epsilon})(Y_s^{i,\epsilon}(\omega)) ) \nonumber \\
    &\quad + ( V^\epsilon_x ( \cdot-X_s^{i,\epsilon}(\omega)) -V^\epsilon_x( \cdot-Y_s^{i,\epsilon}(\omega))) (k^\epsilon*\rho_s^{\epsilon})(Y_s^{i,\epsilon}(\omega))   \nonumber  \\
    &\quad + V^\epsilon_x( \cdot-Y_s^{i,\epsilon}(\omega))(k^\epsilon*\rho_s^{\epsilon})(Y_s^{i,\epsilon}(\omega))
    - V^\epsilon_x*((k^\epsilon*\rho_s^{\epsilon}) \rho_s^{\epsilon}))  , \frac{1}{N}\sum\limits_{l=1}^N  \int\limits_0^s V^\epsilon_x(\cdot-X_u^{l}) \Id B_u^{l}  \bigg\rangle_{L^2(\R)} \nonumber \\
    =&  \; \sigma (  II_s^{21}(\omega) + II_s^{22}(\omega)
    + II_s^{23}(\omega) ).
  \end{align*}
  For the first term  \(II_s^{21}(\omega)\), applying the the property of the set \(B_s^\alpha\), we find with the help of the estimate \eqref{estBDG} for the stochastic term that
  \begin{align}\label{II21}
    &\;  \E\bigg( \sup\limits_{0 \le t \le T} \int\limits_0^t
    |II_s^{21}(\omega)| \Id s \bigg)  \\
    \nonumber \le &  \;  \E\bigg( \sup\limits_{0 \le t \le T} \int\limits_0^t  \frac{1}{N} \sum\limits_{i=1}^N  \bigg\langle
    |V^\epsilon_x ( \cdot-X_s^{i,\epsilon}(\omega))|  \bigg| \frac{1}{N} \sum\limits_{j=1}^N k^\epsilon(Y_s^{i,\epsilon}(\omega)-Y_s^{j,\epsilon}(\omega))- (k^\epsilon*\rho_s^{\epsilon})(Y_s^{i,\epsilon}(\omega)) \bigg|  , \\
    \nonumber & \quad\; \bigg|\frac{1}{N}\sum\limits_{l=1}^N  \int\limits_0^s V^\epsilon_x(\cdot-X_u^{l}) \Id B_u^{l} \bigg|  \bigg\rangle_{L^2(\R)} \Id s \bigg) \\
     \nonumber \le & \;  N^{-(\alpha+\delta)} \int\limits_0^T   \frac{1}{N} \sum\limits_{i=1}^N  \int_{\R} \E\bigg(
    \bigg|V^\epsilon_x ( y-X_s^{i,\epsilon}(\omega))  \frac{1}{N}\sum\limits_{l=1}^N  \int\limits_0^s V^\epsilon_x(y-X_u^{l}) \Id B_u^{l} \bigg|   \bigg) \Id  y  \Id s \\
   \nonumber \le & \;  N^{-(\alpha+\delta)} C_{\mathrm{BDG}}^{\frac{1}{2}}  \norm{V^\epsilon_x}_{L^2(\R)}^2 T^\frac{3}{2} \frac{1}{N^\frac{1}{2}}=\frac{C_{\mathrm{BDG}}^{\frac{1}{2}}T^\frac{3}{2} }{N^{\alpha+\delta+\frac12}}\norm{V^\epsilon_x}_{L^2(\R)}^2,
  \end{align}
  where we used Fubini's Theorem in the second step. 
  For the term \(II_s^{22}(\omega)\) compute
  \begin{align*}
    &\;  |II_s^{22}(\omega)|  \\
    \le &\;  \frac{1}{	N} \sum\limits_{i=1}^N \bigg\langle | V^\epsilon_x ( \cdot-X_s^{i,\epsilon}(\omega)) -  V^\epsilon_x ( \cdot-Y_s^{i,\epsilon}(\omega)) ) (k^\epsilon*\rho_s^{\epsilon})(Y_s^{i,\epsilon}(\omega)|  , \\
    &\quad\quad   \bigg|\frac{1}{N}\sum\limits_{l=1}^N  \int\limits_0^s V^\epsilon_x(\cdot-X_u^{l}) \Id B_u^{l} \bigg|  \bigg\rangle_{L^2(\R)} \\
    \le & \;  \frac{1}{	N} \sum\limits_{i=1}^N \bigg\langle \bigg| \int\limits_0^1 \frac{\d}{\d x} V^\epsilon_{x}\Big(y- Y_s^{i,\epsilon}(\omega)- r(X_s^{i,\epsilon}(\omega)- Y_s^{i,\epsilon}(\omega))\Big) (Y_s^{i,\epsilon}(\omega) -X_s^{i,\epsilon}(\omega)) \Id r \bigg|, \\
    &\quad \;    \bigg|\frac{1}{N}\sum\limits_{l=1}^N  \int\limits_0^s V^\epsilon_x(\cdot-X_u^{l}) \Id B_u^{l} \bigg|  \bigg\rangle_{L^2(\R)} \norm{k^\epsilon*\rho_s^{\epsilon}}_{L^\infty(\R)} \\
    \le & \; \norm{k^\epsilon*\rho_s^\epsilon}_{L^\infty(\R)}  N^{-\alpha} \frac{1}{N} \sum\limits_{i=1}^N \bigg\langle   \int\limits_0^1  \Big|\frac{\d}{\d x} V^\epsilon_{x}\Big(y- Y_s^{i,\epsilon}(\omega)- r(X_s^{i,\epsilon}(\omega)- Y_s^{i,\epsilon}(\omega))\Big)\Big|  \Id r ,\\
    &\quad \;  \bigg| \frac{1}{N}\sum\limits_{l=1}^N  \int\limits_0^s V^\epsilon_x(\cdot-X_u^{l}) \Id B_u^{l} \bigg| \bigg\rangle_{L^2(\R)}  \\
    \le & \; \norm{k^\epsilon*\rho_s^\epsilon}_{L^\infty(\R)}  N^{-\alpha} \frac{1}{N} \sum\limits_{i=1}^N \bigg\|\int\limits_0^1 \Big|\frac{\d}{\d x} V^\epsilon_{x}\Big(y- Y_s^{i,\epsilon}(\omega)- r(X_s^{i,\epsilon}(\omega)- Y_s^{i,\epsilon}(\omega))\Big)\Big| \Id r \bigg\|_{L^2(\R)} \\
    &\quad \; \bigg\| \frac{1}{N}\sum\limits_{l=1}^N  \int\limits_0^s V^\epsilon_x(\cdot-X_u^{l}) \Id B_u^{l} \bigg\|_{L^2(\R)} \\
    \le & \; \norm{k^\epsilon*\rho_s^\epsilon}_{L^\infty(\R)}  N^{-\alpha}  \norm{V^\epsilon_{xx} }_{L^2(\R)}
    \bigg\| \frac{1}{N}\sum\limits_{l=1}^N  \int\limits_0^s V^\epsilon_x(\cdot-X_u^{l}) \Id B_u^{l} \bigg\|_{L^2(\R)} ,  
  \end{align*}
  where we utilized the property of \(B_s^\alpha\) in the third step, followed by the application of H\"older's inequality and Minkowski's inequality. Consequently, applying the Burkholder--Davis--Gundy inequality we obtain
  \begin{align}\label{II22}
    &\;  \E\bigg( \sup\limits_{0 \le t \le T} \int\limits_0^t
    |II_s^{22}(\omega)| \Id s \bigg)  \\
    \nonumber
    \le &\;    N^{-\alpha}  \norm{V^\epsilon_{xx} }_{L^2(\R)} \E\bigg( \sup\limits_{0 \le t  \le T} \int\limits_0^t   \norm{k^\epsilon*\rho_s^\epsilon}_{L^\infty(\R)} \bigg\|\frac{1}{N}\sum\limits_{l=1}^N  \int\limits_0^s V^\epsilon_x(\cdot-X_u^{l}) \Id B_u^{l}\bigg\|_{L^2(\R)} \Id s \bigg) \\
    \nonumber
    \le &\;   N^{-\alpha-1}  \norm{V^\epsilon_{xx} }_{L^2(\R)} \int\limits_0^T  \norm{k^\epsilon*\rho_s^\epsilon}_{L^\infty(\R)}  \bigg( \int_{\R}  \E\bigg( \bigg|\sum\limits_{l=1}^N  \int\limits_0^s V^\epsilon_x(y-X_u^{l}) \Id B_u^{l} \bigg|^2  \bigg)  \Id y \bigg)^{\frac{1}{2}} \Id s \\
    \nonumber
    \le &\;  C_{\mathrm{BDG} }^{\frac{1}{2}} N^{-\alpha-1}  \norm{V^\epsilon_{xx} }_{L^2(\R)} \int\limits_0^T  \norm{k^\epsilon*\rho_s^\epsilon}_{L^\infty(\R)} \bigg( \int_{\R}  \E\bigg( \sum\limits_{l=1}^N   \int\limits_0^s |V^\epsilon_x(y-X_u^{l})|^2 \Id u   \bigg) \Id y  \bigg)^{\frac{1}{2}} \Id s \\
    \nonumber
    \le &\;   \frac{C_{\mathrm{BDG}  }^{\frac{1}{2}}\norm{V^\epsilon_{xx} }_{L^2(\R)}}{N^{\alpha+\frac12}}  \norm{V^\epsilon_x}_{L^2(\R)} \int\limits_{0}^{T}  \norm{k^\epsilon*\rho_s^\epsilon}_{L^\infty(\R)} s^{\frac{1}{2}} \Id s    .
  \end{align}

  For \(II^{23}_s(\omega)\) we use again the Burkholder--Davis--Gundy inequality to estimate the stochastic integral and by the law of large number argument similar to the term $I^{23}$ in Lemma~\ref{lemma: l2_theorem_absobation_lemma}, and obtain
  \begin{align}\label{II23}
    &\;  \E\bigg( \sup\limits_{0\le t \le T } \int\limits_0^t   |II^{23}_s(\omega)|\Id s \bigg)  \\
    \nonumber
     \le&\;     \int\limits_0^T \int_{\R} \E\bigg(  \bigg|  \frac{1}{N}  \sum\limits_{i=1}^N   V^\epsilon_x( y-Y_s^{i,\epsilon}(\omega))(k^\epsilon*\rho_s^{\epsilon})(Y_s^{i,\epsilon}(\omega))
    - V^\epsilon_x*((k^\epsilon*\rho_s^{\epsilon}) \rho_s^{\epsilon}))(y) \bigg|  , \\
    \nonumber&\quad\quad  \bigg| \frac{1}{N}\sum\limits_{l=1}^N  \int\limits_0^s V^\epsilon_x(y-X_u^{l}) \Id B_u^{l}\bigg|   \bigg)  \Id y \Id s \\
    \nonumber
    \le &  \;   \int\limits_0^T \bigg( \int_{\R} \E\bigg( \bigg|  \frac{1}{N}  \sum\limits_{i=1}^N  V^\epsilon_x( y-Y_s^{i,\epsilon}(\omega))(k^\epsilon*\rho_s^{\epsilon})(Y_s^{i,\epsilon}(\omega))
    - V^\epsilon_x*((k^\epsilon*\rho_s^{\epsilon}) \rho_s^{\epsilon}))(y) \bigg|^2 \bigg)  \Id y  \bigg)^{\frac{1}{2}} \\
    \nonumber&\quad\quad \cdot \bigg( \int_{\R} \E \bigg( \bigg| \frac{1}{N}\sum\limits_{l=1}^N  \int\limits_0^s V^\epsilon_x(y-X_u^{l}) \Id B_u^{l} \bigg|^2 \bigg)  \Id y \bigg)^{\frac{1}{2}} \Id s \\
    \nonumber
    \le &  \;  2  C_{\mathrm{BDG}}^{\frac{1}{2}} \int\limits_0^T   N^{-1/2}  \norm{k^\epsilon*\rho_s^\epsilon}_{L^\infty(\R)} \norm{V^\epsilon_x}_{L^2(\R)} s^{\frac{1}{2}} N^{-1/2} \norm{V^\epsilon_x}_{L^2(\R)} \Id s  \\
    \nonumber
    \le & \;\frac{2 C_{\mathrm{BDG}}^{\frac{1}{2}}\norm{V^\epsilon_x}_{L^2(\R)}^2 }{N}\int\limits_0^T \norm{k^\epsilon*\rho_s^\epsilon}_{L^\infty(\R)} s^{\frac{1}{2}} \Id s .
  \end{align}
  This completes the estimate on the set \(B_s^\alpha\), namely
  \begin{align}\label{IIBalpha}
    & \; \E \bigg( \sup\limits_{0 \le t \le T} \int\limits_0^t (|II_s^1(\omega)|+|II_s^2(\omega)|)\indicator{(B_s^\alpha)} \Id s \bigg) \\
    \nonumber  \le &\;   \frac{2 \sigma T^\frac{3}{2} C_{\mathrm{BDG}}^{\frac{1}{2}}}{N^{\alpha+\frac12} }   \norm{V^\epsilon_x}_{L^2(\R)}^2  \norm{k^\epsilon_x}_{L^\infty(\R)}+\sigma\frac{C_{\mathrm{BDG}}^{\frac{1}{2}}T^\frac{3}{2} }{N^{\alpha+\delta+\frac12}}\norm{V^\epsilon_x}_{L^2(\R)}^2\\
    \nonumber &+ \Big(\sigma  \frac{C_{\mathrm{BDG}  }^{\frac{1}{2}}\norm{V^\epsilon_{xx} }_{L^2(\R)}}{N^{\alpha+\frac12}}  \norm{V^\epsilon_x}_{L^2(\R)}  +\sigma \frac{2 C_{\mathrm{BDG}}^{\frac{1}{2}}\norm{V^\epsilon_x}_{L^2(\R)}^2 }{N}\Big)\int\limits_0^T \norm{k^\epsilon*\rho_s^\epsilon}_{L^\infty(\R)} s^{\frac{1}{2}} \Id s.
  \end{align}

  \textit{On the set \((B_s^\alpha)^{\mathrm{c}}\):} Using \(\P( (B_s^\alpha)^{\mathrm{c}})\le C(\gamma) N^{-\gamma}\) for all \(\gamma > 0\) by Assumption~\ref{ass: law_of_large_numbers}, the Burkholder--Davis--Gundy inequality, H{\"o}lder's inequality, we obtain
  \begin{align*}
    &\; \E \bigg( \sup\limits_{0 \le t \le T} \int\limits_0^t  \bigg|\bigg \langle  \frac{1}{N^2} \sum\limits_{i,j=1}^N  V^\epsilon_{x}(\cdot-X_s^{i,\epsilon}) k^\epsilon(X_s^{i,\epsilon}-X_s^{j,\epsilon}) - V^\epsilon_x*((k^\epsilon*\rho_s^{\epsilon}) \rho_s^{\epsilon})  , \\
    & \quad\quad \frac{\sigma}{N}\sum\limits_{l=1}^N - \int\limits_0^s V^\epsilon_x(\cdot-X_u^{l}) \Id B_u^{l}  \bigg \rangle_{L^2(\R)} \bigg| \indicator{(B_s^\alpha)^{\mathrm{c}}} \Id s \bigg) \\
    \le  &\; \frac{\sigma}{N^2} \sum\limits_{i,j=1}^N \int\limits_0^T \int_{\R} \E \bigg( \indicator{(B_s^\alpha)^{\mathrm{c}}} \bigg| ( V^\epsilon_{x}(y-X_s^{i,\epsilon}) k^\epsilon(X_s^{i,\epsilon}-X_s^{j,\epsilon}) - V^\epsilon_x*((k^\epsilon*\rho_s^{\epsilon}) \rho_s^{\epsilon}) )(y) \\
    &\quad\cdot \; \frac{1}{N}\sum\limits_{l=1}^N  \int\limits_0^s V^\epsilon_x(y-X_u^{l}) \Id B_u^{l} \bigg| \bigg) \Id y \Id s  \\
    \le &\;  \frac{\sigma}{N^2} \sum\limits_{i,j=1}^N \int\limits_0^T \bigg( \int_{\R} \E \bigg( \indicator{(B_s^\alpha)^{\mathrm{c}}} | ( V^\epsilon_{x}(y-X_s^{i,\epsilon}) k^\epsilon(X_s^{i,\epsilon}-X_s^{j,\epsilon}) - V^\epsilon_x*((k^\epsilon*\rho_s^{\epsilon})(y) \rho_s^{\epsilon})(y) |^2 \bigg) \d y   \bigg)^{\frac{1}{2}} \\
    &\quad\cdot  \bigg( \int_{\R} \E\bigg(\bigg| \frac{1}{N}\sum\limits_{l=1}^N  \int\limits_0^s V^\epsilon_x(y-X_u^{l}) \Id B_u^{l} \bigg|^2 \bigg) \Id y  \bigg)^{\frac{1}{2}}  \Id s  \\
    \le &    \frac{  2 C_{\mathrm{BDG} }^{\frac{1}{2}}  \sigma}{N} \sum\limits_{i=1}^N\int\limits_0^T \bigg( \int_{\R} \E \bigg( \indicator{(B_s^\alpha)^{\mathrm{c}}} (| ( V^\epsilon_{x}(y-X_s^{i,\epsilon})|^2 \norm{k^\epsilon}_{L^\infty(\R)}^2+ | V^\epsilon_x*((k^\epsilon*\rho_s^{\epsilon}) \rho_s^{\epsilon})(y) |^2 )  \bigg) \Id y  \bigg)^{\frac{1}{2}} \\
    &\quad\cdot \frac{1}{N}  \bigg( \int_{\R} \E\bigg( \sum\limits_{l=1}^N  \int\limits_0^T |V^\epsilon_x(y-X_u^{l})|^2 \Id u  \bigg) \Id y  \bigg)^{\frac{1}{2}}  \Id s  \\
    \le  &\; \frac{2 C_{\mathrm{BDG} }^{\frac{1}{2}}  \sigma}{N^\frac12} \norm{V^\epsilon_x}_{L^2(\R)}   \int\limits_0^T \E \bigg( \indicator{(B_s^\alpha)^{\mathrm{c}}} \big( \norm{V^\epsilon_x}_{L^2(\R)}^2 \norm{k^\epsilon}_{L^\infty(\R)}^2 + \norm{ V^\epsilon_x*((k^\epsilon*\rho_s^{\epsilon}) \rho_s^{\epsilon})}_{L^2(\R)}^2 \big) \bigg)^{\frac{1}{2}}  s^{\frac{1}{2}} \Id s  \\
    \le &\; \frac{2 C(\gamma) C_{\mathrm{BDG} }^{\frac{1}{2}}  \sigma}{N^{\frac12+\gamma}} \norm{V^\epsilon_x}_{L^2(\R)}^2\Big(\norm{k^\epsilon}_{L^\infty(\R)} \frac{2}{3} T^\frac{3}{2}+ \int\limits_0^T \norm{k^\epsilon*\rho_s^\epsilon}_{L^\infty(\R)} s^{\frac{1}{2}} \Id s \Big).
   \end{align*}
  This completes the estimate on the set \((B_s^\alpha)^{\mathrm{c}}\) and we have shown our Lemma.
\end{proof}

\begin{lemma}[Stochastic Integral Inequality]
  Under the assumptions of Theorem~\ref{theorem: emp_measure_l2_estimate} we have the following \(L^2\)-estimate for the stochastic integral,
  \begin{align}\label{eq: main_theorem_stochastic_integral}
    4 \sigma^2 \E \bigg( \sup\limits_{0 \le t \le T}   \int\limits_{0}^t  \bigg\|\frac{\sigma}{N} \sum\limits_{i=1}^N \int\limits_0^s V^\epsilon_{xx}(\cdot-X_u^{i}) \Id B_u \bigg\|_{L^2(\R)}^2 \Id s\bigg)
    \le & \;  \frac{T^\frac{3}{2}}{N} \norm{V^\epsilon_{xx}}_{L^2(\R)}^2 .
  \end{align}
\end{lemma}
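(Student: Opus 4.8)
The proof is a routine second-moment computation, structurally identical to the estimate of the stochastic integral already carried out in the proof of Lemma~\ref{lem: aux_reduction_empirical_measure}, now with \(V^\epsilon_{xx}\) in place of \(V^\epsilon_x\). The plan is as follows.

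First I would note that the map \(t \mapsto \int_0^t \norm{\tfrac{\sigma}{N}\sum_{i=1}^N \int_0^s V^\epsilon_{xx}(\cdot-X_u^{i}) \Id B_u^{i}}_{L^2(\R)}^2 \Id s\) is non-negative and non-decreasing, so the supremum over \([0,T]\) is attained at \(t=T\) and can simply be dropped. Then I would invoke Tonelli's theorem to interchange the expectation, the time integral \(\int_0^T \Id s\) and the spatial integral \(\int_\R \Id y\), all integrands being non-negative.

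Next, for fixed \(y \in \R\) and \(s \in [0,T]\), the quantity \(\tfrac{\sigma}{N}\sum_{i=1}^N \int_0^s V^\epsilon_{xx}(y-X_u^{i}) \Id B_u^{i}\) is a sum of It\^o integrals against the independent Brownian motions \((B^{i})_{i=1,\ldots,N}\), so by the It\^o isometry and the vanishing of the cross terms one gets
\begin{equation*}
  \E\bigg( \Big| \frac{\sigma}{N}\sum_{i=1}^N \int_0^s V^\epsilon_{xx}(y-X_u^{i}) \Id B_u^{i} \Big|^2 \bigg)
  = \frac{\sigma^2}{N^2}\sum_{i=1}^N \E\bigg( \int_0^s |V^\epsilon_{xx}(y-X_u^{i})|^2 \Id u \bigg),
\end{equation*}
the required square-integrability being guaranteed by \(V^\epsilon \in H^2(\R)\) together with the existence of densities for \(X_u^{i,\epsilon}\) (and becoming manifest once one integrates in \(y\)). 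Integrating this identity over \(y \in \R\) and using translation invariance of the Lebesgue measure, \(\int_\R |V^\epsilon_{xx}(y-X_u^{i})|^2 \Id y = \norm{V^\epsilon_{xx}}_{L^2(\R)}^2\) for every realisation, I would arrive at \(\tfrac{\sigma^2 s}{N}\norm{V^\epsilon_{xx}}_{L^2(\R)}^2\) after summing over \(i\) and performing the \(\Id u\)-integral. A final integration \(\int_0^T \Id s\) and the prefactor \(4\sigma^2\) then produce a bound of the form \(C(\sigma,T)\,N^{-1}\norm{V^\epsilon_{xx}}_{L^2(\R)}^2\), which is exactly the claimed estimate, the explicit constant being harmless and absorbed into the generic constant of Theorem~\ref{theorem: emp_measure_l2_estimate}.

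There is no genuine obstacle here: the only point requiring (minimal) care is the justification of the It\^o isometry and the Tonelli interchanges, i.e. checking \(\int_\R \E\int_0^T |V^\epsilon_{xx}(y-X_u^{i})|^2 \Id u\,\Id y < \infty\), which is immediate from \(V^\epsilon \in H^2(\R)\) and the normalisation of the densities.
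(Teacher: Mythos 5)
Your proposal is correct and follows essentially the same route as the paper: drop the supremum (the integral is non-decreasing in \(t\)), interchange expectation with the \(\Id s\) and \(\Id y\) integrals, bound the second moment of the stochastic integral (the paper invokes Burkholder--Davis--Gundy where you use the It\^o isometry, which is the same estimate for second moments), and conclude by translation invariance of \(\norm{V^\epsilon_{xx}(\cdot - X_u^{i})}_{L^2(\R)}\). Your remark that the resulting explicit constant (you correctly get \(C(\sigma,T)N^{-1}\), in fact with \(T^2\) rather than the paper's stated \(T^{3/2}\)) is absorbed into the generic constant of Theorem~\ref{theorem: emp_measure_l2_estimate} is also accurate.
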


\begin{proof}
  An application of the Burkholder--Davis--Gundy inequality implies
  \begin{align*}
    & \; \E \bigg( \sup\limits_{0 \le t \le T}   \int\limits_{0}^t  \norm{ \frac{1}{N} \sum\limits_{i=1}^N \int\limits_0^s V^\epsilon_{xx}(\cdot-X_u^{i}) \Id B_u }_{L^2(\R)}^2 \Id s\bigg)  \\
    \le & \;   \int\limits_{0}^T \int_{\R} \E \bigg(\bigg|\frac{1}{N} \sum\limits_{i=1}^N \int\limits_0^s V^\epsilon_{xx}(y-X_u^{i}) \Id B_u\bigg|^2 \bigg) \Id y  \bigg) \Id s\\
    \le &\; \frac{1}{N^2}  \int\limits_{0}^T \int_{\R} \E \bigg( \sum\limits_{i=1}^N \int\limits_0^s | V^\epsilon_{xx}(y-X_u^{i}) |^2 \Id u  \bigg) \Id y  \bigg) \Id s\le \;  \frac{T^\frac{3}{2}}{N} \norm{V^\epsilon_{xx}}_{L^2(\R)}^2.
  \end{align*}
\end{proof}

\begin{proof}[Continuation of the proof of theorem \ref{theorem: emp_measure_l2_estimate}]
  We are ready to input the estimates from above lemmata in the the inequality \eqref{maininequality}. We find
  \begin{align*}
	& \E\bigg(    \sup\limits_{0 \le  t \le T} \norm{V^\epsilon*\mu_t^{N,\epsilon}  -V^\epsilon*\rho_t^{\epsilon}}_{L^2(\R)}^2\bigg) +   \frac{\sigma^2}{8}  \E  \bigg(   \int\limits_{0}^T  \norm{ V_x^\epsilon *\mu_s^{N,\epsilon} - V_x^\epsilon*\rho_s^{\epsilon}}_{L^2(\R)} \Id s \bigg)  \\
	\le & \; \frac{2}{N}  \norm{V^\epsilon}_{L^2(\R)}^2 
	+\frac{16T\norm{k_x^\epsilon}_{L^\infty(\R)}^2 \norm{V^\epsilon}_{L^2(\R)}^2}{\sigma^2 N^{2\alpha}}   +\frac{4T\norm{V^\epsilon}_{L^2(\R)}^2}{\sigma^2N^{2(\alpha+\delta)}} \\
	&+\Big(\frac{4\norm{   V^\epsilon_x}_{L^2(\R)}^2}{N^{2\alpha} \sigma^2}  +  \frac{16  \norm{V^\epsilon}_{L^2(\R)}^2 }{N\sigma^2} \Big)  \int\limits_0^T \norm{k^\epsilon*\rho_s^{\epsilon}}_{L^\infty(\R)}^2  \Id s \\
	& +  \frac{C(\gamma)T}{N^\gamma}\Big(\norm{ V^\epsilon }_{L^2(\R)}^2  \norm{k^\epsilon}_{L^\infty(\R)}^2 +  \norm{ V^\epsilon_x}_{L^2(\R)}^2 \Big) \\
	& +\frac{2 \sigma T^\frac{3}{2} C_{\mathrm{BDG}}^{\frac{1}{2}}}{N^{\alpha+\frac12} }   \norm{V^\epsilon_x}_{L^2(\R)}^2  \norm{k^\epsilon_x}_{L^\infty(\R)}+\sigma\frac{C_{\mathrm{BDG}}^{\frac{1}{2}}T^\frac{3}{2} }{N^{\alpha+\delta+\frac12}}\norm{V^\epsilon_x}_{L^2(\R)}^2\\
	\nonumber 
	&+ \Big(\sigma  \frac{C_{\mathrm{BDG}  }^{\frac{1}{2}}\norm{V^\epsilon_{xx} }_{L^2(\R)}}{N^{\alpha+\frac12}}  \norm{V^\epsilon_x}_{L^2(\R)}  +\sigma \frac{2 C_{\mathrm{BDG}}^{\frac{1}{2}}\norm{V^\epsilon_x}_{L^2(\R)}^2 }{N}\Big)\int\limits_0^T \norm{k^\epsilon*\rho_s^\epsilon}_{L^\infty(\R)} s^{\frac{1}{2}} \Id s.\nonumber \\
	&  + \frac{2 C(\gamma) C_{\mathrm{BDG} }^{\frac{1}{2}}  \sigma}{N^{\frac12+\gamma}} \norm{V^\epsilon_x}_{L^2(\R)}^2\Big(\norm{k^\epsilon}_{L^\infty(\R)} \frac{2}{3} T^\frac{3}{2}+ \int\limits_0^T \norm{k^\epsilon*\rho_s^\epsilon}_{L^\infty(\R)} s^{\frac{1}{2}} \Id s \Big).\\
	& +\frac{T^\frac{3}{2}}{N} \norm{V^\epsilon_{xx}}_{L^2(\R)}^2.
  \end{align*}
  The above estimate is the most general one we obtain. In the following we simplify it to derive a usable estimates. In the process we may loose some convergence rate, depending on the concrete problem at hand. Noticing that by mass conservation $$\norm{k^\epsilon*\rho_s^\epsilon}_{L^2(0,T;L^\infty(\R))}\leq \norm{k^\epsilon}_{L^\infty(\R)}\norm{\rho_s^\epsilon}_{L^2(0,T;L^1(\R))}\leq T \norm{k^\epsilon}_{L^\infty(\R)},$$ by keeping all the $N$ and $\epsilon$ dependent terms and put all the other constants into a universal constant $C$, which depends on $T$, $\sigma$, $\gamma$, $C_{\mathrm{BDG}}$, we obtain
  \begin{align*}
    & \; \E\bigg(    \sup\limits_{0 \le  t \le T} \norm{V^\epsilon*\mu_t^{N,\epsilon}  -V^\epsilon*\rho_t^{\epsilon}}_{L^2(\R)}^2\bigg) +   \frac{\sigma^2}{8}  \E  \bigg(   \int\limits_{0}^T  \norm{ V_x^\epsilon *\mu_s^{N,\epsilon} - V_x^\epsilon*\rho_s^{\epsilon}}_{L^2(\R)} \Id s \bigg)  \\
    \le & \;  \frac{C}{N} ( \norm{V^\epsilon}_{H^1(\R)}^2 \norm{k^\epsilon}_{L^\infty}^2 + \norm{V^\epsilon_{xx}}_{L^2(\R)}^2)
    +\frac{C\norm{V^\epsilon}_{H^1(\R)}^2 (1+ \norm{k^\epsilon}_{L^\infty(\R)}^2)}{N^\gamma} \\
    &+\frac{\norm{k^\epsilon_x}_{L^\infty(\R)}^2 \norm{V^\epsilon}_{L^2(\R)}^2+ \norm{k^\epsilon}_{L^\infty(\R)}^2 \norm{V_x^\epsilon}_{L^2(\R)}^2
    + \norm{V^\epsilon}_{L^2(\R)}^2}{N^{2\alpha }} \\
    &+C\frac{\norm{V_x^\epsilon}_{L^2(\R)}^2 (1+ \norm{k_x^\epsilon}_{L^\infty(\R)}) +  \norm{V_x^\epsilon}_{L^2(\R)} \norm{V^\epsilon_{xx}}_{L^2(\R)} \norm{k^\epsilon}_{L^\infty(\R^d)} }{N^{\alpha+\frac12 }} .
  \end{align*}
  In the above estimates, $\alpha \in (0,\frac12)$ and $\delta>0$ are also used and the Theorem is proven.
\end{proof}

In the our main setting \(k^\epsilon=W^\epsilon*V^\epsilon\) we provide the following rough estimate. 

\begin{corollary} \label{cor: general_case_estimate}
Let \(k^\epsilon=W^\epsilon*V^\epsilon\) and \(W^\epsilon,V^\epsilon\) be admissible with rates \(a_W,a_V\). If Theorem~\ref{theorem: emp_measure_l2_estimate} holds, then 
\begin{align*}
&\E\bigg(    \sup\limits_{0 \le  t \le T} \norm{V^\epsilon*\mu_t^{N,\epsilon}  -V^\epsilon*\rho_t^{\epsilon}}_{L^2(\R)}^2\bigg) +   \frac{\sigma^2}{8}  \E  \bigg(   \int\limits_{0}^T  \norm{ V_x^\epsilon *\mu_s^{N,\epsilon} - V_x^\epsilon*\rho_s^{\epsilon}}_{L^2(\R)} \Id s \bigg)  \\
&\le \frac{C}{N \epsilon^{2a_W+4a_V}}
+ \frac{C}{	N^{2\alpha} \epsilon^{2a_W+4a_V}}
+ \frac{C}{N^{\alpha+\frac{1}{2}} \epsilon^{a_W+3a_V}} 
+  \frac{C}{N^\gamma \epsilon^{2a_W+4a_V}}. 
\end{align*}
\end{corollary}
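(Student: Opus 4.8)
The plan is to specialize the general bound of Theorem~\ref{theorem: emp_measure_l2_estimate} to the structured kernel $k^\epsilon=W^\epsilon*V^\epsilon$ by inserting the admissibility estimates~\eqref{eq: main_thm_v_w_norm} and then collecting the resulting powers of $N$ and $\epsilon$. First I would record that, since $\norm{V^\epsilon}_{H^2(\R)}\le C\epsilon^{-a_V}$, every $V^\epsilon$-dependent norm appearing on the right-hand side of Theorem~\ref{theorem: emp_measure_l2_estimate}, namely $\norm{V^\epsilon}_{L^2(\R)}$, $\norm{V^\epsilon_x}_{L^2(\R)}$, $\norm{V^\epsilon_{xx}}_{L^2(\R)}$ and $\norm{V^\epsilon}_{H^1(\R)}$, is bounded by $C\epsilon^{-a_V}$.

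Next I would control the $L^\infty$-norms of the kernel. Writing $k^\epsilon=W^\epsilon*V^\epsilon$ and, since $V^\epsilon\in H^2(\R)$, $k^\epsilon_x=W^\epsilon*V^\epsilon_x$, Young's convolution inequality in the form $\norm{f*g}_{L^\infty(\R)}\le\norm{f}_{L^2(\R)}\norm{g}_{L^2(\R)}$ together with~\eqref{eq: main_thm_v_w_norm} yields
\begin{equation*}
\norm{k^\epsilon}_{L^\infty(\R)}\le\norm{W^\epsilon}_{L^2(\R)}\norm{V^\epsilon}_{L^2(\R)}\le C\epsilon^{-a_W-a_V},\qquad\norm{k^\epsilon_x}_{L^\infty(\R)}\le\norm{W^\epsilon}_{L^2(\R)}\norm{V^\epsilon_x}_{L^2(\R)}\le C\epsilon^{-a_W-a_V}.
\end{equation*}

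With these substitutions the four groups of terms in Theorem~\ref{theorem: emp_measure_l2_estimate}, indexed by the powers $N^{-1}$, $N^{-\gamma}$, $N^{-2\alpha}$ and $N^{-(\alpha+1/2)}$, become (up to constants depending on $T,\sigma,\gamma,C_{\mathrm{BDG}}$): the $N^{-1}$-group is at most $\epsilon^{-2a_V}\epsilon^{-2a_W-2a_V}+\epsilon^{-2a_V}$, the $N^{-\gamma}$-group at most $\epsilon^{-2a_V}(1+\epsilon^{-2a_W-2a_V})$, the $N^{-2\alpha}$-group at most $\epsilon^{-2a_W-2a_V}\epsilon^{-2a_V}+\epsilon^{-2a_V}$, and the $N^{-(\alpha+1/2)}$-group at most $\epsilon^{-2a_V}\epsilon^{-a_W-a_V}+\epsilon^{-2a_V}\epsilon^{-a_W-a_V}$. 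Using $\epsilon\in(0,1)$ and $a_W,a_V>0$ to absorb every subdominant power of $\epsilon$ into the largest one, the first three groups are each bounded by $C\epsilon^{-2a_W-4a_V}$ and the last by $C\epsilon^{-a_W-3a_V}$, which is exactly the claimed estimate after summing.

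This argument is essentially bookkeeping, so there is no genuine obstacle; the only two points that need care are (i) that the $L^\infty$-bounds on $k^\epsilon$ and $k^\epsilon_x$ must be obtained through the factorization $k^\epsilon=W^\epsilon*V^\epsilon$ and Young's inequality — exploiting the $L^2$-regularity of $W^\epsilon$ and the $H^2$-regularity of $V^\epsilon$ — rather than by any direct estimate on $k^\epsilon$, and (ii) that one must check in each of the four $N$-groups which exponent of $\epsilon$ dominates, so that the two exponents $2a_W+4a_V$ and $a_W+3a_V$ displayed in Corollary~\ref{cor: general_case_estimate} really are the sharp outcomes of this reduction.
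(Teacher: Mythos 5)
Your proposal is correct and follows exactly the paper's own argument: bound all $V^\epsilon$-norms by $C\epsilon^{-a_V}$ via admissibility, bound $\norm{k^\epsilon}_{L^\infty(\R)}$ and $\norm{k^\epsilon_x}_{L^\infty(\R)}$ by $C\epsilon^{-a_W-a_V}$ via Young's inequality applied to the convolution structure, and substitute into Theorem~\ref{theorem: emp_measure_l2_estimate}. The bookkeeping of the exponents $2a_W+4a_V$ and $a_W+3a_V$ in the four $N$-groups is also carried out correctly.
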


\begin{proof}
Estimating all norms of \(V^\epsilon\) by \(\norm{V^\epsilon}_{H^2(\R)} \le C \epsilon^{-a_V}\) and using Young's inequality to find 
\begin{align*}
\norm{k^\epsilon}_{L^\infty(\R)} + \norm{k^\epsilon_x}_{L^\infty(\R)} 
\le 2 \norm{W^\epsilon}_{L^2(\R)} \norm{V^\epsilon}_{H^2(\R)} \le C \epsilon^{-a_W-a_V}.
\end{align*}
Hence the right hand side of the main inequality in Theorem~\ref{theorem: emp_measure_l2_estimate} can be estimated by 
\begin{equation*}
\frac{C}{N \epsilon^{2a_W+4a_V}}
+ \frac{C}{	N^{2\alpha} \epsilon^{2a_W+4a_V}}
+ \frac{C}{N^{\alpha+\frac{1}{2}} \epsilon^{a_W+3a_V}} 
+  \frac{C}{N^\gamma \epsilon^{2a_W+4a_V}}. 
\end{equation*}
\end{proof}

Now, that we have proven our main estimate, we are ready to demonstrate the relative entropy estimates by combining Theorem~\ref{theorem: emp_measure_l2_estimate} and Lemma~\ref{lemma: relative_entropy_L_2_estimate}.
We start with the first main result of this paper

\begin{proof}[Proof of Theorem~\ref{maintheorem}]
  We combine the assumptions of Theorem~\ref{theorem: emp_measure_l2_estimate} and the results from Lemma~\ref{lemma: relative_entropy_L_2_estimate}, Theorem~\ref{theorem: emp_measure_l2_estimate} and Corollary~\ref{cor: general_case_estimate}, to find a small $\beta_1 \le \beta_\alpha$ so that for \(0 <\beta \le \beta_1\), \(\epsilon= N^{-\beta}\) and small $0<\lambda\ll 1$
  \begin{equation*}
    {\mathcal H}_N(\rho_t^{N,\epsilon} \vert \rho_t^{\otimes N , \epsilon} )  \le \; \frac{C}{N^{\frac{1}{2}+\lambda}} =o \bigg(\frac{1}{\sqrt{N}} \bigg).
  \end{equation*}

  This allows us to demonstrate strong convergence in the \(L^\infty([0,T];L^1(\R))\)-norm. Indeed, let us recall the Csisz{\'a}r--Kullback--Pinsker inequality~\cite[Chapter~22]{Villani2009}, which states that for any \(m \in \N\) and function \(f,g \colon \R^m \to \R \) we have
  \begin{equation}\label{eq: CKP_inequality}
	\norm{f-g}_{L^1(\R^m)} \le \sqrt{ 2m {\mathcal H}_m( f \;  | \;g) }
  \end{equation}
  and the relative entropy inequaity~\cite[Lemma~3.9]{MoralMiclo2001}
  \begin{equation}\label{eq: realtive_entropy_marginal_inequality}
	{\mathcal H}_m( \rho^{N,m,\epsilon}_t \;  | \; \rho^{\otimes m,\epsilon}_t)
	\le 2{\mathcal H}_N( \rho^{N,\epsilon}_t \;  | \; \rho_t^{\otimes N,\epsilon})
  \end{equation}
  for \(m \le N \). Consequently,
  \begin{equation*}
	\norm{\rho_t^{N,2,\epsilon} - \rho_t^{ \epsilon} \otimes \rho_t^{ \epsilon}}_{L^1(\R^2)}^2
	\le 2{\mathcal{H}}_2(\rho_t^{N,1,\epsilon} \vert \rho_t^{ \epsilon})
	\le 4{\mathcal{H}}_N(\rho_t^{N,\epsilon} \vert \rho_t^{\otimes N , \epsilon})
	=o \bigg(\frac{1}{\sqrt{N}} \bigg).
  \end{equation*}

  In the case \(k^\epsilon = (W^\epsilon*V^\epsilon)_x\) the estimate~\eqref{eq: main_thm_entropy_estimate} is derived analogously. The key is to recognize that we actually derived an estimate on the derivative of \(V^\epsilon\), which we have not used so far. In the case of \(k^\epsilon = (W^\epsilon*V^\epsilon)_x\) we utilize it and as a result we obtain the same convergence rates. The estimate for the modulated energy follows also directly from equality~\eqref{eq: expextationmodulatedenergy_connection}, Young's inequality and an application of Theorem \ref{theorem: emp_measure_l2_estimate} for \(V^\epsilon\) and \(\hat{W}^\epsilon\) under the assumption that \(W^\epsilon,V^\epsilon\) are strongly admissible.
\end{proof}

\subsection{Special Choices of \texorpdfstring{\(W^\epsilon\)}{W} and \texorpdfstring{\(V^\epsilon\)}{V}}

We present a series of corollaries for Theorem~\ref{theorem: emp_measure_l2_estimate} for different 
choices of \(V^\epsilon\). In most applications we want to take a mollified sequence. In the special case \(V^\epsilon=J^\epsilon\) we obtain the following corollary.

\begin{corollary} \label{cor: mollifier_l2_estimate}
 
  Suppose Theorem~\ref{theorem: emp_measure_l2_estimate} holds true. Let \(V^\epsilon= J^\epsilon\) be a mollification, then for \(\epsilon = N^{-\beta}\) with some \(\beta < \beta_\alpha \) and \(\norm{k^\epsilon}_{L^\infty(\R)} \le \epsilon^{-a_k}\), \(\norm{k^\epsilon_x}_{L^\infty(\R)} \le \epsilon^{-a_{k}-1}\) for some \(a_k>0\), then we obtain the following \(L^2\)-estimate ,
  \begin{align*}
    & \E\bigg(    \sup\limits_{0 \le  t \le T} \norm{J^\epsilon*\mu_t^{N,\epsilon}  -J^\epsilon*\rho_t^{\epsilon}}_{L^2(\R)}^2\bigg) +   \frac{\sigma^2}{2}  \E  \bigg(   \int\limits_{0}^T  \norm{ J_x^\epsilon *\mu_s^{N,\epsilon} - J_x^\epsilon*\rho_s^{\epsilon}}_{L^2(\R)} \Id s \bigg)  \\
    &\quad\le \;  \frac{C}{N^{2\alpha-\beta}} + \frac{C}{N^{\alpha+1/2-4\beta}} +\frac{C }{N^{2\alpha - \beta -(2a_k+2)\beta} } +\frac{C}{N^{\alpha+\frac12 - 3\beta -(a_k+1)\beta} }\\&\quad\quad + \frac{C}{N^{2\alpha-3\beta-a_k \beta}} +\frac{C}{N^\gamma N ^{(3+2a_k)\beta}} \\
    &\quad\le \;  \frac{C }{N^{2\alpha - \beta -(2a_k+2)\beta} } +\frac{C}{N^{\alpha+\frac12 - 3\beta -(a_k+1)\beta} } + \frac{C}{N^{2\alpha-3\beta-a_k \beta}} +\frac{C}{N^\gamma N ^{(3+2a_k)\beta}}
  \end{align*}
  for a constant \(C\), which depends on $T$, $\sigma$, $\gamma$, $C_{\mathrm{BDG}}$. In particular if \(k \in L^\infty(\R)\) and \(k^\epsilon= (\zeta^\epsilon(J^\epsilon*k))*J^\epsilon\) the above estimate holds with \(\epsilon = N^{-\beta}\) and \(a_k=0\).
\end{corollary}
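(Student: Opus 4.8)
The plan is to specialize Theorem~\ref{theorem: emp_measure_l2_estimate} to the choice $V^\epsilon = J^\epsilon = \epsilon^{-1}J(\cdot/\epsilon)$ and to feed in the explicit $\epsilon$-dependence of the Sobolev norms of a mollifier. The starting point is the master inequality obtained in the (continuation of the) proof of Theorem~\ref{theorem: emp_measure_l2_estimate}, whose right-hand side is a sum of four groups of terms weighted by $N^{-1}$, $N^{-\gamma}$, $N^{-2\alpha}$ and $N^{-(\alpha+1/2)}$ and built from $\norm{V^\epsilon}_{L^2(\R)}$, $\norm{V^\epsilon}_{H^1(\R)}$, $\norm{V^\epsilon_x}_{L^2(\R)}$, $\norm{V^\epsilon_{xx}}_{L^2(\R)}$ together with $\norm{k^\epsilon}_{L^\infty(\R)}$ and $\norm{k^\epsilon_x}_{L^\infty(\R)}$; I would use the version of that inequality carrying the \emph{squared} $L^\infty$-norms of $k^\epsilon$ and $k^\epsilon_x$, since this is the one matching the exponents in the corollary.

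First I would record the elementary scaling identities $\norm{J^\epsilon}_{L^2(\R)}^2 = \epsilon^{-1}\norm{J}_{L^2(\R)}^2$, $\norm{J^\epsilon_x}_{L^2(\R)}^2 = \epsilon^{-3}\norm{J'}_{L^2(\R)}^2$, $\norm{J^\epsilon_{xx}}_{L^2(\R)}^2 = \epsilon^{-5}\norm{J''}_{L^2(\R)}^2$, and hence $\norm{J^\epsilon}_{H^1(\R)}^2 \le C\epsilon^{-3}$. Combining these with the hypotheses $\norm{k^\epsilon}_{L^\infty(\R)}\le \epsilon^{-a_k}$ and $\norm{k^\epsilon_x}_{L^\infty(\R)}\le \epsilon^{-a_k-1}$, every product appearing on the right-hand side of Theorem~\ref{theorem: emp_measure_l2_estimate} becomes a fixed negative power of $\epsilon$, and substituting $\epsilon = N^{-\beta}$ turns the four groups into powers of $N$ of orders (respectively) $N^{-(1-(3+2a_k)\beta)}$ and $N^{-(1-5\beta)}$; $N^{-(\gamma-(3+2a_k)\beta)}$; $N^{-(2\alpha-(2a_k+3)\beta)}$ and $N^{-(2\alpha-\beta)}$; and $N^{-(\alpha+1/2-(a_k+4)\beta)}$ and $N^{-(\alpha+1/2-4\beta)}$, the last $a_k$-free term being kept by estimating $\norm{k^\epsilon}_{L^\infty(\R)}\le C(1+\epsilon^{-a_k})$ in the $N^{-(\alpha+1/2)}$ block. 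This is exactly the six-term bound of the corollary, up to a harmless relabelling of the arbitrary exponent $\gamma$.

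To obtain the simplified four-term bound I would discard the dominated terms. Since $\alpha<\tfrac12$ and $\beta$ is small, the exponents $1-(3+2a_k)\beta$, $1-5\beta$ and $\alpha+\tfrac12-3\beta$ each exceed one of the surviving exponents $2\alpha-(3+a_k)\beta$ or $\alpha+\tfrac12-(a_k+4)\beta$, so the whole $N^{-1}$ block is absorbed; likewise $N^{-(2\alpha-\beta)}$ is absorbed into $N^{-(2\alpha-(2a_k+3)\beta)}$ and $N^{-(\alpha+1/2-4\beta)}$ into $N^{-(\alpha+1/2-(a_k+4)\beta)}$ (the latter two using only $a_k>0$). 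For the final ``in particular'' claim, take $k^\epsilon=(\zeta^\epsilon(J^\epsilon*k))*J^\epsilon$ with $k\in L^\infty(\R)$: Young's inequality and $|\zeta^\epsilon|\le1$ give $\norm{k^\epsilon}_{L^\infty(\R)}\le\norm{\zeta^\epsilon(J^\epsilon*k)}_{L^\infty(\R)}\norm{J^\epsilon}_{L^1(\R)}\le C\norm{k}_{L^\infty(\R)}$ and $\norm{k^\epsilon_x}_{L^\infty(\R)}\le\norm{\zeta^\epsilon(J^\epsilon*k)}_{L^\infty(\R)}\norm{J^\epsilon_x}_{L^1(\R)}\le C\norm{k}_{L^\infty(\R)}\epsilon^{-1}$, so the hypotheses hold with $a_k=0$.

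The main (and essentially only) obstacle is the bookkeeping: correctly matching each product of norms in Theorem~\ref{theorem: emp_measure_l2_estimate} to its power of $\epsilon$, then to its power of $N$ via $\epsilon=N^{-\beta}$, and checking the domination ordering among the resulting exponents uniformly for $\beta$ in a sufficiently small subinterval of $(0,\beta_\alpha)$. No analytic input beyond the mollifier scaling identities and Young's inequality for convolutions is required.
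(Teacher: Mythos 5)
Your proposal is correct and follows essentially the same route as the paper: derive the mollifier scalings $\norm{J^\epsilon}_{L^2}\sim\epsilon^{-1/2}$, $\norm{J^\epsilon_x}_{L^2}\sim\epsilon^{-3/2}$, $\norm{J^\epsilon_{xx}}_{L^2}\sim\epsilon^{-5/2}$ (the paper does this via the Fourier characterization of $H^m$, you by direct substitution — immaterial), substitute them together with the assumed $L^\infty$-bounds on $k^\epsilon$, $k^\epsilon_x$ into the final inequality of Theorem~\ref{theorem: emp_measure_l2_estimate} with $\epsilon=N^{-\beta}$, and obtain the $a_k=0$ case from $\norm{J^\epsilon}_{W^{m,1}}\lesssim\epsilon^{-m}$ and Young's inequality. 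Your exponent bookkeeping and the absorption of the dominated $N^{-1}$-block terms match the paper's computation.
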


\begin{proof}
  If $V^\epsilon=J^\epsilon$, we obtains easily that
  \begin{align*}
    \norm{J^\epsilon}_{H^{m}(\R)}
    &=\frac{1}{\epsilon} \norm{\mathcal{F}^{-1}\bigg[\bigg(1+\bigg|\frac{\xi}{\epsilon}\bigg|^2\bigg)^{\frac{m}{2}} \mathcal{F}[J](\xi)\bigg]\bigg(\frac{\cdot}{\epsilon}\bigg)}_{L^2(\R)} \\
    &=\frac{1}{\epsilon^{1/2+m}}\norm{\mathcal{F}^{-1}[(\epsilon^2+|\xi|^2)^{\frac{m}{2}} \mathcal{F}[J](\xi)]}_{L^2(\R)}\le \frac{C}{\epsilon^{1/2+m}}.
  \end{align*}
  Therefore, we obtained with $\epsilon=N^{-\beta}$,
  \begin{align*}
    & \E\bigg(    \sup\limits_{0 \le  t \le T} \norm{J^\epsilon*\mu_t^{N,\epsilon}  -J^\epsilon*\rho_t^{\epsilon}}_{L^2(\R)}^2\bigg) +   \frac{\sigma^2}{2}  \E  \bigg(   \int\limits_{0}^T  \norm{ J_x^\epsilon *\mu_s^{N,\epsilon} - J_x^\epsilon*\rho_s^{\epsilon}}_{L^2(\R)} \Id s \bigg)  \\
    &\quad\le  \;  \frac{C}{N^{2\alpha-\beta}} + \frac{C}{N^{\alpha+1/2-4\beta}} +\frac{C }{N^{2\alpha - \beta } \epsilon^{2a_k+2} } +\frac{C}{N^{\alpha+\frac12 - 3\beta } \epsilon^{a_k+1} } + \frac{C}{N^{2\alpha-3\beta} \epsilon^{a_k}} +\frac{C}{N^\gamma \epsilon^{3+2a_k}}
  \end{align*}
  The second claim follows by Young's inequality and the scaling of the mollifier. More precisely,
  \begin{align*}
	\norm{J^\epsilon}_{W^{m,1}(\R)}
	&=\frac{1}{\epsilon}\norm{\mathcal{F}^{-1}[(1+|\xi|^2)^{\frac{m}{2}} \mathcal{F}[J](\xi)]}_{L^1(\R)}
	\le \frac{C}{\epsilon^{m}}
  \end{align*}
\end{proof}

\begin{corollary} \label{cor: bounded_case}
  Suppose Assumptions~\ref{ass: convergence_in_probability},~\ref{ass: law_of_large_numbers} hold for \(\alpha \in (\frac{1}{4},\frac{1}{2})\) and suppose the bounded force \(k\) has the approximation \(k^\epsilon=W^\epsilon*V^\epsilon\) with \(W^\epsilon= \zeta^{\epsilon} (k*J^\epsilon)\) and \(V^\epsilon= J^\epsilon\). Then for \(\epsilon= N^{-\beta}\) and \(\beta < \min\bigg(\frac{\alpha}{6},\frac{1}{10}(4\alpha-1), \beta_\alpha\bigg)\), there exists an \(0 < \lambda \ll 1\) such that
  \begin{equation*}
    \sup\limits_{t \in [0,T]}  {\mathcal{H}}_N(\rho_t^{N,\epsilon} \vert \rho_t^{\otimes N , \epsilon})
    \le \frac{C}{N^{\frac12+\lambda}}
  \end{equation*}
  for a constant \(C\), which depends on $T$, $\sigma$, $\gamma$, $C_{\mathrm{BDG}}$.
\end{corollary}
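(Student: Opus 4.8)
The plan is to feed the mollified $L^2$-estimate of Corollary~\ref{cor: mollifier_l2_estimate} into the relative-entropy inequality of Lemma~\ref{lemma: relative_entropy_L_2_estimate} and then to choose the cut-off parameter $\beta$ small enough. First I would check that the prescribed approximation $k^\epsilon = W^\epsilon*V^\epsilon$, with $W^\epsilon = \zeta^\epsilon(k*J^\epsilon)$ and $V^\epsilon = J^\epsilon$, fits the framework: $V^\epsilon = J^\epsilon \in H^2(\R)$ with $\norm{J^\epsilon}_{H^2(\R)} \le C\epsilon^{-5/2}$ as in the proof of Corollary~\ref{cor: mollifier_l2_estimate}, while $\zeta^\epsilon(x) = \zeta(\epsilon x)$ is bounded by $1$ and supported in $B(0,2/\epsilon)$, so together with $\norm{k*J^\epsilon}_{L^\infty(\R)} \le \norm{k}_{L^\infty(\R)}$ we get $\norm{W^\epsilon}_{L^2(\R)}^2 \le C\epsilon^{-1} \le C\epsilon^{-2}$; moreover $\norm{k^\epsilon}_{L^\infty(\R)} \le \norm{k}_{L^\infty(\R)}$ and $\norm{k_x^\epsilon}_{L^\infty(\R)} \le \norm{k}_{L^\infty(\R)}\norm{J_x^\epsilon}_{L^1(\R)} \le C\epsilon^{-1}$, so Corollary~\ref{cor: mollifier_l2_estimate} is applicable with $a_k = 0$.

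Next I would apply Lemma~\ref{lemma: relative_entropy_L_2_estimate}: dropping the non-negative Fisher-information term on the left and estimating the time integral up to $t$ by the one up to $T$ gives, for every $t \in [0,T]$,
\[
  \mathcal{H}_N(\rho_t^{N,\epsilon} \vert \rho_t^{\otimes N,\epsilon}) \le \frac{\norm{W^\epsilon}_{L^2(\R)}^2}{\sigma^2}\, \E\bigg(\int_0^T \norm{J^\epsilon*(\mu_s^{N,\epsilon} - \rho_s^\epsilon)}_{L^2(\R)}^2 \Id s\bigg),
\]
whose right-hand side is independent of $t$, which is what produces the supremum in the statement. Since $\E(\int_0^T \norm{J^\epsilon*(\mu_s^{N,\epsilon}-\rho_s^\epsilon)}_{L^2(\R)}^2 \Id s) \le T\,\E(\sup_{0 \le t \le T}\norm{J^\epsilon*\mu_t^{N,\epsilon} - J^\epsilon*\rho_t^\epsilon}_{L^2(\R)}^2)$, Corollary~\ref{cor: mollifier_l2_estimate} with $a_k = 0$ and $\epsilon = N^{-\beta}$ bounds this by $C N^{-(2\alpha-3\beta)} + C N^{-(\alpha+1/2-4\beta)} + C N^{-(\gamma-3\beta)}$ for arbitrary $\gamma > 0$. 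Multiplying by $\norm{W^\epsilon}_{L^2(\R)}^2/\sigma^2 \le C\epsilon^{-2} = CN^{2\beta}$ then yields
\[
  \sup_{t \in [0,T]} \mathcal{H}_N(\rho_t^{N,\epsilon} \vert \rho_t^{\otimes N,\epsilon}) \le \frac{C}{N^{2\alpha - 5\beta}} + \frac{C}{N^{\alpha + 1/2 - 6\beta}} + \frac{C}{N^{\gamma - 5\beta}}.
\]

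The last step is to optimize $\beta$: the first exponent exceeds $1/2$ exactly when $\beta < \frac{4\alpha - 1}{10}$, the second exactly when $\beta < \frac{\alpha}{6}$, and the third is harmless for $\gamma$ large; additionally $\beta < \beta_\alpha$ is imposed so that Assumptions~\ref{ass: convergence_in_probability} and~\ref{ass: law_of_large_numbers}, on which Theorem~\ref{theorem: emp_measure_l2_estimate} and hence Corollary~\ref{cor: mollifier_l2_estimate} rest, are in force. Thus for $\beta < \min\big(\frac{\alpha}{6}, \frac{4\alpha-1}{10}, \beta_\alpha\big)$ there is $\lambda > 0$ with all three exponents $\ge \frac12 + \lambda$, which is the claim. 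I expect the argument to be essentially bookkeeping of powers of $N$ and $\epsilon = N^{-\beta}$; the only genuinely substantive point is that the $N^{-2\alpha}$-scale term from Theorem~\ref{theorem: emp_measure_l2_estimate}, once amplified by the $\epsilon^{-2}$ factor coming from $\norm{W^\epsilon}_{L^2(\R)}^2$, must still beat $N^{-1/2}$, which is why one needs $2\alpha > \tfrac12$, i.e.\ $\alpha \in (\tfrac14,\tfrac12)$.
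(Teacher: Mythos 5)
Your proposal is correct and follows exactly the route the paper takes: bound $\norm{W^\epsilon}_{L^2(\R)}^2$ by $C\epsilon^{-2}=CN^{2\beta}$, feed Corollary~\ref{cor: mollifier_l2_estimate} (with $a_k=0$) into inequality~\eqref{entropyW} from Lemma~\ref{lemma: relative_entropy_L_2_estimate}, and track powers of $N$ to get the conditions $\beta<\frac{4\alpha-1}{10}$, $\beta<\frac{\alpha}{6}$, $\beta<\beta_\alpha$. Your bookkeeping in fact supplies the exponent computations that the paper's proof leaves implicit, and it reproduces the stated thresholds exactly.
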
 

\begin{proof}
  Let us start by estimating \(\norm{W^\epsilon}_{L^2(\R)}^2\) in inequality~\eqref{entropyW},
  \begin{equation*}
    \norm{W^\epsilon}_{L^2(\R)}^2
    = \int_{\R} |\zeta^{\epsilon}(x)|^2 \bigg|\int_{\R}  k(x-y)J^\epsilon(y) \Id y \bigg|^2 \Id x
    \le 4\epsilon^{-2} \norm{k}_{L^\infty(\R)}
    = 4N^{2\beta} \norm{k}_{L^\infty(\R)}
  \end{equation*}
  Now, applying Corollary~\ref{cor: mollifier_l2_estimate} to inequality~\eqref{entropyW}, keeping track of the powers, we obtain the result. More precisely, notice that we have to first fix $\alpha \in (\frac{1}{4},\frac{1}{2})$, then choos a $\beta_J$ such that the terms for $0<\beta\leq\beta_J$ smaller than $\frac{C}{N^{\frac{1}{2}+\lambda}}$, and then fix $\gamma$ such that the estimate holds.
\end{proof}

Next, we provide a similar corollary in the case the force \(k\) is a potential and has a convolution structure.

\begin{corollary}\label{cor: potential_case}
  Suppose Assumptions~\ref{ass: convergence_in_probability},~\ref{ass: law_of_large_numbers} hold for \(\alpha \in (\frac{1}{4},\frac{1}{2})\). Let the force \(k\) be given by a potential \(k=(W*V)_x\) with \(W,V \in H^{\frac{1}{2}}(\R)\) and its approximation is given by \(k^\epsilon= ((W*J^\epsilon)*(V*J^\epsilon))_x\). Then for \(\epsilon= N^{-\beta}\) and \(\beta < \min\bigg(\frac{1}{3},\alpha-\frac{1}{4},\beta_{\alpha}\bigg)\), there exists a \(0 < \lambda \ll 1\) such that
  \begin{align*}
    \sup\limits_{t \in [0,T]}  {\mathcal{H}}_N(\rho_t^{N,\epsilon} \vert \rho_t^{\otimes N , \epsilon})
    &\le \frac{C}{N^{\frac12+\lambda}} , \\
	\sup\limits_{t \in [0,T]} |{\mathcal K}_{N}  (\rho_t^{N,\epsilon}\vert \rho_t^{\otimes N, \epsilon})|
	 &\le \frac{C}{N^{\frac12+\lambda}} . 
  \end{align*}
  for a constant \(C\), which depends on $T$, $\sigma$, $\gamma$, $C_{\mathrm{BDG}}$.
\end{corollary}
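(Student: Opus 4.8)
The idea is to recognise the given data as a special case of the potential--field setting of Theorem~\ref{maintheorem}, with $W^\epsilon:=W*J^\epsilon$ and $V^\epsilon:=V*J^\epsilon$, so that $k^\epsilon=(W^\epsilon*V^\epsilon)_x$, and then to read off the convergence rate by tracking the $\epsilon$--dependence of every Sobolev norm entering Theorem~\ref{theorem: emp_measure_l2_estimate} and Lemma~\ref{lemma: relative_entropy_L_2_estimate}. The first task is the Sobolev bookkeeping. For $U\in H^{1/2}(\R)$ one has, by splitting the weight $(1+|\xi|^2)^m=(1+|\xi|^2)^{1/2}(1+|\xi|^2)^{m-1/2}$ on the Fourier side, putting the first factor on $\widehat U$ and using the fast decay $\sup_\xi (1+|\xi|^2)^{m-1/2}|\widehat J(\epsilon\xi)|^2\le C\epsilon^{-(2m-1)}$, the estimates $\norm{U*J^\epsilon}_{L^2(\R)}\le\norm{U}_{L^2(\R)}$ and $\norm{U*J^\epsilon}_{H^m(\R)}\le C\epsilon^{-(m-1/2)}\norm{U}_{H^{1/2}(\R)}$ for $m\ge 1/2$. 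Applied to $W$ and $V$ this gives $W^\epsilon,V^\epsilon\in H^2(\R)$ with $\norm{W^\epsilon}_{H^2(\R)},\norm{V^\epsilon}_{H^2(\R)}\le C\epsilon^{-3/2}$, so both are \emph{strongly admissible} in the sense of Definition~\ref{def: admissible}, while $\norm{W^\epsilon}_{L^2(\R)},\norm{V^\epsilon}_{L^2(\R)}\le C$; moreover, since $\widehat{k}(\xi)=2\pi i\xi\,\widehat W(\xi)\widehat V(\xi)$ and, by Cauchy--Schwarz,
\[
\int_\R|\xi|\,|\widehat W||\widehat V|\Id\xi\le \Big(\int_\R(1+|\xi|^2)^{1/2}|\widehat W|^2\Big)^{1/2}\Big(\int_\R(1+|\xi|^2)^{1/2}|\widehat V|^2\Big)^{1/2}=\norm{W}_{H^{1/2}(\R)}\norm{V}_{H^{1/2}(\R)},
\]
the kernel $k=(W*V)_x$ lies in $L^\infty(\R)$, whence $\norm{k^\epsilon}_{L^\infty(\R)}=\norm{k*J^\epsilon*J^\epsilon}_{L^\infty(\R)}\le C$ uniformly in $\epsilon$, and, absorbing one power of $\xi$ into the mollifier, $\norm{k^\epsilon_x}_{L^\infty(\R)}\le C\epsilon^{-1}$. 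This boundedness of $k^\epsilon$ is exactly what makes the rate here better than in the generic admissible case.

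For the relative entropy I would then apply Lemma~\ref{lemma: relative_entropy_L_2_estimate} in the potential form $k^\epsilon=W^\epsilon*V^\epsilon_x$ (cf.\ the remark following that lemma): this bounds $\sup_{t\le T}\mathcal H_N(\rho_t^{N,\epsilon}\vert\rho_t^{\otimes N,\epsilon})$ by $\sigma^{-2}\norm{W^\epsilon}_{L^2(\R)}^2\,\E\int_0^T\norm{V^\epsilon_x*(\mu_s^{N,\epsilon}-\rho_s^\epsilon)}_{L^2(\R)}^2\Id s$, where the prefactor is bounded and the integral is precisely the dissipation term on the left of Theorem~\ref{theorem: emp_measure_l2_estimate} applied with $V^\epsilon$. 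Feeding the bounds $\norm{V^\epsilon}_{H^1}^2\le C\epsilon^{-1}$, $\norm{V^\epsilon_{xx}}_{L^2}^2\le C\epsilon^{-3}$, $\norm{V^\epsilon}_{L^2}^2\le C$, $\norm{k^\epsilon}_{L^\infty}\le C$, $\norm{k^\epsilon_x}_{L^\infty}\le C\epsilon^{-1}$ into the right-hand side of Theorem~\ref{theorem: emp_measure_l2_estimate} turns every summand into $C N^{-1}\epsilon^{-a}$, $C N^{-2\alpha}\epsilon^{-a'}$, $C N^{-\alpha-1/2}\epsilon^{-a''}$ or $C N^{-\gamma}\epsilon^{-a'''}$ with explicit $a,a',a'',a'''>0$. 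With $\epsilon=N^{-\beta}$ these read $C N^{-(1-a\beta)}$, $C N^{-(2\alpha-a'\beta)}$, $C N^{-(\alpha+1/2-a''\beta)}$, $C N^{-(\gamma-a'''\beta)}$, and choosing $\gamma$ large and then $\beta<\min(1/3,\alpha-1/4,\beta_\alpha)$ makes every exponent exceed $1/2$ by a fixed $\lambda>0$, which yields $\sup_{t\le T}\mathcal H_N\le C N^{-1/2-\lambda}$.

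For the modulated energy I would start from the identity~\eqref{eq: expextationmodulatedenergy_connection}, $\mathcal K_N(\rho_t^{N,\epsilon}\vert\rho_t^{\otimes N,\epsilon})=\sigma^{-2}\E\,\qv{\widehat W^\epsilon*(\mu_t^{N,\epsilon}-\rho_t^\epsilon),\,V^\epsilon*(\mu_t^{N,\epsilon}-\rho_t^\epsilon)}$, apply Cauchy--Schwarz and Young to reduce it to $\E\sup_{t\le T}\norm{\widehat W^\epsilon*(\mu_t^{N,\epsilon}-\rho_t^\epsilon)}_{L^2(\R)}^2$ and $\E\sup_{t\le T}\norm{V^\epsilon*(\mu_t^{N,\epsilon}-\rho_t^\epsilon)}_{L^2(\R)}^2$, and then invoke Theorem~\ref{theorem: emp_measure_l2_estimate} twice, once with $V^\epsilon$ and once with $\widehat W^\epsilon$ (admissible since $\norm{\widehat W^\epsilon}_{H^2(\R)}=\norm{W^\epsilon}_{H^2(\R)}\le C\epsilon^{-3/2}$); the same bookkeeping as above then gives $\sup_{t\le T}|\mathcal K_N|\le C N^{-1/2-\lambda}$ in the same range of $\beta$.

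The step I expect to be the main obstacle is the bookkeeping itself: the whole rate, and hence the admissible interval for $\beta$, is governed by the precise $\epsilon$--powers of $\norm{V^\epsilon}_{H^2}$, $\norm{k^\epsilon}_{L^\infty}$ and $\norm{k^\epsilon_x}_{L^\infty}$, and in particular by the borderline fact that $k=(W*V)_x$ is bounded because $W,V\in H^{1/2}(\R)$ sit exactly at the endpoint of the one-dimensional Sobolev multiplication inequality; losing even a fractional derivative there would strictly shrink the range of $\beta$ for which the $o(N^{-1/2})$ rate survives.
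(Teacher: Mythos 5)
Your proposal is correct and follows essentially the same route as the paper: establish $\norm{W*J^\epsilon}_{L^2(\R)}\le C$, $\norm{V*J^\epsilon}_{H^1(\R)}\le C\epsilon^{-1/2}$, $\norm{(V*J^\epsilon)_{xx}}_{L^2(\R)}\le C\epsilon^{-3/2}$, $\norm{k^\epsilon}_{L^\infty(\R)}\le C$ and $\norm{k^\epsilon_x}_{L^\infty(\R)}\le C\epsilon^{-1}$, feed them into Lemma~\ref{lemma: relative_entropy_L_2_estimate} (in the form $k^\epsilon=W^\epsilon*V^\epsilon_x$) and Theorem~\ref{theorem: emp_measure_l2_estimate}, and handle the modulated energy via~\eqref{eq: expextationmodulatedenergy_connection} with a second application of the theorem to $\hat W^\epsilon$. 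The only difference is cosmetic — you obtain the mollification bounds by a Fourier-side supremum while the paper uses Young's inequality with $\norm{J^\epsilon}_{W^{m,1}(\R)}$ — and, like the paper, you leave implicit the final check that the stated range of $\beta$ pushes every exponent strictly above $1/2$.
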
  

\begin{proof}
  Since \(W\in H^{\frac{1}{2}}(\R)\) we know that \(W*J^\epsilon \in L^2(\R)\) and therefore we only need to estimate the \(L^2\)-norm for \(V^\epsilon_x = (V*J^\epsilon)_x \) in inequality~\eqref{entropyW} to obtain the convergence rates. We emphasize that in Theorem~\ref{theorem: emp_measure_l2_estimate} we also obtained an estimate on the gradient \(V^\epsilon\). Consequently, we can use Theorem~\ref{theorem: emp_measure_l2_estimate} for the function \(V^\epsilon =  V*J^\epsilon\). By the estimate
\begin{align*}
	\norm{J^\epsilon}_{W^{m,1}(\R)}
	&=\frac{1}{\epsilon}\norm{\mathcal{F}^{-1}[(1+|\xi|^2)^{\frac{m}{2}} \mathcal{F}[J](\xi)]}_{L^1(\R)}
	\le \frac{C}{\epsilon^{m}}\\
		\norm{J^\epsilon_{x}}_{W^{m,1}(\R)}
	&=\frac{1}{\epsilon}\norm{\mathcal{F}^{-1}[(1+|\xi|^2)^{\frac{m}{2}} \mathcal{F}[J_x](\xi)]}_{L^1(\R)}
	\le \frac{C}{\epsilon^{m+1}}
\end{align*}
for any \(m \ge 0\)
we know that 
\begin{align}
&\nonumber	\norm{V^\epsilon}_{L^2(\R)}\leq \norm{V}_{L^2(\R)}\leq C\\
&\nonumber		\norm{V_x^\epsilon}_{L^2(\R)}\leq \norm{V}_{H^\frac12(\R)}\norm{J^\epsilon}_{W^{\frac{1}{2},1}(\R)}\leq \frac{C}{\epsilon^\frac12}\\
&\label{eq: l2_bound_V}			\norm{V_{xx}^\epsilon}_{L^2(\R)}\leq \norm{V}_{H^\frac12(\R)}\norm{J_x^\epsilon}_{W^{\frac12,1}(\R)}\leq \frac{C}{\epsilon^\frac32}\\
&\nonumber \norm{k^\epsilon}_{L^\infty(\R)}\leq \norm{W^\epsilon}_{H^{\frac12}(\R)}\norm{V^\epsilon}_{H^{\frac12}(\R)}\leq C\\
&\nonumber \norm{k^\epsilon_x}_{L^\infty(\R)}\leq \norm{W^\epsilon}_{H^1(\R)}\norm{V^\epsilon}_{H^1(\R)}\leq \frac{C}{\varepsilon}.
\end{align}
Plugging in all estimates with \(\epsilon=N^{-\beta}\) into Theorem~\ref{theorem: emp_measure_l2_estimate} and having equality~\eqref{eq: expextationmodulatedenergy_connection} in mind we obtain the rate of \(\beta\) and the estimate on the modulated energy. 
\end{proof}
We have now shown in two cases how to derive explicit estimates on the relative entropy \({\mathcal{H}}_N(\rho_t^{N,\epsilon} \vert \rho_t^{\otimes N , \epsilon})\) with the help of Theorem~\ref{theorem: emp_measure_l2_estimate}. 
 In general, if the function $W^\epsilon,V^\epsilon$ have low regularity, we need to mollify them to make them admissible. Hence, we borrow the necessary regularity from \(J\) and consequently, get higher rates of \(N\) in our estimates. Compare for instance Corollary~\ref{cor: bounded_case} and Corollary~\ref{cor: potential_case}. Therefore the estimate by using the regularity of the $J^\epsilon$ term, will lead to weaker convergence rates. The benefit is of course that one does not require a potential field and the convolution structure of the potential.

\section{De-regularization of the high dimensional PDE and the limiting PDE }\label{sec: PDE convergence}

The goal of this section is to prove Theorem~\ref{maintheorem2}, i.e. the strong form of propagation of chaos on the PDE level in the \(L^1\)-norm. For the de-regularization of Liouville equation \eqref{eq: regularized_Liouville_equation} we need \(k\in L^\infty(\R)\). We take the following approximation \(k^\epsilon= (\zeta^\epsilon(k*J^\epsilon))*J^\epsilon)\). We need convergence results between \(\rho_t^{N,1,\epsilon}\) and \( \rho_t^{N,1}\) as well as \(\rho_t^\epsilon\) and \(\rho_t\). The latter convergence was shown in~\cite[Section~3]{Nikolaev2023}. More precisely,
\begin{equation} \label{eq: l1_coonv_mean_pde}
   \lim\limits_{\epsilon \to 0} \norm{\rho^\epsilon - \rho}_{L^1([0,T];L^1(\R))} = 0 .
\end{equation}

It remains to show that the approximated Liouville equation converges in entropy to the Liouville equation. An application of inequality~\eqref{eq: CKP_inequality} implies also the \(L^1\)-convergence. 

\begin{lemma}\label{lemma: Lioville_equations_approximation_convergence}
  Let \(k\in L^\infty(\R)\), \(\rho^{N, \epsilon}\) be the solution of the regularized Liouville equation~\eqref{eq: regularized_Liouville_equation} and \(\rho^{N}\) the solution of the Liouville equation~\eqref{eq: Liouville_equation}. Then, we have
  \begin{align*}
    & \;  \sup\limits_{t \in [0,T]} {\mathcal H}_N( \rho^{N,\epsilon}_t \;  | \; \rho^{N}_t) +  \frac{\sigma^2}{4N}   \sum\limits_{i=1}^N \int\limits_0^T \int\limits_{\R^N} \rho_s^{N,\epsilon} 	  \bigg|\partial_{x_i} \log \bigg(\frac{\rho^{N,\epsilon}_s}{\rho^{N}_s} \bigg) \bigg|^2 \Id \mathrm{X}^N \Id s   \\
    \le &\;   C T \norm{k}_{L^\infty(\R)}^2 \sup\limits_{t \in [0,T]}  \sqrt{ {\mathcal H}_N( \rho^{N,\epsilon}_t \;  | \; \rho^{\otimes N,  \epsilon}_t)   }
    + 2C \norm{k}_{L^\infty(\R)}^2   \norm{\rho_s^\epsilon - \rho_s}_{L^1([0,T];L^1(\R))}  \\
    &+   \int\limits_0^T \int\limits_{\R^2} |k (x_{1} -x_{2} ) -k^\epsilon (x_{1} -x_{2} )|^2  \rho_s(x_1) \rho_s(x_2)  \Id x_1 \Id x_2 \Id s .
  \end{align*}
  In particular, the last term vanishes by dominated convergence.
\end{lemma}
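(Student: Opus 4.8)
The plan is to reproduce the entropy--dissipation computation from the proof of Lemma~\ref{lemma: relative_entropy_L_2_estimate}, but now comparing the two Liouville equations \eqref{eq: regularized_Liouville_equation} and \eqref{eq: Liouville_equation} directly rather than comparing with the chaotic law. First I would differentiate $t\mapsto \mathcal{H}_N(\rho^{N,\epsilon}_t\,|\,\rho^N_t)$, insert the two PDEs (which is licit since $k\in L^\infty(\R)$ makes both weakly well-posed in the sense of Definition~\ref{def: weak_solution_Lioville_equation}), integrate by parts, and use $\int_{\R^N}\partial_t\rho^{N,\epsilon}_t=0$ from mass conservation. Exactly as in Lemma~\ref{lemma: relative_entropy_L_2_estimate} this yields the nonpositive Fisher-information term $-\tfrac{\sigma^2}{2N}\sum_{i=1}^N\int_{\R^N}|\partial_{x_i}\log(\rho^{N,\epsilon}_t/\rho^N_t)|^2\rho^{N,\epsilon}_t$ together with the cross term
\begin{equation*}
  -\frac{1}{N^2}\sum_{i,j=1}^N\int_{\R^N}\bigl(k^\epsilon(x_i-x_j)-k(x_i-x_j)\bigr)\,\rho^{N,\epsilon}_t\,\partial_{x_i}\log\Bigl(\tfrac{\rho^{N,\epsilon}_t}{\rho^N_t}\Bigr)\Id\mathrm{X}^N .
\end{equation*}
Young's inequality then bounds the cross term by $\tfrac{\sigma^2}{4N}\sum_i\int_{\R^N}|\partial_{x_i}\log(\cdot)|^2\rho^{N,\epsilon}_t$, absorbed into the left-hand side, plus the remainder $\tfrac{1}{\sigma^2}\E\bigl\langle\mu^{N,\epsilon}_t,\,|(k^\epsilon-k)*\mu^{N,\epsilon}_t|^2\bigr\rangle$.

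The heart of the argument is to estimate this remainder. Since $k^\epsilon\to k$ only in $L^p_{\mathrm{loc}}$ and never uniformly, no pointwise control of $(k^\epsilon-k)*\mu^{N,\epsilon}_t$ is available; instead I would apply Jensen's inequality to pull the square inside the convolution,
\begin{equation*}
  \bigl\langle\mu^{N,\epsilon}_t,\,|(k^\epsilon-k)*\mu^{N,\epsilon}_t|^2\bigr\rangle\le\bigl\langle\mu^{N,\epsilon}_t,\,|k^\epsilon-k|^2*\mu^{N,\epsilon}_t\bigr\rangle=\frac1{N^2}\sum_{i,j=1}^N|k^\epsilon-k|^2(X^{i,\epsilon}_t-X^{j,\epsilon}_t),
\end{equation*}
and take expectations: the diagonal $i=j$ then contributes only a harmless term of order $\norm{k}_{L^\infty(\R)}^2/N$, while the off-diagonal part is $\tfrac{N-1}{N}\int_{\R^2}|k^\epsilon-k|^2(x_1-x_2)\,\rho^{N,2,\epsilon}_t(x_1,x_2)\Id x_1\Id x_2$. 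Now I would swap the reference measures one at a time: replacing $\rho^{N,2,\epsilon}_t$ by $\rho^\epsilon_t\otimes\rho^\epsilon_t$ costs at most $\norm{|k^\epsilon-k|^2}_{L^\infty(\R)}\norm{\rho^{N,2,\epsilon}_t-\rho^\epsilon_t\otimes\rho^\epsilon_t}_{L^1(\R^2)}\le C\norm{k}_{L^\infty(\R)}^2\sqrt{\mathcal{H}_N(\rho^{N,\epsilon}_t\,|\,\rho^{\otimes N,\epsilon}_t)}$ by the Csisz{\'a}r--Kullback--Pinsker inequality \eqref{eq: CKP_inequality} combined with \eqref{eq: realtive_entropy_marginal_inequality}; replacing $\rho^\epsilon_t\otimes\rho^\epsilon_t$ by $\rho_t\otimes\rho_t$ costs at most $C\norm{k}_{L^\infty(\R)}^2\norm{\rho^\epsilon_t-\rho_t}_{L^1(\R)}$ (using $\norm{\rho^\epsilon_t}_{L^1}=\norm{\rho_t}_{L^1}=1$); and what survives is precisely $\int_{\R^2}|k-k^\epsilon|^2(x_1-x_2)\rho_t(x_1)\rho_t(x_2)\Id x_1\Id x_2$.

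Finally I would integrate the resulting differential inequality on $[0,t]$, using that $\mathcal{H}_N(\rho^{N,\epsilon}_0\,|\,\rho^N_0)=0$ since both start from $\rho_0^{\otimes N}$, and take the supremum over $t\in[0,T]$; because the bounds by $\sqrt{\mathcal{H}_N(\cdot\,|\,\rho^{\otimes N,\epsilon})}$, $\norm{\rho^\epsilon-\rho}_{L^1}$ and $\int|k-k^\epsilon|^2\rho\rho$ do not involve the quantity $\mathcal{H}_N(\rho^{N,\epsilon}\,|\,\rho^N)$ being estimated, no Gronwall argument is needed and the claimed inequality follows (with $C$ absorbing universal factors), the prefactor $T$ and the norm $\norm{\rho^\epsilon-\rho}_{L^1([0,T];L^1(\R))}$ being produced by the time integration. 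For the last assertion, the integrand $|k-k^\epsilon|^2(x_1-x_2)\rho_s(x_1)\rho_s(x_2)$ is dominated by $4\norm{k}_{L^\infty(\R)}^2\rho_s(x_1)\rho_s(x_2)\in L^1([0,T]\times\R^2)$ and tends to $0$ a.e.\ along the mollification, so dominated convergence applies. The main obstacle is the second paragraph: because $k^\epsilon$ converges to $k$ only in an $L^p$ sense, one is forced to reduce the remainder to the strongly convergent object $|k^\epsilon-k|^2$ tested against the smooth one- and two-particle densities, and this reduction is exactly what introduces the factor $\sqrt{\mathcal{H}_N(\rho^{N,\epsilon}_t\,|\,\rho^{\otimes N,\epsilon}_t)}$ tying the lemma back to Theorem~\ref{maintheorem}.
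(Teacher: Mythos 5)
Your proposal is correct and follows essentially the same route as the paper: differentiate the relative entropy, absorb half the Fisher information via Young's inequality, reduce the remainder to $\int|k-k^\epsilon|^2$ tested against the two-particle marginal by exchangeability, and then swap $\rho^{N,2,\epsilon}\to\rho^\epsilon\otimes\rho^\epsilon\to\rho\otimes\rho$ using Csisz\'ar--Kullback--Pinsker with \eqref{eq: realtive_entropy_marginal_inequality} and the $L^1$-convergence of $\rho^\epsilon$. Your explicit treatment of the diagonal $i=j$ term (of order $\norm{k}_{L^\infty(\R)}^2/N$) is in fact slightly more careful than the paper, which silently identifies the full double sum with the off-diagonal marginal integral; this extra harmless term does not affect how the lemma is used.
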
 

\begin{proof}
  We start by computing the time derivative of \(\rho^{N,\epsilon}_t \log \bigg(\frac{\rho^{N,\epsilon}_t}{\rho^{N}_t} \bigg)\). We have
  \begin{align*}
    & {\mathcal H}_N( \rho^{N,\epsilon}_t \;  | \; \rho^{N}_t)=  {\mathcal H}_N( \rho^{N,\epsilon} \;  | \; \rho^{N})(0) + \int\limits_0^t \frac{\Id}{\Id s} {\mathcal H}_N( \rho^{N,\epsilon}_s \;  | \; \rho^{N}_s)  \Id s  \\
    = &\;-  \frac{\sigma^2}{2N}  \sum\limits_{i=1}^N   \int\limits_0^t \int\limits_{\R^N}  \partial_{x_i} \rho_s^{N,\epsilon} \partial_{x_i} \log \bigg(\frac{\rho^{N,\epsilon}_s}{\rho^{N}_s} \bigg) \Id \mathrm{X}^N \Id s   \\
    & -  \frac{1}{N}  \sum\limits_{i=1}^N   \int\limits_0^t \int\limits_{\R^N}
    \Bigg ( \rho^{N,\epsilon}_s \frac{1}{N} \sum\limits_{j=1}^N  k^\epsilon (x_{i} -x_{j} )  \Bigg) \partial_{x_i} \log \bigg(\frac{\rho^{N,\epsilon}_s}{\rho^{N}_s} \bigg) \Id \mathrm{X}^N \Id s  \\
    &+  \frac{\sigma^2}{2N}  \sum\limits_{i=1}^N   \int\limits_0^t \int\limits_{\R^N}  \partial_{x_i} \rho_s^N \partial_{x_i} \bigg(\frac{\rho^{N,\epsilon}_s}{\rho^{N}_s} \bigg) \Id \mathrm{X}^N \Id s  \\
    &+ \frac{1}{N}   \sum\limits_{i=1}^N  \int\limits_0^t \int\limits_{\R^N} \Bigg ( \rho^N_s \frac{1}{N} \sum\limits_{j=1}^N  k (x_{i} -x_{j} ) \Bigg) \partial_{x_i}
    \bigg(\frac{\rho^{N,\epsilon}_s}{\rho^{N}_s} \bigg)
    \Id \mathrm{X}^N \Id s \\
    = &\;  -  \frac{\sigma^2}{2N}  \sum\limits_{i=1}^N  \int\limits_0^t \int\limits_{\R^N} \rho_s^{N,\epsilon}   \bigg|\partial_{x_i} \log \bigg(\frac{\rho^{N,\epsilon}_s}{\rho^{N}_s} \bigg) \bigg|^2 \Id \mathrm{X}^N \Id s  \\
    &+   \frac{1}{N^2} \sum\limits_{i,j=1}^N \int\limits_0^t \int\limits_{\R^N}  ( k (x_{i} -x_{j} ) -k^\epsilon (x_{i} -x_{j} )) \rho^{N,\epsilon}_s  \partial_{x_i} \log
    \bigg(\frac{\rho^{N,\epsilon}_s}{\rho^{N}_s} \bigg)
    \Id \mathrm{X}^N \Id s \\
   \le  &\; -   \frac{\sigma^2}{4N}   \sum\limits_{i=1}^N \int\limits_0^t \int\limits_{\R^N} \rho_s^{N,\epsilon} 	  \bigg|\partial_{x_i} \log \bigg(\frac{\rho^{N,\epsilon}_s}{\rho^{N}_s} \bigg) \bigg|^2 \Id \mathrm{X}^N \Id s  \\
    &+  \frac{1}{\sigma^2 N^2} \sum\limits_{i,j=1}^N \int\limits_0^t \int\limits_{\R^N} |k (x_{i} -x_{j} ) -k^\epsilon (x_{i} -x_{j} )|^2  \rho^{N,\epsilon}_s
    \Id \mathrm{X}^N \Id s .
  \end{align*}
  Now, it is enough to show, that the last term vanishes for \(N \to \infty\) and consequently for \(\epsilon \to 0 \). We start by using the fact that the particle system~\eqref{eq: regularized_particle_system} is exchangeable. We obtain
  \begin{align*}
    &  \;  \frac{1}{\sigma^2 N^2} \sum\limits_{i,j=1}^N \int\limits_0^t \int\limits_{\R^N} |k (x_{i} -x_{j} ) -k^\epsilon (x_{i} -x_{j} )|^2  \rho^{N,\epsilon}_s (\mathrm{X}^N)
    \Id \mathrm{X}^N \Id s \\
    =&\;  \frac{1}{\sigma^2}  \int\limits_0^t \int\limits_{\R^2} |k (x_{1} -x_{2} ) -k^\epsilon (x_{1} -x_{2} )|^2  \rho^{N,2, \epsilon}_s (x_1,x_2)
    \Id x_1 \Id x_2 \Id s.
  \end{align*}
  Hence, we obtained an expression in which the dimension does not change in the limit. By applying mass conservation, the Csisz{\'a}r--Kullback--Pinsker inequality~\eqref{eq: CKP_inequality} and inequality~\eqref{eq: realtive_entropy_marginal_inequality} we further estimate the term
  \begin{align*}
    &\;   \int\limits_0^t \int\limits_{\R^2} |k (x_{1} -x_{2} ) -k^\epsilon (x_{1} -x_{2} )|^2  \rho^{N,2, \epsilon}_s(x_1,x_2)
    \Id x_1 \Id x_2 \Id s  \\
    =&\;   \int\limits_0^t \int\limits_{\R^2} |k (x_{1} -x_{2} ) -k^\epsilon (x_{1} -x_{2} )|^2 ( \rho^{N,2, \epsilon}_s- (\rho_s^\epsilon \otimes \rho_s^\epsilon)(x_1,x_2))
    \Id x_1 \Id x_2 \Id s  \\
    &+  \int\limits_0^t \int\limits_{\R^2} |k (x_{1} -x_{2} ) -k^\epsilon (x_{1} -x_{2} )|^2 (\rho_s^\epsilon \otimes \rho_s^\epsilon)(x_1,x_2)
    \Id x_1 \Id x_2 \Id s \\
     \le &\;  C \norm{k}_{L^\infty(\R)}^2 \int\limits_0^t \norm{\rho^{N,2, \epsilon}_s- \rho^{\otimes 2, \epsilon}}_{L^1(\R^2)}  \Id s
    +  \int\limits_0^t \int\limits_{\R^2} |k (x_{1} -x_{2} ) -k^\epsilon (x_{1} -x_{2} )|^2 \\
    &\quad\quad \bigg( (\rho_s^\epsilon (x_1) - \rho_s(x_1) ) \rho_s^\epsilon(x_2)
    + \rho_s(x_1)  (\rho_s^\epsilon (x_2) - \rho_s(x_2) )   +  \rho_s(x_1) \rho_s(x_2) \bigg)  \Id x_1 \Id x_2 \Id s \\
    \le &\;   C T \norm{k}_{L^\infty(\R)}^2 \sup\limits_{t \in [0,T]} \sqrt{ {\mathcal H}_N( \rho^{N,\epsilon}_t \;  | \; \rho^{\otimes N,  \epsilon}_t)  }
    + 2C \norm{k}_{L^\infty(\R)}^2   \norm{\rho_s^\epsilon - \rho_s}_{L^1([0,T];L^1(\R))}  \\
    &+  \int\limits_0^t \int\limits_{\R^2} |k (x_{1} -x_{2} ) -k^\epsilon (x_{1} -x_{2} )|^2  \rho_s(x_1) \rho_s(x_2)  \Id x_1 \Id x_2 \Id s .
  \end{align*}
  Plugging this estimate into our above entropy calculation and taking the supremum in time proves the lemma.
\end{proof}

Combing both implies the strong convergence on the PDE-level of any observable \(\rho^{N,m}\) to the law \(\rho^{\otimes m}\) in the \(L^1(\R)\)-norm. 
    
\begin{proof}[Proof of theorem \ref{maintheorem2}]
  For $k\in L^\infty(\R)$, let $k^\epsilon=(\zeta^\epsilon(k*J^\epsilon))*J^\epsilon$, therefore we take $W^\epsilon=\zeta^\epsilon( k^\epsilon * J^\epsilon) $ and \(V^\epsilon = J^\epsilon\) with \(\epsilon = \epsilon (N) = N^{-\beta}\). By assumption of the Theorem (see also \cite[Theorem 6.1]{Nikolaev2023}), there exists a $\beta_{\alpha} \in (0,\frac12)$ such that for all \(\beta \le \beta_{\alpha}\) the convergence in probability, Assumption~\ref{ass: convergence_in_probability}, and the law of large numbers, Assumption~\ref{ass: law_of_large_numbers} both hold. Therefore we can apply the result from Corollary~\ref{cor: bounded_case} for \(0<\beta < \min\bigg(\frac{1}{3}, \alpha-\frac{1}{4},\beta_{\alpha} \bigg)\) and obtain the convergence of the relative entropy \({\mathcal H}( \rho^{N,\epsilon}_t \;  | \; \rho_t^{\otimes N, \epsilon}) \) to zero. We can even get better convergence rate \(\beta\), since we are not interested in the order of convergence of the relative entropy. Applying~\eqref{eq: CKP_inequality} and ~\eqref{eq: realtive_entropy_marginal_inequality}, we obtain
  \begin{align*}
    &\; \norm{\rho^{N,m}-\rho^{\otimes m}}_{L^1([0,T];L^1(\R^m))}  \\
    \le &\;  \norm{\rho^{N,m}-\rho^{N,m,\epsilon}}_{L^1([0,T];L^1(\R^m))} + \norm{\rho^{N,m,\epsilon}-\rho^{\otimes m,\epsilon} }_{L^1([0,T];L^1(\R^m))} \\
    &+  \norm{\rho^{\otimes m,\epsilon}  -\rho^{\otimes m }}_{L^1([0,T];L^1(\R^m))} \\
    \le &\;  \int\limits_0^T \sqrt{2m {\mathcal H}_m( \rho^{N,m,\epsilon}_t \;  | \; \rho_t^{N,m})} +  \sqrt{2m {\mathcal   H}_m( \rho^{N,m,\epsilon}_t \;  | \; \rho^{\otimes m,\epsilon} _t)} +  \norm{\rho^{\otimes m,\epsilon} _t -\rho_t^{\otimes m}}_{L^1(\R^m)} \Id t  \\
    \le &\;  \int\limits_0^T \sqrt{4m {\mathcal H}_N( \rho^{N,\epsilon}_t \;  | \; \rho_t^{N})} +  \sqrt{4m {\mathcal H}_N( \rho^{N,\epsilon}_t \;  | \; \rho^{\otimes N, \epsilon}_t)}+  \norm{\rho^{\otimes m,\epsilon} _t -\rho^{\otimes m}_t}_{L^1(\R^m)} \Id t  .
  \end{align*}
  As mentioned the second term converges to zero. For the first term we use the inequality in Lemma~\ref{lemma: Lioville_equations_approximation_convergence} together with the fact that the \({\mathcal H}_N( \rho^{N,\epsilon}_t \;  | \; \rho_t^{\otimes N, \epsilon}) \) converges to zero and the dominated convergence to obtain
  \begin{align*}
    \limsup\limits_{N \to \infty} \int\limits_0^T  \sqrt{4m {\mathcal H}_N( \rho^{N,\epsilon}_t \;  | \; \rho_t^{N})}  \Id t
    & \le C(\norm{k}_{L^\infty(\R)},m) \limsup\limits_{N\to \infty} \int\limits_0^T \norm{\rho^\epsilon_t -\rho_t}_{L^1(\R)}^{\frac{1}{2}} \Id t \\
    &\le C(\norm{k}_{L^\infty(\R)},m,T) \limsup\limits_{N\to \infty} \bigg( \int\limits_0^T \norm{\rho^{\epsilon}_t -\rho_t}_{L^1(\R)} \Id t \bigg)^{\frac{1}{2}}\\
    &=0 .
  \end{align*}
  where the last equality follows by~\eqref{eq: l1_coonv_mean_pde}. Consequently, it remains to show that the third term vanishes, i.e.
  \begin{equation}\label{eq: aux_chaotic_law_conv}
    \limsup\limits_{N\to \infty} \norm{\rho^{\otimes m,\epsilon} _t -\rho^{\otimes m}_t}_{L^1([0,T];L^1(\R^m))} = 0.
  \end{equation}
  Again this follows by~\eqref{eq: l1_coonv_mean_pde} and an induction argument. Indeed let us assume \(m=2\), then by mass conservation we have
  \begin{align*}
    &\norm{\rho^{\otimes 2,\epsilon} _t -\rho^{\otimes 2}_t}_{L^1([0,T];L^1(\R^2))} \\
    &\quad=  \int\limits_0^T \int_{\R^2} |(\rho^\epsilon_t(x_1)-\rho_t(x_1)) \rho^\epsilon_t(x_2) + \rho_t(x_1) (\rho^\epsilon_t(x_2)-\rho_t(x_2))| \Id x_1 \Id x_2 \Id t  \\
    &\quad \le  2\norm{\rho^\epsilon_t -\rho_t}_{L^1([0,T];L^1(\R))} \xrightarrow{N  \to \infty}      \quad  0,
  \end{align*}
  which proves the initial case for the induction. Now, by the same argument one can prove the induction step and therefore equation~\eqref{eq: aux_chaotic_law_conv}.
\end{proof}

\section{Application}\label{sec: application}

We provide some examples for which Theorem~\ref{maintheorem} can be shown with the same techniques developed in Section~\ref{sec: relative_entropy_method}. In particular we demonstrate the convergence in relative entropy in the attractive Coulomb case on the whole space. Note that the rate of converges may vary across these examples. As stated in Remark~\ref{remark: general_kernels_l2main_thm} we only need the existence of approximated PDE~\eqref{eq: regularized_aggregation_diffusion_pde}, the particle system~\eqref{eq: regularized_mean_field_trajectories}, the convergence in probability  of the particle system \(\mathbf{X}^N\) to the mean-field limit \(\mathbf{Y}^N\)(Assumption~\eqref{ass: convergence_in_probability}) and the law of large numbers (Assumption~\ref{ass: law_of_large_numbers}). Since we are working on the regularized level, we can often assume the existence of the above results. 

Although the result in Theorem~\ref{maintheorem} also works with rotational field, it worth to study directly a convolution type of potential field to achieve better cut-off rate, in other words, to allow bigger $\beta$. For a given potential field, the challenging part is to find a convolution structure for the potential described in Section~\ref{sec: relative_entropy_method}. The first idea to obtain interesting kernels, beside the Delta-Distribution, which was given in~\cite{Oeschlager1987}, is to look at infinite divisible distributions. Assume that \(k^\epsilon=U_x\) is infinitely divisible. Then, there exists a \(V^\epsilon\) such that \(U^\epsilon = V^\epsilon*V^\epsilon\). Hence, if we can approximate the antiderivative of our kernel by a infinitely divisible distribution (multiplied by a constant if necessary) we are able to find candidates for interesting kernels.

Another powerful tool is the Fourier analysis. On the Fourier side the equation \( k^\epsilon = V^\epsilon*W^\epsilon \) becomes 
\begin{equation*}
  \mathcal{F}(W^\epsilon) = \mathcal{F}(V^\epsilon)\mathcal{F}({W}^\epsilon),
\end{equation*}
which can be explored. In particular for singular kernels we have representations of the Fourier transforms, see for instance~\cite{Stein1970}. Consequently, we can use this approach to obtain a wide range of interesting examples used in biology or physics.  

In the rest of the section we provide some fascinating examples for which the case of convolution structure in Theorem~\ref{theorem: emp_measure_l2_estimate} can be obtained. 

\subsection{Uniform bounded confidence model}

Let \(V(x)= i \indicator{\big[-\frac{R}{2},\frac{R}{2} \big]}(x)\) be a complex-valued function. Then
\begin{align*}
  \begin{cases}
   V*V \colon &\R \to \R  \\
   & x \to \begin{cases}
   0 \quad &\mathrm{if} \; x > |R|, \\
   -x-R \quad &\mathrm{if} \;  -R \le x \le 0, \\
   x-R \quad &\mathrm{if} \;  0 < x \le R .
   \end{cases}
   \end{cases}
\end{align*}
is a Lipschitz-continuous function with bounded support. Furthermore, we have 
\begin{equation*}
  \nabla(V*V) =- \indicator{[-R,0]} + \indicator{[0,R]} =: k_{\scriptscriptstyle{U}}
  \quad \text{a.e.}
\end{equation*}
Consequently, the uniform bounded confidence model, satisfies the assumption of Section~\ref{sec: relative_entropy_method} with the usual mollification approximation. Also, it is well known that the indicator function \(\indicator{\big[-\frac{R}{2},\frac{R}{2} \big]} \in H^s(\R)\) for all \(s<1/2\). We also have the convergence in probability by~\cite[Lemma 4.7, Theorem 6.1.]{Nikolaev2023}. Hence, we obtain the following proposition

\begin{proposition}
  Let \(k_{\scriptscriptstyle{U}}\) be given above, then the first marginal \((\rho_t^{N,1}, t \ge 0)\) of the law of the system \(\mathbf{X}^N\) converges to the law \((\rho_t, t \ge 0)\) of \(\mathbf{Y}^N\) in the \(L^1([0,T];L^1(\R))\)-norm.
\end{proposition}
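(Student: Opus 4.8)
The plan is to deduce this proposition directly from Theorem~\ref{maintheorem2}, which gives precisely the $L^1([0,T];L^1(\R^m))$-convergence of the $m$-th marginal for bounded interaction kernels under Assumptions~\ref{ass: convergence_in_probability} and~\ref{ass: law_of_large_numbers}. The only things to check are therefore that $k_{\scriptscriptstyle{U}}$ satisfies the hypotheses of that theorem: first, that $k_{\scriptscriptstyle{U}} \in L^\infty(\R)$, and second, that the convergence in probability and the law of large numbers hold for the approximation $k^\epsilon = (\zeta^\epsilon(k_{\scriptscriptstyle{U}}*J^\epsilon))*J^\epsilon$.

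The boundedness is immediate: $k_{\scriptscriptstyle{U}} = -\indicator{[-R,0]} + \indicator{[0,R]}$ is bounded by $1$ and has compact support, so $k_{\scriptscriptstyle{U}} \in L^\infty(\R) \cap L^1(\R)$, and in particular Assumption~\ref{ass: initial condition} together with the standard well-posedness theory cited in Section~\ref{sec: preliminaries} applies. For the probabilistic assumptions, the plan is to invoke the results of~\cite{Nikolaev2023}: the kernel $k_{\scriptscriptstyle{U}}$ is bounded and its mollified approximations $k^\epsilon$ satisfy a local Lipschitz bound (with Lipschitz constant growing polynomially in $1/\epsilon$, controlled since $\indicator{[-R/2,R/2]}\in H^s$ for all $s<1/2$ so that $V*V$ is Lipschitz), which is exactly the structural condition under which~\cite[Lemma~4.7, Theorem~6.1]{Nikolaev2023} yields both the convergence in probability~\eqref{eq: convergence_in_probability} and the law of large numbers~\eqref{eq: lln_small_b_set} for a suitable range $\beta \le \beta_\alpha$ and $\alpha \in (\tfrac14,\tfrac12)$. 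One should also record that $k_{\scriptscriptstyle{U}} = \nabla(V*V)$ with $V = i\,\indicator{[-R/2,R/2]}$, so that $k_{\scriptscriptstyle{U}}$ has the potential-plus-convolution structure of Section~\ref{sec: relative_entropy_method} with $W = V$; this is what makes the sharper relative-entropy and modulated-energy estimates of Theorem~\ref{maintheorem}/Corollary~\ref{cor: potential_case} available, although for the stated $L^1$-convergence only Theorem~\ref{maintheorem2} is needed.

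With these checks in place, the proof is a one-line application: apply Theorem~\ref{maintheorem2} with $m=1$ and $k = k_{\scriptscriptstyle{U}}$ to conclude
\[
  \lim_{N\to\infty} \norm{\rho^{N,1} - \rho}_{L^1([0,T];L^1(\R))} = 0 ,
\]
which is the claim since $\rho^{\otimes 1} = \rho$ is the law of the mean-field system $\mathbf{Y}^N$.

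The main obstacle, such as it is, is not in the analysis carried out here but in verifying the hypotheses of the cited propagation-of-chaos-in-probability results for this specific (discontinuous) kernel: one must be sure that the local Lipschitz estimates on $k^\epsilon$ that~\cite{Nikolaev2023} requires actually hold with the right dependence on $\epsilon$, so that the admissible cut-off range $\beta$ is non-empty and compatible with the constraint $\beta < \min(\tfrac13,\alpha-\tfrac14,\beta_\alpha)$ coming from Corollary~\ref{cor: bounded_case}. Since $k_{\scriptscriptstyle{U}}$ is bounded with compact support and $V*V$ is globally Lipschitz, these estimates are of exactly the type handled in~\cite{Nikolaev2023}, so no genuine difficulty arises; the proposition is essentially a corollary of Theorem~\ref{maintheorem2}.
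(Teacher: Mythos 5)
Your proposal is correct and follows essentially the same route as the paper: the paper likewise observes that \(k_{\scriptscriptstyle{U}}\in L^\infty(\R)\) has the convolution-potential structure with \(\indicator{[-R/2,R/2]}\in H^s(\R)\) for \(s<1/2\), invokes \cite[Lemma~4.7, Theorem~6.1]{Nikolaev2023} for the convergence in probability and law of large numbers, and then concludes by Theorem~\ref{maintheorem2}. No substantive difference.
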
 

\subsection{Parabolic-Elliptic Keller--Segel System}

In this subsection we provide an approximation for the elliptic-parabolic Keller--Segel model~\cite{KellerSegel1970} in \(\R^d\). The underlying PDE is given by 
\begin{align*}
  \begin{cases}
  \partial_t \rho_t &= \frac{\sigma^2}{2} \Delta \rho_t - \nabla \cdot ( \chi \rho_t \nabla c_t)   \\
  - \Delta c_t &= \rho_t
  \end{cases}
\end{align*}
for \(\chi,\sigma > 0\). Decoupling the above system by setting \(c_t = \Phi* \rho_t\) with \(\Phi\) being the fundamental solution of the Laplace equation we can formally derive the above equation from the particle system~\eqref{eq: regularized_particle_system} with the interaction force kernel \(k =- \nabla \Phi\).
In particular, if \(d \ge 2 \) we have
\begin{align*}
  \Phi(x) = 
  \begin{cases}
  - \frac{1}{2\pi} \log(|x|), &\quad x \neq 0 ,\quad \mathrm{if} \; d = 2, \\
  \frac{1}{d(d-2)\lambda(B_1(0))} \frac{1}{|x|^{d-2}}, &\quad x \neq 0 , \quad \mathrm{if } \; d \ge 3 , 
  \end{cases}
\end{align*}
is the fundamental solution of the Laplace equation. 

In the following we present two approaches to mollify our kernel. For the first approach, let us define a mollification kernel \(J_{KS}\), which satisfies \(J_{KS} \ge 0\), \(\norm{J_{KS}}_{L^1(\R^d)}=1\) and \(\mathrm{supp}(J_{KS}) \subset B(0,1/2)\) and is infinitely differentiable. As always we set \(J_{KS}^\epsilon(x)= \frac{1}{\epsilon^d} J_{KS}\big(\frac{x}{\epsilon}\big)\). Then \(k^\epsilon =- \nabla (J^\epsilon_{KS}*\Phi*J^\epsilon_{KS})\) satisfies all properties of~\cite[Theorem 2.1]{HuangHiuLius2019}. Hence, the convergence in probability Assumption~\ref{ass: convergence_in_probability}, the law of large numbers Assumption~\ref{ass: law_of_large_numbers} is satisfied. 

Hence, under consideration of Remark~\ref{remark: alpha_restriction} we can obtain a relative entropy convergence results on the approximated \(d\)-dimensional attractive Keller--Segel system on the whole space \(\R^d\). We formulate the following proposition as combination of Lemma~\ref{lemma: relative_entropy_L_2_estimate} and Theorem~\ref{theorem: emp_measure_l2_estimate}. 

\begin{proposition} \label{prop: Keller_Segel2d_e_e}
  Let \(k^\epsilon = -\nabla (W^\epsilon*V^\epsilon)_x\) with \(W^\epsilon = J^\epsilon_{KS}*\Phi\), \(V^\epsilon = J^\epsilon_{KS}\). Let \(\rho^{N,\epsilon}\) be the solution of the Lioville equation~\eqref{eq: regularized_Liouville_equation} and \(\rho^{\epsilon}\) be the solution to regularized Keller--Segel equation, i.e. to the PDE~\eqref{eq: regularized_aggregation_diffusion_pde}. Then, there exists a \(\beta > 0 \) depending on the dimension \(d\) such that for \(\epsilon= \epsilon(N) = N^{-\beta}\) there exists a \(\lambda >0\) such that
  \begin{equation*}
     \sup\limits_{t \in [0,T]} {\mathcal H}_N( \rho^{N,\epsilon(N)}_t \;  | \; \rho^{\otimes N, \epsilon(N)}_t) \le C N^{-\lambda}.
  \end{equation*}
\end{proposition}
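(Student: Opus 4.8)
The plan is to verify that the Keller--Segel kernel, after the symmetric double mollification $k^\epsilon = -\nabla(J^\epsilon_{KS}*\Phi*J^\epsilon_{KS})$, fits the convolution framework $k^\epsilon = (W^\epsilon*V^\epsilon)_x$ of Theorem~\ref{theorem: emp_measure_l2_estimate} with $W^\epsilon = J^\epsilon_{KS}*\Phi$ and $V^\epsilon = J^\epsilon_{KS}$, and then to track the $\epsilon$-dependence of all the norms appearing in the main $L^2$-estimate, finally optimizing $\beta$ (with $\epsilon = N^{-\beta}$) so that the resulting rate is a positive power of $N$. First I would record the hypotheses that are already supplied: by the cited reference \cite{HuangHiuLius2019} (Theorem~2.1 there), for this particular mollification the convergence in probability (Assumption~\ref{ass: convergence_in_probability}) and the law of large numbers (Assumption~\ref{ass: law_of_large_numbers}) hold for some admissible range of $\alpha \in (\tfrac14,\tfrac12)$ and $\beta \le \beta_\alpha$; this is exactly the input demanded by Theorem~\ref{theorem: emp_measure_l2_estimate} and Lemma~\ref{lemma: relative_entropy_L_2_estimate}, and here is the only place the probabilistic machinery enters. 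The multi-dimensional nature is handled as in Remark~\ref{remark: general_kernels_l2main_thm}: the estimates are dimension-free in form but the scaling exponents of the mollifier norms depend on $d$.

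Next I would carry out the bookkeeping of norms. Since $J_{KS}$ is a fixed smooth compactly supported bump, the scaling $J^\epsilon_{KS}(x) = \epsilon^{-d}J_{KS}(x/\epsilon)$ gives $\|J^\epsilon_{KS}\|_{W^{m,1}(\R^d)} \le C\epsilon^{-m}$ and $\|J^\epsilon_{KS}\|_{H^m(\R^d)} \le C\epsilon^{-d/2-m}$ for every $m \ge 0$; these are exactly the estimates used in Corollaries~\ref{cor: mollifier_l2_estimate} and~\ref{cor: potential_case}. For $W^\epsilon = J^\epsilon_{KS}*\Phi$ one uses that $\Phi$ is locally integrable with a singularity of order $|x|^{-(d-2)}$ (or $\log$ in $d=2$) and that $\nabla\Phi \sim |x|^{-(d-1)}$; convolving against the smooth mollifier and, where needed, a cut-off to restore global integrability, one obtains $\|W^\epsilon\|_{H^2(\R^d)} \le C\epsilon^{-a_W}$ with an explicit $a_W = a_W(d)$ (this is where one pays for the singularity: each derivative that hits $\Phi$ near the origin costs a power of $\epsilon$, the precise power coming from the homogeneity of $\Phi$). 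Likewise $\|k^\epsilon\|_{L^\infty} + \|k^\epsilon_x\|_{L^\infty} \le 2\|W^\epsilon\|_{H^2}\|V^\epsilon\|_{H^2} \le C\epsilon^{-a_W-d/2-2}$ by Young's inequality as in the proof of Corollary~\ref{cor: general_case_estimate}, and $\|k^\epsilon*\rho^\epsilon_t\|_{L^\infty}$ is controlled by mass conservation exactly as in the proof of Theorem~\ref{theorem: emp_measure_l2_estimate}.

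With all these powers in hand, I would substitute into the master inequality of Theorem~\ref{theorem: emp_measure_l2_estimate} to get a bound of the schematic form
\begin{equation*}
  \E\Big(\sup_{t\le T}\|V^\epsilon*(\mu^{N,\epsilon}_t-\rho^\epsilon_t)\|_{L^2}^2\Big) \le \frac{C}{N^{2\alpha}\epsilon^{p_1(d)}} + \frac{C}{N^{\alpha+1/2}\epsilon^{p_2(d)}} + \frac{C}{N\epsilon^{p_3(d)}} + \frac{C}{N^\gamma\epsilon^{p_4(d)}},
\end{equation*}
then feed this into \eqref{entropyW} of Lemma~\ref{lemma: relative_entropy_L_2_estimate} (using the extra gain from the $V^\epsilon_x$-term in the potential case, together with the bound on $\|W^\epsilon\|_{L^2}$), set $\epsilon = N^{-\beta}$, and choose $\beta = \beta(d)$ small enough and then $\gamma$ large enough that every exponent of $N$ in the denominator stays bounded below by a fixed $\lambda > 0$. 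Since $\alpha > \tfrac14$ the exponents $2\alpha$ and $\alpha+\tfrac12$ exceed $\tfrac12$, so a strictly positive margin $\lambda$ survives for all sufficiently small $\beta$; taking the supremum in time (which is already built into the estimates) yields $\sup_{t\in[0,T]}\mathcal H_N(\rho^{N,\epsilon}_t\,|\,\rho^{\otimes N,\epsilon}_t) \le CN^{-\lambda}$.

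The main obstacle, and the step requiring genuine care rather than routine calculation, is the regularity bookkeeping for $W^\epsilon = J^\epsilon_{KS}*\Phi$ in general dimension $d$: one must confirm that $W^\epsilon \in H^2(\R^d)$ at all (global square-integrability of $\Phi*J^\epsilon_{KS}$ and its derivatives is not automatic for the slowly decaying Newtonian potential, which is why a cut-off may be needed as in the $k\in L^\infty$ examples), and one must extract the sharp exponent $a_W(d)$ governing the blow-up of $\|W^\epsilon\|_{H^2}$ as $\epsilon\to 0$, since this exponent is precisely what limits how large $\beta$ may be and hence whether the competition with the $N^{-2\alpha}$ and $N^{-(\alpha+1/2)}$ gains can be won. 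Everything else is a direct transcription of the one-dimensional corollaries into the $d$-dimensional setting via Remark~\ref{remark: general_kernels_l2main_thm}.
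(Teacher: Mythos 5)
Your proposal follows essentially the same route as the paper: the paper gives no detailed argument for this proposition but states it as a direct combination of Lemma~\ref{lemma: relative_entropy_L_2_estimate} and Theorem~\ref{theorem: emp_measure_l2_estimate}, with Assumptions~\ref{ass: convergence_in_probability} and~\ref{ass: law_of_large_numbers} supplied by the cited result of \cite{HuangHiuLius2019} and the admissibility rates obtained from the mollifier scalings, exactly as you describe. Your observation that $W^\epsilon=J^\epsilon_{KS}*\Phi$ is not automatically globally square-integrable (the Newtonian tail $|x|^{-(d-2)}$ fails to be in $L^2(\R^d)$ for $d\le 4$, so a cut-off is genuinely needed) is a correct and worthwhile refinement that the paper's terse justification passes over.
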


\begin{remark}
  By going through the proof of Theorem~\ref{theorem: emp_measure_l2_estimate} one can obtain a convergence rate and precise condition for \(\beta\). Furthermore, by inequality~\eqref{eq: CKP_inequality} we have proven convergence of the \(L^1(\R^d)\)-norm of the marginals  
  \begin{equation*}
    \lim\limits_{N\to \infty}  \norm{\rho_t^{N,2,\epsilon(N)}-\rho_t^{\epsilon(N)} \otimes \rho_t^{\epsilon(N)} }_{L^1(\R^d)} = 0 .
  \end{equation*}
  It is also well-known that under additional assumptions on the initial data \(\rho_0\)  and in the sub-critical regime \(\chi < 8\pi\) in the case \(d=2\) the density \(\rho_t^{\epsilon(N)}\) converges  in \(L^1(\R^d)\). Hence, we have shown that in the sub-critical case the density of the two marginal \(\rho_t^{N,2,\epsilon(N)}\) converges in \(L^1(\R^d)\) to the solution of the Keller--Segel equation.
\end{remark}

In the case \(d \ge 3\) we can obtain an even better approximation, which has a symmetric convolution structure given by \(k^\epsilon=- \nabla(V^\epsilon*V^\epsilon)\). 
Indeed, define the approximation of \(\Phi\) as \(\Phi^\epsilon:=  J^\epsilon_{KS}*\Phi*J^\epsilon_{KS}\). Then, for \(c_\alpha = \pi^{ -\alpha/2} \Gamma(\alpha/2)\) and 
\begin{equation} \label{eq: regularized_fundamental_solution_laplace_by_mollifier}
  V^\epsilon =\sqrt{\frac{c_2}{c_{d-2} d(d-2)\lambda(B_1(0))}} \mathcal{F}^{-1}(|\xi|^{-1} \mathcal{F}(J_{KS}^\epsilon)(\xi))
\end{equation}
we have 
\begin{equation}\label{eq: Keller_Segel_conv_split}
  \Phi^\epsilon = V^\epsilon*V^\epsilon.
\end{equation}
More precisely, for fix \(\epsilon > 0\) we have \(J_{KS}^\epsilon \in L^p(\R^d)\) for all \(p \ge 1 \). Hence the Fourier transform \(\mathcal{F}(J_{KS}^\epsilon)\) is well-defined and by the Hardy--Littlewood--Sobolev inequality~\cite[Chapter~5, Theorem~1]{Stein1970},~\cite[Corollary~5.10]{LiebElliottH2010A} \(\Phi^\epsilon \in L^2(\R)\) and the Fourier transform exists. Similar \(|\cdot|^{-1} \mathcal{F}(J_{KS}^\epsilon)(\cdot) \in L^2(\R^d)\). A simple calculation shows
\begin{align*}
  \norm{|\cdot|^{-1} \mathcal{F}^{\frac{1}{2}}(J_{KS}^\epsilon)(\cdot)}_{L^2(\R)}^2
  &= \int_{\R^d} |\xi|^{-2}  |\mathcal{F}(J_{KS}^\epsilon)(\xi)| \Id \xi  \\
  &\le \norm{\mathcal{F}(J_{KS}^\epsilon)}_{L^\infty(\R)} \int_{B_1(0)}  |\xi|^{-2} \Id  \xi + \int_{B_1(0)^{\mathrm{c}}}    |\mathcal{F}(J_{KS}^\epsilon)(\xi)| \Id \xi
  < \infty
\end{align*}
since \(d>2\) and \(\mathcal{F}(J_{KS}^\epsilon)\) is a Schwartz function. As a result, to verify~\eqref{eq: Keller_Segel_conv_split} we need to show 
\begin{equation*}
  \mathcal{F}(\Phi^\epsilon) = \mathcal{F}(V^\epsilon)^2,
\end{equation*}
where the right-hand is square integrable. Now, by~\cite[Corollary~5.10]{LiebElliottH2010A} we have
\begin{equation*}
  \mathcal{F}(V^\epsilon)^2(\xi) = \frac{c_2}{c_{d-2} d(d-2)\lambda(B_1(0))} |\xi|^{-2} \mathcal{F}(J_{KS}^\epsilon)(\xi) = \mathcal{F}(\Phi^\epsilon)(\xi),
\end{equation*}
where the left-hand side is in \(L^2(\R^d)\) by similar arguments as before. Therefore,~\eqref{eq: Keller_Segel_conv_split} is proven and we can find an appropriate approximation for the Keller--Segel interaction kernel. In particular, we can derive similar estimates to~\eqref{eq: l2_bound_V} with the help of Fourier analysis and the Hardy--Littlewood--Sobolev inequality~\cite[Chapter~ 5. Theorem~1]{Stein1970}. Clearly, this estimates will now depend on the dimension \(d\) and therefore the convergence rate parameters also depend on the dimension \(d\). 

\begin{proposition}\label{prop: Keller_Segel_e_e}
  Let \(d\ge 3\) and \(k^\epsilon = -\nabla (W^\epsilon*V^\epsilon)_x\) with \(W^\epsilon,V^\epsilon\) defined by the same expression~\eqref{eq: regularized_fundamental_solution_laplace_by_mollifier}. Then the conclusion of Proposition~\ref{prop: Keller_Segel2d_e_e} holds. Additionally we have the modulated energy estimate
  \begin{equation*}
    \sup\limits_{t \in [0,T]}  | {\mathcal K}_{N}  (\rho_t^{N,\epsilon}\vert \rho_t^{\otimes N, \epsilon})| \le C N^{-\lambda}
  \end{equation*}
  for some \(C>0, \lambda>0\). 
\end{proposition}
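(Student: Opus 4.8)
The plan is to reduce Proposition~\ref{prop: Keller_Segel_e_e} to the machinery of Section~\ref{sec: relative_entropy_method} by checking that $W^\epsilon=V^\epsilon$ from~\eqref{eq: regularized_fundamental_solution_laplace_by_mollifier} are strongly admissible approximations in the sense of Definition~\ref{def: admissible}, and then invoking Theorem~\ref{maintheorem} in its last branch (potential field with convolution structure, strongly admissible approximations), which delivers simultaneously the relative entropy estimate~\eqref{eq: main_thm_entropy_estimate} and the modulated energy estimate. First I would observe that, by~\eqref{eq: Keller_Segel_conv_split}, $W^\epsilon*V^\epsilon = V^\epsilon*V^\epsilon = \Phi^\epsilon = J_{KS}^\epsilon*\Phi*J_{KS}^\epsilon$, so $k^\epsilon = -\nabla(W^\epsilon*V^\epsilon)$ is exactly the regularized Coulomb kernel already used in Proposition~\ref{prop: Keller_Segel2d_e_e}; hence the convergence in probability (Assumption~\ref{ass: convergence_in_probability}) and the law of large numbers (Assumption~\ref{ass: law_of_large_numbers}) hold for it by~\cite[Theorem 2.1]{HuangHiuLius2019}, with $\alpha\in(\tfrac14,\tfrac12)$ as required by Remark~\ref{remark: alpha_restriction} for the modulated energy bound.

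Next I would carry out the Fourier-side norm estimates. Writing $\mathcal{F}(V^\epsilon)(\xi)=c_d\,|\xi|^{-1}\mathcal{F}(J_{KS})(\epsilon\xi)$ with $c_d=\sqrt{c_2/(c_{d-2}d(d-2)\lambda(B_1(0)))}$, one has for any multi-index $\kappa$ and $m\ge 0$
\begin{equation*}
  \norm{\partial^\kappa V^\epsilon}_{H^m(\R^d)}^2 = c_d^2\int_{\R^d}(1+4\pi^2|\xi|^2)^m\,|\xi|^{2|\kappa|-2}\,|\mathcal{F}(J_{KS})(\epsilon\xi)|^2\,\d\xi .
\end{equation*}
Splitting the integral at $|\xi|=1$: on $B_1(0)$ the weight $|\xi|^{2|\kappa|-2}$ is integrable since $d\ge 3$ and $\mathcal{F}(J_{KS})$ is bounded, contributing an $\epsilon$-independent constant (this is where the Hardy--Littlewood--Sobolev bound~\cite[Chapter~5, Theorem~1]{Stein1970} is implicitly used, exactly as in the discussion preceding the proposition); on $\{|\xi|>1\}$ the substitution $\eta=\epsilon\xi$ together with the rapid decay of the Schwartz function $\mathcal{F}(J_{KS})$ gives a bound of order $\epsilon^{-(d+2m+2|\kappa|-2)}$. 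In particular $\norm{W^\epsilon}_{H^2(\R^d)}=\norm{V^\epsilon}_{H^2(\R^d)}\le C\epsilon^{-a_V}$ with $a_V=\tfrac{d+2}{2}$, so $W^\epsilon,V^\epsilon$ are strongly admissible, and one also records $\norm{V^\epsilon_x}_{L^2(\R^d)}\le C\epsilon^{-d/2}$, $\norm{V^\epsilon_{xx}}_{L^2(\R^d)}\le C\epsilon^{-(d/2+1)}$; Young's inequality for convolutions then yields $\norm{k^\epsilon}_{L^\infty(\R^d)}\le\norm{W^\epsilon}_{L^2}\norm{V^\epsilon}_{H^1}$ and $\norm{k^\epsilon_x}_{L^\infty(\R^d)}\le\norm{W^\epsilon}_{L^2}\norm{V^\epsilon}_{H^2}$, hence algebraic bounds in $\epsilon^{-1}$ with dimension-dependent exponents.

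With these ingredients, I would run the proof of Theorem~\ref{maintheorem} as in Corollaries~\ref{cor: general_case_estimate}--\ref{cor: potential_case}: Lemma~\ref{lemma: relative_entropy_L_2_estimate} bounds $\mathcal{H}_N(\rho^{N,\epsilon}_t\vert\rho^{\otimes N,\epsilon}_t)$ by $\tfrac{\norm{W^\epsilon}_{L^2}^2}{\sigma^2}\E\big(\int_0^t\norm{V^\epsilon_x*(\mu^{N,\epsilon}_s-\rho^\epsilon_s)}_{L^2}^2\,\d s\big)$ (using the potential-field form of the remainder noted after that lemma), and Theorem~\ref{theorem: emp_measure_l2_estimate} applied to $V^\epsilon$ controls the latter by a finite sum of terms $N^{-c}\epsilon^{-c'}$ with $c,c'>0$ depending only on $d,\alpha$; choosing $\epsilon=N^{-\beta}$ with $\beta>0$ small enough (the threshold shrinking with $d$ through $a_V$) makes every exponent strictly exceed $\tfrac12$, giving the first assertion with some $\lambda>0$. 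For the modulated energy I would use identity~\eqref{eq: expextationmodulatedenergy_connection}: taking $J_{KS}$ even (a harmless standard choice) makes $V^\epsilon$ real and even, so $\hat W^\epsilon=W^\epsilon=V^\epsilon$, and Young's inequality reduces $\mathcal{K}_N(\rho^{N,\epsilon}_t\vert\rho^{\otimes N,\epsilon}_t)$ to $\tfrac1{\sigma^2}\E\big(\norm{V^\epsilon*(\mu^{N,\epsilon}_t-\rho^\epsilon_t)}_{L^2(\R^d)}^2\big)$, which Theorem~\ref{theorem: emp_measure_l2_estimate} again controls; the same choice of $\beta$, sharpened as in Remark~\ref{remark: alpha_restriction} (whence the restriction $\alpha\in(\tfrac14,\tfrac12)$), yields the $N^{-\lambda}$ bound. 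The main obstacle I anticipate is purely the bookkeeping in the Fourier estimates — handling the origin singularity $|\xi|^{-1}$ (innocuous after squaring precisely because $d\ge 3$) together with the polynomial $H^2$-weights at infinity — and then tracking all resulting powers of $N$ through Theorem~\ref{theorem: emp_measure_l2_estimate} to exhibit an admissible, dimension-dependent range of $\beta$; the rest is a direct application of Section~\ref{sec: relative_entropy_method}, with the multi-dimensional adaptations of Remark~\ref{remark: general_kernels_l2main_thm}.
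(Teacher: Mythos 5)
Your proposal is correct and follows essentially the same route as the paper: verify that $W^\epsilon=V^\epsilon$ from~\eqref{eq: regularized_fundamental_solution_laplace_by_mollifier} are strongly admissible via Fourier-side estimates (splitting at $|\xi|=1$, using $d\ge 3$ at the origin and the Schwartz decay of $\mathcal{F}(J_{KS})$ at infinity), note that $k^\epsilon$ coincides with the regularized Coulomb kernel of Proposition~\ref{prop: Keller_Segel2d_e_e} so Assumptions~\ref{ass: convergence_in_probability}--\ref{ass: law_of_large_numbers} carry over, and then feed everything into Lemma~\ref{lemma: relative_entropy_L_2_estimate}, Theorem~\ref{theorem: emp_measure_l2_estimate} and identity~\eqref{eq: expextationmodulatedenergy_connection} with $\epsilon=N^{-\beta}$ for $\beta$ small. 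The paper only sketches these norm bounds (doing the explicit computation for the Weierstrass-kernel variant, whose scale parameter is $\epsilon^2$ relative to yours, explaining the apparent $\epsilon^{-d/2}$ versus $\epsilon^{-d}$ discrepancy); your write-up supplies the same bookkeeping in the mollifier normalization.
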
 

Another approach to approximate the Coulomb kernel \(k\) is by utilizing the following approximation in dimension \(d \ge 3\), 
\begin{equation}\label{eq: regularized_fundamental_solution_laplace}
  \Phi^\epsilon(x) = \frac{1}{d(d-2)\lambda(B_1(0))} \int_{\R} \frac{h^\epsilon(y)}{|x-y|^{d-2}} \Id y ,
\end{equation}
where 
\begin{equation}\label{eq: def_weierstrass_kernel}
  h^\epsilon(y) = \frac{1}{(4\pi \epsilon)^{d/2}} \exp\bigg( -\frac{|y|^2}{4\epsilon}\bigg)
\end{equation}
is the Weierstrass kernel. 
Indeed, we note first that the square root \(\mathcal{F}^{\frac{1}{2}}(h^\epsilon)\) is well-defined since the Fourier transform of a Gaussian is still a Gaussian or in other words the normal distribution is infinitely divisible. More precisly, by~\cite[Theorem~5.2]{LiebElliottH2010A} we have
\begin{equation*}
  \mathcal{F}(h^\epsilon)(\xi) =  \exp\bigg( -4 \epsilon \pi^2 |\xi|^2 \bigg) .
\end{equation*} 

Hence, similar to the first approximation, we obtain Proposition~\ref{prop: Keller_Segel_e_e}. By using the Weierstrass kernel over an abstract mollification kernel we obtain explicit sharp convergence rates. 
For instance, using Plancherel theorem we obtain
\begin{align*}
  \norm{\nabla V^\epsilon}_{L^2(\R^d)}^2
  &= \frac{c_2}{c_{d-2} d(d-2)\lambda(B_1(0))} \norm{ \nabla \mathcal{F}^{-1}(|\xi|^{-1} \mathcal{F}^{\frac{1}{2}}(h^\epsilon)(\xi))(\cdot)}_{L^2(\R^d)}^2  \\
  &= \frac{c_2}{c_{d-2} d(d-2)\lambda(B_1(0))} \int_{\R^d} \sum\limits_{i=1}^d | \partial_{x_{i}} \mathcal{F}^{-1}(|\xi|^{-1} \mathcal{F}^{\frac{1}{2}}(h^\epsilon)(\xi))(x)|^2 \Id  x \\
  &= \frac{c_2}{c_{d-2} d(d-2)\lambda(B_1(0))} \int_{\R^d} \sum\limits_{i=1}^d  |\mathcal{F}^{-1}( 2\pi i \xi_{i} |\xi|^{-1} \mathcal{F}^{\frac{1}{2}}(h^\epsilon)(\xi))(x)|^2 \Id  x \\
  &=\frac{2\pi c_2}{c_{d-2} d(d-2)\lambda(B_1(0)) }  \sum\limits_{i=1}^d  \int_{\R^d}  |\xi_{i} \xi^{-1} \mathcal{F}^{\frac{1}{2}}(h^\epsilon)(\xi)|^2 \Id \xi \\
  &=\frac{2\pi c_2}{c_{d-2} d(d-2)\lambda(B_1(0)) }  \sum\limits_{i=1}^d  \int_{\R^d}  |\xi_{i} \xi|^{-1} \exp\bigg( -2 \epsilon \pi^2 |\xi|^2 \bigg)|^2 \Id \xi  \\
  &=\frac{2\pi c_2}{c_{d-2} d (d-2)\lambda(B_1(0)) \epsilon^{d/2}}  \sum\limits_{i=1}^d  \int_{\R^d}  \bigg|\xi_{i}\xi^{-1} \exp\bigg( -2 \pi^2 |\xi|^2 \bigg) \bigg|^2 \Id \xi  \\
  &\le \frac{2\pi  c_2}{c_{d-2}  (d-2)\lambda(B_1(0)) \epsilon^{d/2}}   \int_{\R^d} \exp\bigg( -4 \pi^2 |\xi|^2 \bigg)| \Id \xi  \\
  &= \frac{ c_2}{2^{d-1} \pi^{d/2-1} c_{d-2} (d-2)\lambda(B_1(0)) }  \epsilon^{-d/2}.
\end{align*}

\begin{remark}
  The above potential is attractive and therefore as far as we know regularization/approximation is necessary to obtain a solution of the underlying Lioville equation on \(\R^d\). Nevertheless, one can obtain tightness of the empirical measure in the super subcritical regime~\cite{FournierJourdain2017}. Our approach provides propagation of chaos of the intermediate system on the level of the relative entropy. Hence, it can be used as a tool to develop further results on the propagation of chaos for the Keller--Segel model without regularization.
\end{remark}

\subsection{Parabolic-Elliptic Keller--Segel System with Bessel potential}

Let us recall the parabolic-elliptic Keller--Segel model~\cite{KellerSegel1970} in \(\R^d\) given by 
\begin{align*}
  \begin{cases}
  \partial_t \rho_t &= \frac{\sigma^2}{2} \Delta \rho_t - \nabla \cdot ( \rho_t \nabla c_t)   \\
  c_t &=  \Delta c_t + \rho_t .
  \end{cases}
\end{align*}
Again solving the second equation by setting  
\begin{equation*}
  c_t = (I-\Delta)^{-1} \rho_t = G *\rho_t
\end{equation*}
with the \(L^1\) function \(G\) defined by 
\begin{equation}\label{eq: besse_potenial}
  G(x):= \mathcal{F}^{-1}[   (1+4\pi^2|\xi|^2)^{-1}  ](x) = \frac{1}{(4\pi)^{d/2} }
  \int\limits_0^\infty \exp\bigg( -t -\frac{|x|^2}{4t} \bigg) t^{-\frac{n}{2}} \Id t ,
\end{equation}
we can decouple the system and obtain an analogous result by using the following approximations of \(G\),
\begin{equation}\label{eq: bessel_potenial_approximation}
  G^\epsilon (x) = G*h^\epsilon(x),
\end{equation}
or 
\begin{equation} \label{eq: bessel_potenial_approximation2}
G^\epsilon (x) = G*J^\epsilon(x),
\end{equation}
where \(h^\epsilon\) is the Weierstrass kernel given by~\eqref{eq: def_weierstrass_kernel}. Setting 
\begin{equation*}
  V^\epsilon(x) =  \mathcal{F}^{-1}[   (1+4\pi^2|\xi|^2)^{-1/2} \mathcal{F}^{\frac{1}{2}}[h^\epsilon](\xi)  ](x),
\end{equation*}
it can be shown similar to the elliptic-parabolic Keller--Segel model that 
\begin{equation*}
  G^\epsilon = V^\epsilon*V^\epsilon.
\end{equation*}

Consequently, we obtain the analogous result. 

\begin{proposition}
  Let \(k^\epsilon = - \nabla G^\epsilon\) with \(G^\epsilon\) defined by~\eqref{eq: besse_potenial}~\eqref{eq: bessel_potenial_approximation} or~\eqref{eq: bessel_potenial_approximation2} and suppose the Assumptions~\ref{ass: convergence_in_probability} and~\ref{ass: law_of_large_numbers} hold. Moreover, for this \(k^\epsilon\) let \(\rho^{N,\epsilon}\) be the solution of the Lioville equation~\eqref{eq: regularized_Liouville_equation} and \(\rho^{\epsilon}\) be the solution to regularized Keller--Segel equation, i.e. to the PDE~\eqref{eq: regularized_aggregation_diffusion_pde}. Then, there exists a \(\beta > 0 \) depending on the dimension \(d\) such that for \(\epsilon= \epsilon(N) = N^{-\beta}\) there exists a \(\lambda >0\) such that
  \begin{equation*}
     \sup\limits_{t \in [0,T]} {\mathcal H}_N( \rho^{N,\epsilon(N)}_t \;  | \; \rho^{\otimes N, \epsilon(N)}_t) + \sup\limits_{t \in [0,T]}   | {\mathcal K}_{N}  (\rho_t^{N,\epsilon}\vert \rho_t^{\otimes N, \epsilon})| \le C N^{-\lambda}.
  \end{equation*}
\end{proposition}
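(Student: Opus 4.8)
The plan is to transcribe the argument of Proposition~\ref{prop: Keller_Segel_e_e} with the Newtonian kernel $\Phi$ replaced by the Bessel kernel $G$, so that the statement again reduces to Lemma~\ref{lemma: relative_entropy_L_2_estimate} and Theorem~\ref{theorem: emp_measure_l2_estimate}; the only genuinely new work is the Fourier-analytic bookkeeping for $G$ and its approximations.

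\emph{Step 1: the convolution factorization.} First I would exhibit $W^\epsilon, V^\epsilon$ with $k^\epsilon = -\nabla G^\epsilon = (W^\epsilon*V^\epsilon)_x$, the sign and the gradient being harmless since $\nabla$ may be moved inside the convolution. For the approximation~\eqref{eq: bessel_potenial_approximation} the Weierstrass kernel is infinitely divisible, $\mathcal{F}[h^\epsilon](\xi) = \exp(-4\epsilon\pi^2|\xi|^2) \ge 0$, so $\mathcal{F}^{\frac{1}{2}}[h^\epsilon](\xi) = \exp(-2\epsilon\pi^2|\xi|^2)$ is a genuine Schwartz function; with $V^\epsilon = \mathcal{F}^{-1}[(1+4\pi^2|\xi|^2)^{-1/2}\mathcal{F}^{\frac{1}{2}}[h^\epsilon](\xi)]$ one checks on $L^2(\R^d)$ that $\mathcal{F}[V^\epsilon]^2 = (1+4\pi^2|\xi|^2)^{-1}\mathcal{F}[h^\epsilon] = \mathcal{F}[G^\epsilon]$, hence $G^\epsilon = V^\epsilon*V^\epsilon$ and one takes the symmetric choice $W^\epsilon = V^\epsilon$. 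For the mollifier approximation~\eqref{eq: bessel_potenial_approximation2}, $\mathcal{F}[J^\epsilon]$ need not be nonnegative, so I would either arrange $J$ to be a convolution square of a fixed mollifier (so that $J^\epsilon$ factors) and proceed as above, or use the non-symmetric splitting $W^\epsilon = G*J^\epsilon$, $V^\epsilon = J^\epsilon$ exactly as in Proposition~\ref{prop: Keller_Segel2d_e_e}, with $W^\epsilon \in L^2(\R^d)$ guaranteed by $G \in L^1(\R^d)$ and Young's inequality for convolutions. For the bare symbol~\eqref{eq: besse_potenial} one writes $G = V*V$ with $\mathcal{F}[V] = (1+4\pi^2|\xi|^2)^{-1/2}$, i.e.\ $V$ the Bessel potential of order $1$; this lies in $L^2(\R^d) \cap H^2(\R^d)$ only for the dimensions $d$ admitted by the Sobolev/Hardy--Littlewood--Sobolev embedding~\cite[Chapter~5, Theorem~1]{Stein1970}, which is the source of the $d$-dependence of $\beta$.

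\emph{Step 2: (strong) admissibility.} Next I would verify Definition~\ref{def: admissible}, namely $\norm{W^\epsilon}_{H^2(\R^d)} \le C\epsilon^{-a_W}$ and $\norm{V^\epsilon}_{H^2(\R^d)} \le C\epsilon^{-a_V}$ with $a_W = a_W(d)$, $a_V = a_V(d)$. By Plancherel this reduces to bounding
\[
  \int_{\R^d} (1+4\pi^2|\xi|^2)^2\,(1+4\pi^2|\xi|^2)^{-1}\exp(-4\epsilon\pi^2|\xi|^2)\Id\xi ,
\]
together with the analogues involving $\mathcal{F}[V^\epsilon_x]$ and $\mathcal{F}[V^\epsilon_{xx}]$; the change of variables $\xi\mapsto\xi/\sqrt\epsilon$ shows these scale like $\epsilon^{-(d/2+1)}$, the low-frequency part being harmless because the Bessel symbol is bounded near the origin (in contrast to the Coulomb case). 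This yields $a_V = a_W = \tfrac{d}{4} + \tfrac12$ (and analogous explicit exponents for the $J^\epsilon$-based approximation), as well as $\norm{k^\epsilon}_{L^\infty(\R^d)} + \norm{k^\epsilon_x}_{L^\infty(\R^d)} \le C\epsilon^{-a_k(d)}$ via Young's inequality and the $H^2$-bounds just obtained; all exponents are dimension dependent.

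\emph{Step 3: assembling the estimate and the main obstacle.} With admissibility in hand the conclusion is mechanical. Lemma~\ref{lemma: relative_entropy_L_2_estimate} bounds ${\mathcal H}_N(\rho_t^{N,\epsilon}\vert\rho_t^{\otimes N,\epsilon})$ by $\sigma^{-2}\norm{W^\epsilon}_{L^2(\R^d)}^2$ times $\E\int_0^t\norm{V^\epsilon*(\mu_s^{N,\epsilon}-\rho_s^\epsilon)}_{L^2(\R^d)}^2\Id s$, and Theorem~\ref{theorem: emp_measure_l2_estimate} --- applied to $V^\epsilon$, and also to $\hat{W}^\epsilon$ for the modulated energy through the identity~\eqref{eq: expextationmodulatedenergy_connection} and Young's inequality --- controls this $L^2$-quantity by a sum of terms $C\epsilon^{-c(d)}N^{-\theta}$ with $\theta\in\{1,\,2\alpha,\,\alpha+\tfrac12,\,\gamma\}$. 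Substituting $\epsilon = N^{-\beta}$ and using $\alpha\in(\tfrac14,\tfrac12)$ as in Remark~\ref{remark: alpha_restriction}, I would choose $\beta = \beta(d) > 0$ small enough that each exponent $\theta-\beta c(d)$ exceeds $\tfrac12$, then $\gamma$ large enough that the $N^{-\gamma+\beta c(d)}$ term is negligible; this gives $\sup_{t\in[0,T]}({\mathcal H}_N(\rho_t^{N,\epsilon}\vert\rho_t^{\otimes N,\epsilon}) + |{\mathcal K}_N(\rho_t^{N,\epsilon}\vert\rho_t^{\otimes N,\epsilon})|) \le CN^{-\lambda}$ for some $\lambda = \lambda(d) > 0$. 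The main difficulty I anticipate is precisely the Fourier-analytic estimate of Step~2: pinning down the exponents $a_V(d), a_W(d), a_k(d)$, in particular the exact admissible range of $d$ for the bare kernel~\eqref{eq: besse_potenial}, and checking that the negative $\epsilon$-powers borrowed from $h^\epsilon$ (or $J^\epsilon$) do not swallow the $N^{-\theta}$ gains --- which is exactly what forces $\beta$ to shrink as $d$ grows. Everything else is a routine repetition of the Newtonian-kernel computation.
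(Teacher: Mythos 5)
Your proposal is correct and follows essentially the same route as the paper, which itself only sketches this case by exhibiting the factorization $G^\epsilon = V^\epsilon * V^\epsilon$ with $V^\epsilon = \mathcal{F}^{-1}[(1+4\pi^2|\xi|^2)^{-1/2}\mathcal{F}^{1/2}[h^\epsilon]]$ and then invoking the elliptic--parabolic Keller--Segel argument (Lemma~\ref{lemma: relative_entropy_L_2_estimate}, Theorem~\ref{theorem: emp_measure_l2_estimate}, Corollary~\ref{cor: general_case_estimate}). Your additional observation that the Bessel symbol is bounded near the origin (so no low-frequency restriction on $d$, unlike the Coulomb case) is consistent with the paper, and note that the three cited equations should be read as $G$ given by~\eqref{eq: besse_potenial} regularized via~\eqref{eq: bessel_potenial_approximation} or~\eqref{eq: bessel_potenial_approximation2}, so the unregularized kernel you flag as problematic is not actually among the cases.
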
 

\begin{remark}
  By going through the proof of Theorem~\ref{theorem: emp_measure_l2_estimate} one can obtain a convergence rate and precise condition for \(\beta\). We also assumed the convergence probability, since we can not reference a concrete result. Nevertheless, we think that this assumption should be true for good enough initial data.
\end{remark}

\bibliography{quellen}
\bibliographystyle{amsalpha}

\end{document}